\newcommand{\contraction}{\vrule height 0pt depth 0.4pt width 3pt
  \vrule height 7pt depth 0.4pt \kern 3pt}
\newcommand{\Bg}{\mathcal{B}\mathfrak{g}}
\title{The Geometry and Dynamics \\ of Interacting Rigid Bodies and Point
  Vortices}
\author{Joris Vankerschaver$^{a,b}$,
Eva Kanso$^c$,
Jerrold E. Marsden$^a$ \\ \mbox{} \\
{\small $^a$ Control and Dynamical Systems} \\
{\small California Institute of Technology MC 107-81,} 
{\small Pasadena, CA 91125} \\ %\mbox{} \\
{\small $^b$ Department of Mathematical Physics and Astronomy} \\
{\small Ghent University, Krijgslaan 281, B-9000 Ghent, Belgium} \\
%\mbox{} \\
{\small $^c$ Aerospace and Mechanical Engineering} \\
{\small University of Southern California,}
{\small Los Angeles, CA 90089} \\ \mbox{} \\
 {\small E-mail: jv@caltech.edu, kanso@usc.edu, jmarsden@caltech.edu}
}
\date{This version: \today}
\begin{document}
\maketitle

\begin{abstract}
  We derive the equations of motion for a planar rigid body of circular shape moving in a 2D perfect fluid with point vortices using symplectic reduction by
  stages.  After formulating the theory as a mechanical system on a
  configuration space which is the product of a space of embeddings
  and the special Euclidian group in two dimensions, we divide out by
  the particle relabelling symmetry and then by the residual rotational and
  translational symmetry.  The result of the first stage reduction is that the
  system is described by a non-standard magnetic symplectic form encoding the
  effects of the fluid, while at the second stage, a careful analysis
  of the momentum map shows the existence of two equivalent Poisson
  structures for this problem.  For the solid-fluid system, we hence
  recover the ad hoc Poisson structures calculated by Shashikanth,
  Marsden, Burdick and Kelly on the one hand, and Borisov, Mamaev, and Ramodanov on the other hand.  As
  a side result, we obtain a convenient expression for the symplectic
  leaves of the reduced system and we shed further light on the interplay between curvatures and cocycles in the description of the dynamics.
  \end{abstract}

\tableofcontents

\section{Introduction}

In this paper, we use symplectic reduction to derive the equations of
motion for a rigid body moving in a two-dimensional fluid with point
vortices.  Despite the fact that this setup is easily described, it
may come as a surprise that these equations were derived (using
straightforward calculations) only recently by \cite{cylvortices} and
\cite{BoMaRa2003}.  In order to see why this is so, despite the long
history of this kind of problems, and in order to set the stage for
our approach, let us trace some of the history of fluid-rigid body
interaction problems.

\paragraph{Rigid Bodies in Potential Flow.}

The equations of motion for a rigid body of mass $m_b$ and inertia
tensor $\mathbb{I}_b$ were first described by \cite{Ki1877} and are given by 
\begin{equation} \label{kirchhoffeq}
  \dot{\mathbf{L}} = \mathbf{A} \times \mathbf{V} \quad \text{and} \quad  \dot{\mathbf{A}} =
  \mathbf{A} \times \mathbf{\Omega},
\end{equation}
where $\mathbf{V}$ and $\mathbf{\Omega}$ are the linear and angular velocity
of the body, while $\mathbf{L}$ and $\mathbf{A}$ are the linear and angular
momentum, related by $\mathbf{L} = m \mathbf{V}$ and $\mathbf{A} = \mathbb{I}
\mathbf{\Omega}$.  Here, $m = m_b + m_f$ is the total mass of the rigid
body, consisting of the mass $m_b$ and the \emph{virtual mass} $m_f$
induced by the fluid.  Similarly, $\mathbb{I} = \mathbb{I}_b + \mathbb{I}_f$, where
$\mathbb{I}_f$ is the virtual inertia tensor due to the fluid.

The main difference between these equations and the Euler equations
governing the motion of a rigid body in vacuum is the appearance of
the non-zero term $\mathbf{A} \times \mathbf{V}$ on the right-hand side of
the equation for $\mathbf{L}$.  In other words, the center of mass no
longer describes a uniform straight trajectory and is a non-trivial
degree of freedom.  From a geometric point of view, the motion of a
rigid body in a potential flow can therefore be considered as a curve
on the special Euclidian group $\operatorname{SE}(3)$ consisting of all translations
and rotations in $\mathbb{R}^3$, where the former describe the orientation
of the body while the latter encode the location of the center of
mass.  

The kinetic energy for the rigid body in a potential flow is a
quadratic function and hence determines a metric on $\operatorname{SE}(3)$.  The
physical motions of the rigid body are geodesics with respect to this
metric.  By noting that the dynamics is invariant under the action of
$\operatorname{SE}(3)$ on itself, the system can be reduced from the phase space
$T^{\ast}  \operatorname{SE}(3)$ down to one on the dual Lie algebra $\mathfrak{se}(3)^{\ast} $:
in this way, one derives the Kirchhoff equations
(\ref{kirchhoffeq}).  
The situation is similar for a planar rigid body moving in a two-dimensional flow:  the motion is then a geodesic flow on $\operatorname{SE}(2)$, the Euclidian group of the plane, and reduces to a dynamical system on $\mathfrak {se}(2)^{\ast}$.  The dynamics of the reduced system is still described by the Kirchhoff equations, but additional simplifications occur: since both $\mathbf{A}$ and $\mathbf{\Omega}$ are directed along the $z$-axis, the right-hand side of the equation for $\mathbf{A}$ is zero.   Throughout this paper, we will deal with the case of planar bodies and two-dimensional flows only.

The procedure of reducing a mechanical system on a Lie group $G$ whose
dynamics are invariant under the action of $G$ on itself is
known as \textbf{\emph{Lie-Poisson reduction}}.  It was pointed out by
\cite{Ar66} that both the Euler equations for a rigid body as well as
the Euler equations for a perfect fluid can be derived using this
approach.  For more about the history of these and related reduction
procedures, we refer to \cite{MarsdenRatiu}.  The geometric outlook on
the Kirchhoff equations, developed by \cite{Leonard1997}, turned out
to be crucial in the study stability results for bottom-heavy
underwater vehicles; see also \cite{PaRoWu2008}.

\paragraph{Point Vortices.}

Another development in fluid dynamics to which the name of Kirchhoff
is associated, is the motion of point vortices in an inviscid flow.  A
\textbf{\emph{point vortex}} is defined as a singularity in the vorticity
field of a two-dimensional flow: $\omega = \Gamma \delta(\mathbf{x} -
\mathbf{x}_0(t))$, where the constant $\Gamma$ is referred to as the
\textbf{\emph{strength}} of the point vortex.  By plugging a superposition of
$N$ point vortices into the Euler equations for a perfect fluid, one
obtains the following set of ODEs for the evolution of the vortex
locations $\mathbf{x}_i$, $i = 1, \ldots, N$:
\begin{equation} \label{vortexeq}
\Gamma_k \frac{d \mathbf{x}_k}{dt} = J
\frac{\partial H}{\partial \mathbf{x}_k}, 
\end{equation}
where $H = - W_G(\mathbf{x}_1, \ldots, \mathbf{x}_N)$ and $W_G$ is the
so-called \textbf{\emph{Kirchhoff-Routh function}}, which is derived using
the Green's function for the Laplacian with appropriate boundary
conditions.  We will return to the explicit form for $W_G$ later.  For
point vortices moving in an unbounded domain, $W_G$ was derived by
Routh and Kirchhoff, whereas \cite{Lin43a} studied the case of vortices in a
bounded container with fixed boundaries.

It is useful to recall here how the point vortex system is related to the dynamics of an inviscid fluid.  In order to do so, we recall the deep insight of 
\cite{Ar66}, who recognized that the motion of a perfect
fluid in a container $\mathcal{F}$ is a geodesic on the group
$\mathrm{Diff}_{\mathrm{vol}}(\mathcal{F})$ of volume-preserving diffeomorphisms of $\mathcal{F}$, 
in much the same way as the motion of a rigid body is a geodesic on the rotation group $\operatorname{SO}(3)$.
The group $\mathrm{Diff}_{\mathrm{vol}}(\mathcal{F})$ acts on itself from the right and leaves the fluid kinetic
energy invariant, a result known as \textbf{\emph{particle relabelling
  symmetry}}.  Hence, by Lie-Poisson reduction, the system can be
reduced to one on the dual $\mathfrak{X}^{\ast} _{\mathrm{vol}}(\mathcal{F})$ of the Lie algebra
of $\mathrm{Diff}_{\mathrm{vol}}(\mathcal{F})$, and the resulting equations are precisely
Euler's equations.

Moreover, any group acts on its dual Lie algebra
through the co-adjoint action, and a trajectory of the reduced
mechanical system starting on one particular orbit of that action is constrained to 
remain on that particular orbit.  \cite{mw1983} showed that for a
perfect fluid, the co-adjoint orbits are labelled by vorticity and,
when specified to the co-adjoint orbit corresponding to the vorticity
of $N$ point vortices, Euler's equations become precisely the vortex
equations (\ref{vortexeq}).  

For a detailed overview of the geometric approach to fluid dynamics, we refer to
\cite{ArnoldKhesin}.  The dynamics of point vortices is treated by different authors, most notable \cite{Sa1992}, \cite{Ne01} and \cite{Aref2007}.  

\paragraph{The Rigid Body Interacting with Point Vortices.}

Given the interest in both rigid bodies and point vortices, it is natural to study the
dynamics of a planar rigid body interacting with $N$ point vortices.
Surprisingly, it wasn't until the recent work of \cite{cylvortices} (SMBK)
and \cite{BoMaRa2003} (BMR) that the equations of motion for this
dynamical system were established.    Both groups proceeded through an ad-hoc calculation to derive the equations of motion, and showed that the resulting equations are in fact Hamiltonian.  However, both sets of equations are different at first sight.

The SMBK equations are formally identical to the Kirchhoff equations
(\ref{kirchhoffeq}) together with the point vortex equations
(\ref{vortexeq}), but the definitions of the momenta $\mathbf{L}, \mathbf{A}$
and the Hamiltonian $H$ are modified to include the effect of the rigid
body (through the ambient fluid) on the point vortices, and vice
versa.  The configuration space is $\mathfrak{se}(2)^{\ast}  \times \mathbb{R}^{2N}$
and the Poisson structure is the sum of the Poisson structures on the
individual factors.
From the BMR point of view, the dynamic variables are the velocity
$\mathbf{V}$ and angular velocity $\mathbf{A}$, together with the locations
of the vortices.  The Hamiltonian is simply the sum of the kinetic
energies for both subsystems, and to account for the interaction between
the point vortices and the rigid body, BMR introduce a non-standard
Poisson structure involving the stream functions of the fluid.

Somewhat miraculously, the equations of motion obtained by SMBK and
BMR turn out to be equivalent: \cite{Sh2005} establishes the existence of a Poisson map
taking the canonical SMBK Poisson structure
into the BMR Poisson structure.  
However, a number of questions therefore remain.  Most importantly, it is not
obvious why one mechanical problem should be governed by two Poisson
structures, which are at first sight very different but turn out to be related by a certain Poisson map.  Moreover, it is entirely non-obvious why this system is Hamiltonian in the first place: in the work of SMBK and BMR, the Hamiltonian structure is derived only afterwards by direct inspection.    
%Roughly speaking, we combine
%the lessons learned from the point vortex system and from the rigid body moving in potential flow to tackle the combined system.

\paragraph{Main Contributions and Outline of this Paper.}  
We shed more light on the issues addressed above by uncovering the 
geometric
structures that govern this problem.  
We begin
this paper by giving an overview of rigid body dynamics and aspects of
fluid mechanics in section~\ref{sec:fluidsolid}.  This material is
mostly well-known and serves to set the tone for the rest of the
paper.  In section~\ref{sec:neumann} we then introduce the so-called
Neumann connection, giving the response of the fluid to an
infinitesimal motion of the rigid body.  This connection has been
described before, but a detailed overview of its properties seems to be
lacking.  In particular, we derive an expression for the curvature of
this connection when the fluid space is an arbitrary Riemannian
manifold, generalizing a result of
\cite{MontgomeryThesis}.  

The remainder of the paper is then devoted to using
reduction theory to obtain the equations of motion for the fluid-solid system
in a systematic way.  We will derive the equations of motion
for this system by reformulating it first as a geodesic flow on the
Cartesian product of the group $\operatorname{SE}(2) $ of translations and rotations
in 2D, and a space of embeddings describing the fluid configurations.  Two symmetry
groups act on this space: the particle relabelling symmetry group
$\mathrm{Diff}_{\mathrm{vol}}(\mathcal{F})$, and the group $\operatorname{SE}(2) $ itself.  Dividing out by
the combined action of these symmetries is therefore an example of
\textbf{\emph{reduction by stages}}.

\begin{figure}[h!]
\begin{center}
\includegraphics[scale=0.6,angle=0]{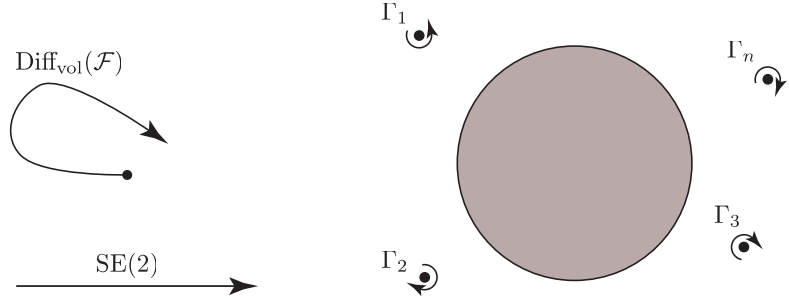}
\end{center}
\caption{\footnotesize
The two symmetry groups in the problem of a cylinder interacting with $N$ point vortices are the particle relabelling group $\mathrm{Diff}_{\mathrm{vol}}(\mathcal{F})$ and the Euclidian group $\operatorname{SE}(2)$.}
\label{fig:diss_energy}
\end{figure}

After symplectic reduction with respect to $\mathrm{Diff}_{\mathrm{vol}}(\mathcal{F})$ in
section~\ref{sec:diff}, we obtain a dynamical system on the product space
$T^{\ast}  \operatorname{SE}(2)  \times \mathbb{R}^{2N}$, where the first factor is the phase
space of the rigid body, while the second factor describes the
locations of the point vortices.  The dynamics is governed by a
\textbf{\emph{magnetic symplectic form}}: it is the sum of the canonical
symplectic forms on both factors, together with a non-canonical interaction term.
From a physical point of view, the interaction term embodies the
interaction between the point vortices and the rigid body through the
ambient fluid.  Mathematically speaking, the map associating to each rigid body
motion the corresponding motion of the fluid can be viewed as a connection, and the
magnetic term is closely related to its curvature.

Note that it is essential here to do \emph{symplectic} reduction with respect to the particle relabeling symmetry rather than Poisson reduction, even though the latter might be conceptually simpler.   Recall that symplectic reduction reduces the system to a co-adjoint orbit; as we shall see below, in the case of fluids interacting with solids these co-adjoint orbits are precisely labelled by the vorticity of the fluid.   Symplectic reduction --- in particular the selection of one particular level set of the momentum map --- therefore amounts to imposing a specific form for the vorticity field.  In our case, this is where we introduce the assumption that the vorticity is concentrated in $N$ point vortices. In the case of Poisson reduction, we would have obtained the equations of motion for a rigid body interacting with an arbitrary vorticity field.

After factoring out the particle relabeling symmetry, the resulting dynamical system is invariant under translations and
rotations in the plane and can then be reduced with respect to the group $\operatorname{SE}(2) $.
This is the subject of section~\ref{sec:redse}.  Physically, this corresponds to rewriting the equations of motion obtained after the first reduction in body coordinates.  However, because of the presence of the   
 magnetic term
in the symplectic form and the fact that $\operatorname{SE}(2) $ acts diagonally on
$T^{\ast}  \operatorname{SE}(2)  \times \mathbb{R}^{2N}$, this is not a straightforward task.  

First, we derive the reduced Poisson structure on 
the reduced space $\mathfrak{se}(2)^{\ast}  \times \mathbb{R}^{2N}$.   Because of the magnetic contributions to the symplectic form, 
the reduced Poisson structure is not just the
sum of the Poisson structures on the individual factors, but includes certain non-canonical contributions as well.   We then show that the momentum map for the $\operatorname{SE}(2)$-symmetry naturally defines a Poisson map taking this Poisson structure into the product
Poisson structure, possibly with a cocycle if the momentum map is not
equivariant (this happens when the total strength of the point vortices is nonzero). 
We do the computations for a general product $T^{\ast}  H
\times P$ first, where $H$ is a Lie group, $P$ is a symplectic
manifold, and the product is equipped with a magnetic symplectic form.
In this way, we generalize the  ``coupling to a Lie group'' scenario
(see \cite{KrMa86}) to the case where magnetic terms are present.

In this way, the results of SMBK and BMR are put on a firm geometric footing: the BMR Poisson structure is the one obtained through reduction and
involves the interaction terms, while the SMBK Poisson structure is
simply the product Poisson structure.  The Poisson map 
induced by the
$\operatorname{SE}(2) $-momentum map described above turns out to be precisely 
Shashikanth's Poisson map.  
As a consequence, we also obtain an explicit  prescription for the
symplectic leaves of this system.

\paragraph{Relation with Other Approaches.}

Our method consists of rederiving the SMBK and BMR equations by reformulating the motion of a rigid body in a fluid as a geodesic problem on the space $Q$.  By imposing the assumption that the vorticity is concentrated in $N$ point vortices, and dividing out the symmetry, we obtain first of all the BMR equations, and secondly (after doing a momentum shift) the SMBK equations.  This procedure is worked out in the body of the paper --- here we would like to reflect on the similarities with other dynamical systems.

Recall that the dynamics of a particle of charge $e$ in a magnetic field $\mathbf{B} = \nabla \times \mathbf{A}$ can be described in two ways.  One is by using canonical variables $(\mathbf{q}, \mathbf{p})$ and the Hamiltonian 
\begin{equation} \label{maghamiltonian}
  H(\mathbf{q}, \mathbf{p}) = \frac{1}{2m} \left\Vert \mathbf{p} - e \mathbf{A}\right\Vert^2,
\end{equation}
while for the other one we use the kinetic energy Hamiltonian $H_{\mathrm{kin}} = \left\Vert \mathbf{p} \right\Vert^2/2m$ but now we modify the symplectic form to be 
\begin{equation} \label{magsymplectic}
\Omega_{\mathbf{B}} = \Omega_{\mathrm{can}} + e\mathbf{B}.  
\end{equation}
A simple calculation shows that both systems ultimately give rise to the familiar Lorentz equations.
From a mathematical point of view, this can be seen by noting that the \textbf{\emph{momentum shift map}}
$\mathcal{S} : \mathbf{p} \mapsto \mathbf{p} - e \mathbf{A}$
maps the dynamics of the former system into that of the latter:
\[
  \mathcal{S}^\ast \Omega_{\mathbf{B}} = \Omega_{\mathrm{can}}
  \quad \text{and} \quad \mathcal{S}^\ast H_{\mathrm{kin}} = H.
\]
In other words, both formulations are related by a symplectic isomorphism, thus making them equivalent.

An over-arching way of looking at the dynamics of a charged particle in a magnetic field is as a geodesic problem (with respect to a certain metric) on a higher-dimensional space: this is part of the famed \textbf{\emph{Kaluza-Klein approach}}.  In this case, spacetime is replaced by the product manifold $\mathbb{R}^4 \times \operatorname{U}(1)$ and charged particles trace out geodesics on this augmented space.  The standard, four-dimensional formulation of the dynamics can then be obtained by dividing out by the internal $\operatorname{U}(1)$-symmetry, resulting in the familiar Lorentz equations on $\mathbb{R}^4$.  More information about these constructions can be found in \cite{Sternberg1977}, \cite{Weinstein1977} and \cite{MarsdenRatiu}.
 
 Our approach to the fluid-structure problem is similar but more involved because of the presence of two different symmetry groups.  However, the underlying philosophy is the same:  by reformulating the motion of a rigid body and the fluid as a geodesic problem on an infinite-dimensional manifold, we follow the philosophy of Kaluza-Klein 
 of trading in the complexities in the equations of motion for an increase in the number of dimensions of the configuration space.  Then, by dividing out the $\mathrm{Diff}_{\mathrm{vol}}$-symmetry, we obtain a reduced dynamical system governed by a magnetic symplectic form (the analogue of (\ref{magsymplectic})), which is mapped after a suitable momentum shift $\mathcal{S}$ into an equivalent dynamical system governed by the canonical symplectic form but with a modified Hamiltonian.   After reducing by the residual $\operatorname{SE}(2)$-symmetry, the former gives rise to the BMR bracket, while the latter is nothing but the SMBK system.
 
\paragraph{A Note on Integrability.}  The case of a circular cylinder is distinguished because of the existence of an additional conservation law, associated with the symmetry that rotates the cylinder around its axis.  When the circular cylinder interacts with one external point vortex, a simple count of dimensions and first integrals suggests that this problem is integrable, a fact first proved by \cite{BoMa03}.  Indeed, for a single vortex the phase space is a symplectic leaf of $\mathfrak{se}^\ast(2) \times \mathbb{R}^2$, which is generically four-dimensional.  On the other hand, for the circular cylinder there are two conservation laws: the total energy and the material symmetry associated with rotations around the axis of the cylinder, which hints at Liouville integrability.   

In the case of an ellipsoidal cylinder with nonzero eccentricity, \cite{BoMaRa2007} gather numerical evidence to show that the interaction with one vortex is chaotic.  This is to be contrasted with the motion of point vortices in an unbounded domain (see \cite{Ne01}), which is integrable for three vortices or less.

\paragraph{Acknowledgements.}  It is a pleasure to thank Richard
Montgomery, Paul Newton, Tudor Ratiu and Banavara Shashikanth, as well
as Frans Cantrijn, Scott Kelly, Bavo Langerock, Jim Radford, and
Clancy Rowley, for useful suggestions and interesting discussions.

J. Vankerschaver is a Postdoctoral Fellow from the Research Foundation
-- Flanders (FWO-Vlaanderen) and a Fulbright Research Scholar, and
wishes to thank both agencies for their support.  Additional financial
support from the Fonds Professor Wuytack is gratefully acknowledged.
E. Kanso's work is partially supported by NSF through the award CMMI
06-44925.  J. E. Marsden is partially supported by NSF Grant
DMS-0505711.

\section{The Fluid-Solid Problem} \label{sec:fluidsolid}

This section is subdivided into two parts.  In the first, and longest,
part we describe the general setup for a planar rigid body interacting
with a 2D flow.  The second part is then devoted to discussing a
number of simplifying assumptions that will make the subsequent
developments clearer.  By separating these assumptions from the main
problem setting, we hope to convince the reader that the method
outlined in this paper does not depend on any specific assumptions on the rigid
body or the fluid, and can be generalized to more complex problems.  At the end of this paper, we discuss how these assumptions can be relaxed.

\subsection{General Geometric Setting}

\paragraph{Kinematics of a Rigid Body.}

Throughout this paper, we consider the motion of a \textbf{\emph{planar rigid
body}} interacting with a \textbf{\emph{2D flow}}.  We introduce an inertial frame
$\mathbf{e}_{1, 2, 3}$, where $\mathbf{e}_{1,2}$ span the plane of motion
and $\mathbf{e}_3$ is perpendicular to it.  The configuration of the
rigid body is then described by a rotation with angle $\beta$ around
$\mathbf{e}_3$ and a vector $\mathbf{x}_0 = x_0 \mathbf{e}_1 + y_0
\mathbf{e}_2$ describing the location of a fixed point of the body, which
we take to be the center of mass.

\begin{figure}[h!]
\begin{center}
\includegraphics[scale=0.6,angle=0]{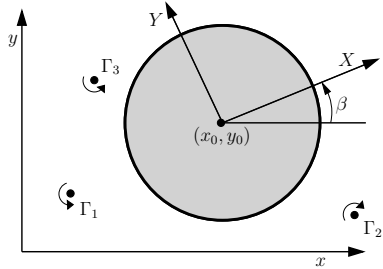}
\end{center}
\caption{\footnotesize
The configuration of the rigid body and the point vortices.}
\label{fig:axes}
\end{figure}

The orientation and position $(\beta; x_0, y_0)$ determine an
element $g$ of the Euclidian group $\operatorname{SE}(2) $ given by
\begin{equation} \label{eucgroup}
  g = \begin{pmatrix}
    R & \mathbf{x}_0 \\
    0 & 1
  \end{pmatrix}, 
  \quad \text{where} \,  
  R = \begin{pmatrix}
    \cos \beta & - \sin \beta \\
    \sin \beta & \cos \beta
  \end{pmatrix}.
\end{equation}

Written in this way, the group composition and inversion in $\operatorname{SE}(2) $
are given by matrix multiplication and inversion.  The Lie algebra of
$\operatorname{SE}(2) $ is denoted by $\mathfrak{se}(2)$ and essentially consists of
infinitesimal rotations and translations.  Its elements are matrices
of the form
\[
  \begin{pmatrix}
    0 & -\Omega & V_x \\
    \Omega & 0 & V_y \\
    0 & 0 & 0
  \end{pmatrix}.
\]
It follows that $\mathfrak{se}(2)$ can be identified with $\mathbb{R}^3$ by
mapping such a matrix to the triple $(\Omega, V_x, V_y)$.  We define a basis 
of $\mathfrak{se}(2)$ given by 
\begin{equation} \label{sebasis}
  \mathbf{e}_\Omega = \begin{pmatrix} 0 & -1 & 0 \\
    1 & 0 & 0 \\
    0 & 0 & 0 
    \end{pmatrix}, \quad
\mathbf{e}_x = \begin{pmatrix} 0 & 0 & 1 \\
    0 & 0 & 0 \\
    0 & 0 & 0 
    \end{pmatrix}, \quad 
\mathbf{e}_y = \begin{pmatrix} 0 & 0 & 0 \\
    0 & 0 & 1 \\
    0 & 0 & 0 
    \end{pmatrix}. 
\end{equation}

For future reference, we also introduce a moving frame fixed to the
rigid body, denoted by $\mathbf{b}_{1, 2, 3}$.  The transformation from
body to inertial frame is given by $\mathbf{x} = R \mathbf{X} +
\mathbf{x}_0$, where $\mathbf{x} = x \mathbf{e}_1 + y \mathbf{e}_2$ and
$\mathbf{X} = X \mathbf{b}_1 + Y \mathbf{b}_2$.

The \textbf{\emph{angular}} and \textbf{\emph{translational velocities}} of the
rigid body relative to the inertial frame are defined as
\begin{equation} \label{spatialvelocity}
\boldsymbol{\omega} = \dot{\beta} \mathbf{e}_3 \quad \text{and} \quad  \mathbf{v} =
\dot{x}_\circ \mathbf{e}_1 + \dot{y}_\circ \mathbf{e}_2,
\end{equation}
where dots denote derivatives with respect to time.  These
quantities can be expressed in the body frame: the body angular
velocity $\boldsymbol{\Omega}$ and the body translational velocity
$\mathbf{V}$ are related to the corresponding inertial quantities by 
\begin{equation} \label{velocities}
  \boldsymbol{\Omega} = \boldsymbol{\omega} \quad \text{and} \quad  
  \mathbf{V} = R^T \mathbf{v}.
\end{equation}
For the case of a planar rigid body, the angular velocity is oriented
along the axis perpendicular to the plane and so determines a scalar
quantity: $\boldsymbol{\omega} = \omega \mathbf{e}_3$ and
$\boldsymbol{\Omega} = \Omega \mathbf{b}_3$.  From a group-theoretic
point of view, if the motion of a rigid body is given by a curve $t
\mapsto g(t)$ in $\operatorname{SE}(2) $, then we may define an element $\xi(t) \in
\mathfrak{se}(2)$ by putting
\[
\xi(t) = \frac{d}{d\epsilon} g(t)^{-1}g(t+\epsilon) \Big|_{\epsilon =
  0},
\]
and it can easily be checked that $\xi$ coincides with the body
angular and translational velocities $(\Omega, \mathbf{V})$.  

The \textbf{\emph{kinetic energy}} of the rigid body is given by the following
expression: 
\begin{equation} \label{bodyenergy}
   T_{\mathrm{body}}(\Omega, \mathbf{V}) = \frac{\mathbb{I}}{2} \Omega^2 + \frac{m}{2} \mathbf{V}^2
\end{equation}
where $\mathbb{I}$ is the moment of inertia of the body and $m$ is its mass.
The kinetic energy defines an inner product on $\mathfrak{se}(2)$, given by 
\begin{equation} \label{kinenergy}
  \left\langle \! \left\langle(\Omega_1, \mathbf{V}_1), (\Omega_2, \mathbf{V}_2)\right\rangle \!
  \right\rangle_{\mathfrak{se}(2)} = 
    \begin{pmatrix} 
      \Omega_1 & \mathbf{V}_1
    \end{pmatrix}
      \mathbb{M}_m
    \begin{pmatrix} 
      \Omega_2 \\ \mathbf{V}_2
    \end{pmatrix}, 
\end{equation}
for $(\Omega_1, \mathbf{V}_1)$ and $(\Omega_2, \mathbf{V}_2)$ in
$\mathfrak{se}(2)$, and where $\mathbb{M}_m$ is given by 
\begin{equation} \label{Mm}
\mathbb{M}_m = 
\begin{pmatrix}
      \mathbb{I} & 0 \\
      0 & m \mathbf{I} 
    \end{pmatrix}.
\end{equation}
Here, $\mathbf{I}$ is the $2$-by-$2$ identity matrix.  By left
extension, this inner product induces a left-invariant metric on the
whole of $\operatorname{SE}(2) $:
\begin{equation} \label{metric:se}
   \left\langle \! \left\langle (g, \dot{g}_1), (g, \dot{g}_2)\right\rangle \!
  \right\rangle_{\operatorname{SE}(2) } :=
     \left\langle \! \left\langle g^{-1}\dot{g}_1, g^{-1}\dot{g}_2\right\rangle \!
  \right\rangle_{\mathfrak{se}(2)},
\end{equation}
for $(g, \dot{g}_1)$ and $(g, \dot{g}_2)$ in $T\operatorname{SE}(2) $.

\paragraph{Incompressible Fluid Dynamics.}  The geometric description
of an incompressible fluid goes back at least to the work of
\cite{Ar66}, who described the motion of an incompressible fluid in a
fixed container $\mathcal{F}$ as a geodesic on the diffeomorphism group of
$\mathcal{F}$.  In the case of a fluid interacting with a rigid body, the
fluid container may change over time, reflecting the fact that the
rigid body moves.

Arnold's formulation can be extended to cover this case by considering
the \textbf{\emph{space of embeddings}} of the reference configuration of the
fluid (denoted by $\mathcal{F}_0$) into $\mathbb{R}^2$.  Recall that an embedding
$\varphi: \mathcal{F}_0 \hookrightarrow \mathbb{R}^2$ maps each reference point $X
\in \mathcal{F}_0$ to its current configuration $x = \varphi(X)$.  In order
to reflect the fact that the fluid is taken to be incompressible, we
require that any fluid embedding $\varphi$ is \textbf{\emph{volume
  preserving}}: if $d \mathbf{x}$ is the Euclidian area element on
$\mathbb{R}^2$ and $\eta_0$ is a fixed area element on $\mathcal{F}_0$, then we require
\begin{equation} \label{volforms}
  \varphi^{\ast}  (d \mathbf{x}) = \eta_0.
\end{equation}
The space of all such volume-preserving embeddings is denoted by
$\operatorname{Emb}(\mathcal{F}_0, \mathbb{R}^2)$.  In the sequel, we will specify additional
boundary conditions on the fluid configurations, stating for example
that the fluid is free to slide along the boundary of the solid, but
this does not make any difference for the current expository
treatment.

A \textbf{\emph{motion}} of a fluid is described by a curve $t \mapsto
\varphi(t)$ in $\operatorname{Emb}(\mathcal{F}_0, \mathbb{R}^2)$.  The \textbf{\emph{material velocity
  field}} is the tangent vector field $(\varphi_t, \dot{\varphi}_t)$
along the curve.  Here, $\dot{\varphi}_t$ is a map from $\mathcal{F}_0$ to
$T\mathbb{R}^2$, whose value at a point $x \in \mathcal{F}_0$ is given by
\[
   \dot{\varphi}_t(x) = \frac{d}{d t} \varphi_t(x) \in T_{\varphi(x)} \mathbb{R}^2.
\]

Note that $\dot{\varphi}_t$ is not a vector field in the traditional
sense; rather, it is a vector field along the map $\varphi_t$.  In
contrast, the \textbf{\emph{spatial velocity field}} $\mathbf{u}_t$, defined as
\begin{equation} \label{eulervelocity}
  \mathbf{u}_t = \dot{\varphi}_t \circ \varphi_t^{-1}
\end{equation}
is a proper vector field, defined on $\varphi_t(\mathcal{F}_0)$.  

The motion of a fluid can be described using the kinetic-energy
Lagrangian:
\begin{equation} \label{fluidenergy}
  T_{\mathrm{fluid}}(\varphi, \dot{\varphi}) = \frac{\rho}{2} \int_{\mathcal{F}_t} \left\Vert\mathbf{u}\right\Vert^2 d
  \mathbf{x}, 
\end{equation}
where $\mathbf{u}$ is the Eulerian velocity field (\ref{eulervelocity})
and the integration domain is the spatial domain of the fluid at time
$t$: $\mathcal{F}_t = \varphi_t(\mathcal{F}_0)$.  Just as in the case of the rigid
body, this kinetic energy induces a metric on the space $\operatorname{Emb}(\mathcal{F}_0,
\mathbb{R}^2)$, given by
\begin{equation} \label{embmetric} \left\langle \! \left\langle \dot{\varphi}_1,
    \dot{\varphi}_2 \right\rangle \!
  \right\rangle_{\operatorname{Emb}} = \rho \int_{\mathcal{F}_0}
  \dot{\varphi}_1 \cdot \dot{\varphi}_2 \, \eta_0.
\end{equation}
By changing variables, this metric can be rewritten in spatial form as
follows:
\[
\left\langle \! \left\langle\dot{\varphi}_1, \dot{\varphi}_2\right\rangle \!
  \right\rangle_{\operatorname{Emb}}
= \rho \int_{\mathcal{F}_t} \mathbf{u}_1 \cdot \mathbf{u}_2 \, d \mathbf{x}, 
\]
where $\mathbf{u}_i$ is the Eulerian velocity associated to
$\dot{\varphi}_i$: $\mathbf{u}_i = \dot{\varphi}_i \circ \varphi_i^{-1}$,
for $i = 1, 2$, and the integration is again over the spatial domain
$\mathcal{F}_t := \varphi_t(\mathcal{F}_0)$ of the fluid.

\paragraph{The Configuration Space of the Fluid-Solid System.}

The motion of a rigid body in an incompressible fluid combines aspects of both
rigid-body and fluid dynamics.  We assume that the body occupies a
circular region $\mathcal{B}_0$ in the reference configuration, and that the
remainder of the domain, denoted by $\mathcal{F}_0$, is taken up by the
fluid.  The configuration space for the fluid-solid system is made up
of pairs $(g, \varphi) \in \operatorname{SE}(2)  \times \operatorname{Emb}(\mathcal{F}_0, \mathbb{R}^2)$
satisfying the following conditions.

\begin{enumerate}
\item The embedding $\varphi$ represents the configuration of the
  fluid.  In particular, $\varphi$ is volume-preserving, \emph{i.e.}
  (\ref{volforms}) is satisfied.  In addition, we assume that
  $\varphi$ approaches the identity at infinity suitably
  fast.\footnote{We will not be concerned with any functional-analytic
    issues concerning these infinite-dimensional manifolds of
    mappings.  Instead, the reader is referred to \cite{EbinMarsden70}
    for more information.}

\item The element $g \in \operatorname{SE}(2) $ describes the configuration of the
  rigid body.

\item The fluid satisfies a ``slip'' boundary condition: the normal
  velocity of the fluid coincides with the normal velocity of the
  solid, while the tangent velocity can be arbitrary, reflecting the
  fact that there is no viscosity in the fluid.  Mathematically, this
  boundary condition is imposed by requiring that, as sets,
  $\varphi(\partial \mathcal{F}_0)$ is equal to $g(\partial \mathcal{B}_0)$, where
  $g \in \operatorname{SE}(2) $ is interpreted as a linear embedding of $\mathcal{B}_0$ into
  $\mathbb{R}^2$.
\end{enumerate}

We denote the space of all such pairs as $Q$; this is a submanifold of
$\operatorname{SE}(2)  \times \operatorname{Emb}(\mathcal{F}_0, \mathbb{R}^2)$.  The kinetic energy of the
fluid-solid system is given by the sum of the rigid-body energy
$T_{\mathrm{body}}$ and the kinetic energy $T_{\mathrm{fluid}}$ of the fluid:
\begin{align}
  T & = T_{\mathrm{fluid}} + T_{\mathrm{body}} \nonumber \\
    & = \frac{\rho}{2} \int_{\mathcal{F}_t} \left\Vert\mathbf{u}\right\Vert^2 d
  \mathbf{x} + \frac{\mathbb{I}}{2} \Omega^2 + \frac{m}{2}
  \mathbf{V}^2. \label{totalenergy} 
\end{align}
Similarly, there exists a metric on $Q$ given by the sum of the
metrics (\ref{metric:se}) and (\ref{embmetric}):
\begin{equation} \label{metric}
  \left\langle \! \left\langle(\dot{\varphi}_1, \dot{g}_1), (\dot{\varphi}_2, \dot{g}_2)\right\rangle \!
  \right\rangle
  = 
 \left\langle \! \left\langle \dot{\varphi}_1, \dot{\varphi}_2 \right\rangle \!
  \right\rangle_{\operatorname{Emb}} +
 \left\langle \! \left\langle \dot{g}_1, \dot{g}_2 \right\rangle \!
  \right\rangle_{\operatorname{SE}(2) }.
\end{equation}

The dynamics of rigid bodies moving in perfect fluids was studied
before by \cite{Ke1998}, \cite{Ra2003}, and \cite{KMlocomotion,
  KaOs08}.  A similar configuration space, but with the $\operatorname{SE}(2) $-factor
replaced by a suitable set of smooth manifolds, was studied by
\cite{freeboundary} for the dynamics of a liquid drop.

\paragraph{Particle Relabelling Symmetry.}

The kinetic energy $T_{\mathrm{fluid}}$ of the fluid is invariant if we
replace $\dot{\varphi}$ by $\dot{\varphi} \circ \phi$, where $\phi$ is
a volume-preserving diffeomorphism from $\mathcal{F}_0$ to itself.  This
represents the \textbf{\emph{particle relabelling symmetry}} referred to in the
introduction. 

Recall that a diffeomorphism $\phi: \mathcal{F}_0 \rightarrow \mathcal{F}_0$ is
volume-preserving if $\phi^{\ast}  \eta_0 = \eta_0$, where $\eta_0$ is the volume element on $\mathcal{F}_0$.  The group of all
volume-preserving diffeomorphisms is denoted by $\mathrm{Diff}_{\mathrm{vol}}$.  This group acts on
the right on $\operatorname{Emb}(\mathcal{F}_0, \mathbb{R}^2)$ by putting $\varphi \cdot \phi =
\varphi \circ \phi$, and hence also on $Q$.  The action of
$\mathrm{Diff}_{\mathrm{vol}}$ on $Q$ makes $Q$ into the total space of a principal
fiber bundle over $\operatorname{SE}(2) $.  In other words, if we define the
projection $\pi: Q \rightarrow \operatorname{SE}(2) $ as being the projection onto the
first factor: $\pi(g, \varphi) = g$, then the fibers of $\pi$
coincide precisely with the orbits of $\mathrm{Diff}_{\mathrm{vol}}$ in $Q$. 

\paragraph{Vorticity and Circulation.}

In classical fluid dynamics, the \textbf{\emph{vorticity}} $\boldsymbol{\mu}$
is defined as the curl of the velocity field: $\boldsymbol{\mu} =
\nabla \times \mathbf{u}$, and the \textbf{\emph{circulation}} around the rigid
body is the line integral of $\mathbf{u}$ along any curve encircling the
rigid body.  In two dimensions, $\boldsymbol{\mu}$ can be written as
$\boldsymbol{\mu} = \mu \mathbf{e}_3$, where $\mu$ is called the scalar
vorticity.

According to Noether's theorem, there is a conserved quantity
associated to the particle relabelling symmetry.  This conserved
quantity turns out to be precisely the circulation of the fluid:
\[
  \frac{d}{dt} \oint_\mathcal{C} \mathbf{u} \cdot d \mathbf{l} = 0,
\]
and Noether's theorem hence becomes Kelvin's theorem, which states
that circulation is materially constant.  As a consequence 
of Green's theorem, the circulation of the fluid
is related to the vorticity:
\[
  \oint_\mathcal{C} \mathbf{u} \cdot d \mathbf{l} = \int_ \mathcal{S}  \nabla \times \mathbf{u}
  \cdot d \mathbf{S},
\]
where $\mathcal{S} $ is any surface whose boundary is $\mathcal{C}$.  It would lead
us too far to explore the geometry of vorticity and circulation in
detail (for this, we refer to \cite{ArnoldKhesin}) but at this stage
we just note that \emph{the conservation of vorticity is closely linked to
the particle relabelling symmetry}.

\paragraph{The Helmholtz-Hodge Decomposition.}

If (in addition to being incompressible) the fluid is irrotational,
meaning that $\nabla \times \mathbf{u} = 0$, then there exists a velocity
potential $\Phi$ such that $\mathbf{u} = \nabla \Phi$.  In the presence
of point vortices, the fluid is not irrotational but the velocity
field $\mathbf{u}$ can be uniquely decomposed in an irrotational part
and a vector field $\mathbf{u}_{\rm v}$ representing the ``rotational''
contributions (see for instance \cite{Sa1992} or \cite{Ne01}).  This is the well-known Helmholtz-Hodge decomposition:
\begin{equation} \label{hhdecomp}
  \mathbf{u} = \nabla \Phi + \mathbf{u}_{\rm v}.
\end{equation}

Here, $\mathbf{u}_{\rm v}$ is a divergence-free vector field which is
tangent to the boundary of $\mathcal{F}$, while the potential $\Phi$ is the
solution to Laplace's equation $\nabla^2 \Phi = 0$, subject to the
boundary conditions that the normal derivative of $\Phi$ equals the
normal velocity of the rigid body, and that the velocity vanishes at
infinity.  In other words, one has
\begin{equation} \label{neumann} \nabla^2 \Phi = 0 \quad \text{and} \quad 
  \frac{\partial \Phi}{\partial n} \Big|_{\partial \mathcal{F}} =
  (\boldsymbol{\omega} \times (\mathbf{x} - \mathbf{x}_0) + \mathbf{v})
  \cdot \mathbf{n} \quad \text{(for $\mathbf{x} \in \partial \mathcal{F}$)},
\end{equation}
and $\Phi$ goes to zero as $\left|\mathbf{x}\right|$ goes to infinity.
Here $\boldsymbol{\omega}$ and $\mathbf{v}$ are the angular and
translational velocity of the rigid body, expressed in a spatial
frame, while $\mathbf{x}_0$ represents the location of the center of
mass.

Since $\Phi$ depends on $\omega$ and $\mathbf{v}$ in a
linear way, we may decompose $\Phi$, following Kirchhoff, as
\begin{equation} \label{kirchhoff}
\Phi(g, \dot{g}; \mathbf{x}) = v_x \Phi_x(g, \mathbf{x}) + v_y \Phi_y(g,
\mathbf{x}) + \omega \Phi_\omega(g, \mathbf{x}).
\end{equation}
Here $\Phi_x$, $\Phi_y$, and $\Phi_\omega$ are elementary velocity
potentials corresponding to infinitesimal translations in the $x$- and
$y$-direction and to a rotation, respectively.  They satisfy the
Laplace equation with the following boundary conditions:
\[
  \frac{\partial \Phi_\omega}{\partial n} = (\mathbf{e}_3 \times
  \mathbf{x})\cdot\mathbf{n}, \quad
  \frac{\partial \Phi_x}{\partial n} = \mathbf{e}_1 \cdot \mathbf{n}, \quad \text{and} \quad 
  \frac{\partial \Phi_y}{\partial n} = \mathbf{e}_2 \cdot \mathbf{n}.
\]

Note that $\Phi$ depends on the boundary data, and hence on $(g,
\dot{g})$, while the elementary potentials depend on the location of
the rigid body (encoded by $g$) only.  For the sake of clarity, we
will suppress these arguments when no confusion is possible.  The elementary velocity potentials for a circular body are given below: the reader can check that they indeed depend on the location of the rigid body.

The Helmholtz-Hodge decomposition defines a connection on the
principal fiber bundle $Q \rightarrow \operatorname{SE}(2) $.  To see this, note that
any tangent vector to $Q$ is of the form $(g, \dot{g}; \varphi,
\dot{\varphi})$.  We define its horizontal and vertical part by
applying the Helmholtz-Hodge decomposition to $\mathbf{u} = \dot{\varphi}
\circ \varphi^{-1}$, and we put
\[
(g, \dot{g}; \varphi, \dot{\varphi})^H = (g, \dot{g}; \varphi, \nabla
\Phi \circ \varphi) \quad \text{and} \quad  (g, \dot{g}; \varphi, \dot{\varphi})^V =
(g, 0; \varphi, \mathbf{u}_{\rm v} \circ \varphi)
\]
We will verify in section~\ref{sec:neumann} below that this
prescription indeed defines a connection, which we term the
\textbf{\emph{Neumann connection}}, since its horizontal subspaces are found by
solving the Neumann problem (\ref{neumann}). This connection was used
in a variety of contexts, ranging from the dynamics of fluid drops
(see \cite{freeboundary, montgomery}) to problems in
optimal transport (see \cite{KhLe08}).

\paragraph{The Lie Algebra of Divergence-free Vector Fields and its Dual.}

At least on a formal level, $\mathrm{Diff}_{\mathrm{vol}}$ is a Lie group with
associated to it a Lie algebra, denoted by $\mathfrak{X}_{\mathrm{vol}}$ and consisting
of divergence-free vector fields which are parallel to the boundary of
$\mathcal{F}_0$.  The bracket on $\mathfrak{X}_{\mathrm{vol}}$ is the Jacobi-Lie bracket of
vector fields (which is the negative of the usual bracket of vector
fields) and its dual, denoted by $\mathfrak{X}^{\ast} _{\mathrm{vol}}$, is the set of linear
functionals on $\mathfrak{X}_{\mathrm{vol}}$.  This set can be identified with the set of
one-forms on $\mathcal{F}_0$ modulo exact forms:
\[
  \mathfrak{X}^{\ast} _{\mathrm{vol}}(\mathcal{F}_0) = \Omega^1(\mathcal{F}_0)/ \mathbf{d} \Omega^0(\mathcal{F}_0),
\] 
(see \cite{ArnoldKhesin}) while the duality pairing
between elements of $\mathfrak{X}_{\mathrm{vol}}$ and $\mathfrak{X}^{\ast} _{\mathrm{vol}}$ is given by
\[
\left< \mathbf{u}, [\alpha] \right> = 
  \int_{\mathcal{F}_0} \alpha(\mathbf{u}) \, \eta_0,
\]
where $\mathbf{u} \in \mathfrak{X}_{\mathrm{vol}}$ and $[\alpha] \in \mathfrak{X}^{\ast} _{\mathrm{vol}}$.  Note
that the right-hand side does not depend on the choice of
representative $\alpha$.  

Another interpretation of the dual Lie algebra $\mathfrak{X}^{\ast} _{\mathrm{vol}}$ is as
the set of exact two-forms on $\mathcal{F}_0$.  Any class $[\alpha]$ in
$\mathfrak{X}_{\mathrm{vol}}^{\ast} $ is uniquely determined by the exterior differential
$\mathbf{d} \alpha$ and by the value of $\alpha$ on the generators of the
first homology of $\mathcal{F}_0$.  In our case, the first homology group of
$\mathcal{F}_0$ is generated by any closed curve $\mathcal{C}$ encircling the
rigid body, and its pairing with $\alpha$ is given by
\[
   \Gamma = \int_\mathcal{C} \alpha.
\]
Under this identification, $\mathbf{d} \alpha$ represents the
\textbf{\emph{vorticity}}, while $\Gamma$ represents the
\textbf{\emph{circulation}}.  Since in our case the circulation is assumed to
be zero, it follows that $[\alpha]$ is completely determined by $\mathbf{d}
\alpha$.

From a geometric point of view, the vorticity field can be interpreted
as an element of $\mathfrak{X}_{\mathrm{vol}}^{\ast} $: if we assume that the fluid is
moving on an arbitrary Riemannian manifold, then the vorticity can be
defined by 
\[
  \mu = \mathbf{d} (\varphi^{\ast}  \mathbf{u}^\flat).
\]
Here $\flat$ is the flat operator associated to the metric.  In the
case of Euclidian spaces, this definition reduces to the one involving
the curl of $\mathbf{u}$.  This definition is slightly different from the
one in \cite{ArnoldKhesin}, where vorticity is defined as a two-form
on the inertial space $\mathbb{R}^2$, whereas in our interpretation,
vorticity lives on the material space $\mathcal{F}_0$.  Both definitions are
related by push-forward and pull-back by $\varphi$, and hence carry
the same amount of information.  Our definition has the advantage that
$\mu$ is naturally an element of $\mathfrak{X}_{\mathrm{vol}}^{\ast} $, which is preferable
from a geometric point of view.

We finish this section by noting that any Lie group acts on its Lie
algebra and its dual Lie algebra through the adjoint and the
co-adjoint action, respectively.  For the group of volume-preserving
diffeomorphisms, both are given by pull-back: if $\phi$ is an element
of $\mathrm{Diff}_{\mathrm{vol}}$, and $\mathbf{u}$ and $\mathbf{d} \alpha$ are elements of
$\mathfrak{X}_{\mathrm{vol}}$ and $\mathfrak{X}_{\mathrm{vol}}^{\ast} $, respectively, then
\[
  \mathrm{Ad}_\phi(\mathbf{u}) = \phi^{\ast}  \mathbf{u}
    \quad \text{and} \quad  
  \mathrm{CoAd}_\phi(\mathbf{d} \alpha) = \mathbf{d} (\phi^{\ast}  \alpha).
\]

\subsection{Point Vortices Interacting with a Circular Cylinder}

In this section, we impose some specific assumptions on the rigid body
and the fluid.  It should be pointed out that while these assumptions
greatly simplify the exposition, the general reduction procedure can
be carried out under far less stringent assumptions.  Later on, we
will discuss how some of these assumptions may be removed.

\paragraph{The Rigid Body.}

In order to tie in this work with previous research efforts, we assume
the rigid body to be \textbf{\emph{circular}} with radius $R$ and neutrally
buoyant (\emph{i.e.} its body weight $m$ is balanced by the force of
buoyancy).  If the density of the fluid is set to $\rho = 1$, this
implies that the body has mass $m = \pi R^2$.  The moment of inertia
of the body around the axis of symmetry is denoted by $\mathbb{I}$.

For a rigid planar body of circular shape, the elementary velocity
potentials $\Phi_x$, $\Phi_y$, and $\Phi_\omega$ occurring in
(\ref{kirchhoff}) can be calculated analytically (see \cite{lamb}) and
are given by
\begin{equation} \label{elempot}
  \Phi_x = - R\frac{x-x_0}{(x-x_0)^2 + (y-y_0)^2}, \quad \text{and} \quad 
\Phi_y = - R\frac{y-y_0}{(x-x_0)^2 + (y-y_0)^2}, 
\end{equation}
while $\Phi_\omega = 0$, reflecting the rotational symmetry of the
body.  Here $(x_0, y_0)$ are the coordinates of the center of
the disc.

In some cases, it will be more convenient to express $\Phi$ in body
coordinates.  In analogy with (\ref{kirchhoff}), we may write 
\[
  \Phi(g, \dot{g}, \mathbf{X}) = V_x \Phi_X(\mathbf{X}) + V_y
  \Phi_Y(\mathbf{X}) + \Omega \Phi_\Omega(\mathbf{X}), 
\]
where $(V_x, V_y)$ and $\Omega$ are the translational and angular
velocity in the body frame, respectively.  For the circular cylinder,
the elementary potentials in body frame are given by 
\begin{equation} \label{bodypot}
  \Phi_X = - R\frac{X}{X^2 + Y^2}, \quad 
\Phi_Y = - R\frac{Y}{X^2 + Y^2} \quad \text{and} \quad  \Phi_\Omega = 0.
\end{equation}
Note that $\Phi_X, \Phi_Y$ and $\Phi_\Omega$ do not depend on the
location of the rigid body, in contrast to $\Phi_x, \Phi_y$ and
$\Phi_\omega$.

\paragraph{Point Vortices.}

As for the fluid, we make the fundamental assumption that the
vorticity is concentrated in $N$ \textbf{\emph{point vortices}} of strengths
$\Gamma_i$, $i= 1, \ldots, N$, and that there is no circulation. 
Considered as a two-form on $\mathcal{F}_0$, the 
former means that the vorticity is given by
\begin{equation} \label{vorticity} \mu = \sum_{i = 1}^N \Gamma_i
  \delta(\bar{\mathbf{x}} - \bar{\mathbf{x}}_i) \mathbf{d} \bar{x} \wedge \mathbf{d}
  \bar{y},
\end{equation}
where $(\bar{x}, \bar{y})$ are coordinates on $\mathcal{F}_0$, and
$\bar{\mathbf{x}}_i$ is the reference location of the $i$th vortex, $i =
1, \ldots, N$.  As pointed out above, $\mu$ is an element of $\mathfrak{X}_{\mathrm{vol}}^{\ast} $.

\paragraph{The Kirchhoff-Routh Function.}

\cite{cylvortices} showed that the kinetic energy for the vortex
system is the negative of the Kirchhoff-Routh function $W_G$ for a
system of $N$ point vortices moving in a domain with moving
boundaries:
\begin{equation} \label{KR}
  T_{\mathrm{vortex}}(\mathbf{X}_1, \ldots, \mathbf{X}_N) = -W_G(\mathbf{X}_1, \ldots,
  \mathbf{X}_N). 
\end{equation}

The precise form of $W_G$ is given by
\begin{equation} \label{krfunction}
W_G(\mathbf{X}_1, \ldots, \mathbf{X}_N) = \sum_{i,j (i>j)} \Gamma_i \Gamma_j
G(\mathbf{X}_i, \mathbf{X}_j) + \frac{1}{2} \sum_i \Gamma_i g(\mathbf{X}_i,
\mathbf{X}_i),
\end{equation}
where $G(\mathbf{X}_0, \mathbf{X}_1)$ is a Green's function for the Laplace
operator of the form 
\begin{equation} \label{green}
  G(\mathbf{X}_0, \mathbf{X}_1) = g(\mathbf{X}_0, \mathbf{X}_1) + \frac{1}{4\pi}
  \log\left\Vert\mathbf{X}_0 - \mathbf{X}_1\right\Vert^2.
\end{equation}
The function $g$ is harmonic in the fluid domain and is the stream
function of $\mathbf{u}_I$.  As pointed out in \cite{cylvortices}, in the
case of a circular cylinder, $g$ can be calculated explicitly using
Milne-Thomson's circle theorem (\cite{MiTh1968}), and is given by
\[
  g(\mathbf{X}, \mathbf{Y}) = \frac{1}{4\pi} \log \left\Vert\mathbf{X}\right\Vert^2 -
  \frac{1}{4\pi} \log\left\Vert\mathbf{X} - \frac{R^2}{\left\Vert\mathbf{Y}\right\Vert^2} \mathbf{Y}\right\Vert^2.
\]

From a geometrical point of view, \cite{mw1983} showed that the vortex energy can be obtained (up to some ``self-energy'' terms) by inverting the relation $\mu = \nabla \times \mathbf{u}$, where $\mu$ is the scalar part of (\ref{vorticity}), and substituting the resulting velocity field $\mathbf{u}$ generated by $N$ vortices into the expression for the kinetic energy of the fluid.  Although the analysis of \cite{mw1983} was for  an unbounded fluid domain, their result can easily be extended to the case considered here, yielding again the negative of the Kirchhoff-Routh function as in (\ref{KR}).

\paragraph{Dynamics of the Fluid-Solid System.}

As stated in the introduction, \cite{cylvortices} (SMBK) were the
first to derive the equations of motion for a rigid cylinder
interacting with $N$ point vortices.  These equations 
generalize both the Kirchhoff equations for a rigid body in a
potential flow and the equations for $N$ point vortices in a bounded
flow.  Rather remarkably, SMBK established by direct inspection that these equations are
Hamiltonian with respect to the canonical Poisson structure on $\mathfrak{se}(2)^{\ast} 
\times \mathbb{R}^{2N}$ (\emph{i.e.} the sum of the Poisson structures on both
factors), and a Hamiltonian given below involving the kinetic energy and interaction terms.

The SMBK equations are given by
\begin{equation} \label{SMBK}
\frac{d \mathbf{L}}{d t} = 0, \quad \frac{d \mathbf{A}}{d t} + \mathbf{V}
\times \mathbf{L} = 0, \quad \text{and} \quad  \Gamma_k \frac{d \mathbf{X}_k}{dt} = -J
\frac{\partial H}{\partial \mathbf{X}_k},
\end{equation}
where $\mathbf{L}$ and $\mathbf{A} = A \mathbf{e}_3$ are the translational and
angular momenta of the system, defined by
\begin{align}
  \mathbf{L} & = c \mathbf{V} + \sum_{k=1}^N \Gamma_k \mathbf{X}_k \times \mathbf{e}_3 +
  \sum_{k=1}^N \Gamma_k \mathbf{e}_3 \times
  \frac{\mathbf{X}_k}{\left\Vert\mathbf{X}_k\right\Vert^2} \label{shiftedmomentum} \\
  A & = \mathbb{I} \Omega - \frac{1}{2} \sum_{i=1}^N \Gamma_i \left\Vert\mathbf{X}_i\right\Vert^2, \nonumber
\end{align}
and $H$ is the Hamiltonian: 
\begin{align}
H(\mathbf{L}, \mathbf{X}_k) & = -W(\mathbf{L}, \mathbf{X}_k) + \frac{1}{2c}
\left\Vert\mathbf{L}\right\Vert^2  - \frac{1}{c} \Big( \sum \Gamma_k (\mathbf{L} \times
  \mathbf{X}_k) \cdot \mathbf{e}_3 \nonumber \\ &- \frac{1}{2} \sum \Gamma_k^2
  \left\Vert\mathbf{X}_k\right\Vert^2 - \sum_{j > k} \Gamma_k\Gamma_j
  \mathbf{X}_k\cdot\mathbf{X}_j \label{bighamilt} \\ & + \frac{1}{2} \left<\sum \Gamma_k
    \frac{\mathbf{X}_k}{\left\Vert\mathbf{X}_k\right\Vert^2}, \sum \Gamma_k
    \frac{\mathbf{X}_k}{\left\Vert\mathbf{X}_k\right\Vert^2}\right>\Big). \nonumber
\end{align}

Here, $c$ is the total mass of the cylinder, consisting of the
intrinsic mass $m$ and the added mass $\pi R^2$ (due to the presence
of the fluid): $c = m + \pi R^2 = 2\pi R^2$.

\begin{theorem}
  The SMBK equations are Hamiltonian on the space $\mathfrak{se}(2)^{\ast} 
  \times \mathbb{R}^{2N}$ equipped with the Poisson bracket
  \begin{equation} \label{PoissSMBK}
    \{F, G\}_{\mathfrak{se}(2)^{\ast}  \times \mathbb{R}^{2N}} =
    \{F_{|\mathfrak{se}(2)^{\ast} }, G_{|\mathfrak{se}(2)^{\ast} }\}_{\mathrm{l.p}} + 
    \{F_{|\mathbb{R}^{2N}}, G_{|\mathbb{R}^{2N}}\}_{\mathrm{vortex}}.
  \end{equation}
  Here, $F$ and $G$ are functions on $\mathfrak{se}(2)^{\ast}  \times
  \mathbb{R}^{2N}$.  
\end{theorem}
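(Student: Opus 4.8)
The plan is to verify directly that Hamilton's equations $\dot{F} = \{F, H\}$ for the bracket (\ref{PoissSMBK}) and the Hamiltonian (\ref{bighamilt}) reproduce the SMBK equations (\ref{SMBK}), by evaluating the bracket on the coordinate functions. First I would fix coordinates $(A, L_x, L_y)$ on $\mathfrak{se}(2)^{\ast}$ and $(X_k, Y_k)$ on $\mathbb{R}^{2N}$ and make the two summands of (\ref{PoissSMBK}) explicit. Reading off the structure constants of $\mathfrak{se}(2)$ from the basis (\ref{sebasis}) --- namely $[\mathbf{e}_\Omega, \mathbf{e}_x] = \mathbf{e}_y$ and $[\mathbf{e}_\Omega, \mathbf{e}_y] = -\mathbf{e}_x$ --- the (minus) Lie--Poisson bracket on the first factor is
\[
  \{A, L_x\}_{\mathrm{l.p}} = -L_y, \quad \{A, L_y\}_{\mathrm{l.p}} = L_x, \quad \{L_x, L_y\}_{\mathrm{l.p}} = 0,
\]
which I write compactly as $\{A, \mathbf{L}\}_{\mathrm{l.p}} = \mathbf{e}_3 \times \mathbf{L}$, while the vortex bracket on the second factor is the weighted symplectic structure $\{X_k, Y_k\}_{\mathrm{vortex}} = 1/\Gamma_k$, all remaining brackets of coordinate functions vanishing.

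Next I would dispose of the two equations that follow almost immediately. The Hamiltonian (\ref{bighamilt}) is a function of $\mathbf{L}$ and the $\mathbf{X}_k$ alone and does not depend on $A$; since moreover $L_x$ and $L_y$ are constant along the $\mathbb{R}^{2N}$-factor, the only term that could contribute to $\dot{\mathbf{L}} = \{\mathbf{L}, H\}$ is the Lie--Poisson bracket paired against $\partial H / \partial A$, which vanishes. Hence $\dot{\mathbf{L}} = 0$, the first equation in (\ref{SMBK}). For the vortices, $X_k$ and $Y_k$ Poisson--commute with the whole $\mathfrak{se}(2)^{\ast}$-factor, so $\dot{\mathbf{X}}_k = \{\mathbf{X}_k, H\}$ reduces to the vortex bracket, yielding $\Gamma_k \dot{X}_k = \partial H / \partial Y_k$ and $\Gamma_k \dot{Y}_k = -\partial H / \partial X_k$; in vector form this is precisely $\Gamma_k \dot{\mathbf{X}}_k = -J\, \partial H / \partial \mathbf{X}_k$ once the orientation of $J$ is matched against the sign of the vortex bracket.

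The delicate step, which I expect to be the main obstacle, is the angular momentum equation. The Lie--Poisson part gives
\[
  \dot{A} = \{A, H\}_{\mathrm{l.p}} = (\mathbf{L} \times \nabla_{\mathbf{L}} H) \cdot \mathbf{e}_3,
\]
so the cross term $\mathbf{V} \times \mathbf{L}$ has to emerge from identifying $\nabla_{\mathbf{L}} H$ with the physical translational velocity $\mathbf{V}$. Differentiating (\ref{bighamilt}) in $\mathbf{L}$ and eliminating $\mathbf{L}$ in favour of $\mathbf{V}$ through the momentum relation (\ref{shiftedmomentum}) shows that $\nabla_{\mathbf{L}} H$ coincides with $\mathbf{V}$ only up to extra contributions: one from the image-vortex term $\sum_k \Gamma_k\, \mathbf{e}_3 \times \mathbf{X}_k / \left\Vert \mathbf{X}_k \right\Vert^2$ in (\ref{shiftedmomentum}), and one from the velocity-dependent part of the Kirchhoff--Routh potential $W$. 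The crux is to show, using the identity $\mathbf{L} \times (\mathbf{e}_3 \times \mathbf{w}) = (\mathbf{L} \cdot \mathbf{w})\, \mathbf{e}_3$ valid for in-plane $\mathbf{L}$, that these extra pieces cancel against one another after projection onto $\mathbf{e}_3$, leaving exactly $\dot{A} = (\mathbf{L} \times \mathbf{V}) \cdot \mathbf{e}_3 = -(\mathbf{V} \times \mathbf{L}) \cdot \mathbf{e}_3$, the second equation in (\ref{SMBK}). This bookkeeping --- tracking the full $\mathbf{L}$-dependence of $H$, in particular the part buried in $W$ --- is the only genuinely nontrivial ingredient; once it is in place, assembling the three computations shows that (\ref{SMBK}) is Hamiltonian for the bracket (\ref{PoissSMBK}), as claimed.
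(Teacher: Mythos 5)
Your proposal is correct in structure and its coordinate computations check out: the structure constants $[\mathbf{e}_\Omega,\mathbf{e}_x]=\mathbf{e}_y$, $[\mathbf{e}_\Omega,\mathbf{e}_y]=-\mathbf{e}_x$ reproduce the matrix $\Lambda$ in (\ref{liepoisson}), the two ``easy'' equations ($\dot{\mathbf{L}}=0$ because $H$ in (\ref{bighamilt}) is independent of $A$, and the vortex equations from $\{X_k,Y_k\}_{\mathrm{vortex}}=1/\Gamma_k$) follow as you say, and the angular equation does reduce to showing that $\mathbf{L}\times\bigl(\nabla_{\mathbf{L}}H-\mathbf{V}\bigr)$ has no $\mathbf{e}_3$-component. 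But this is a genuinely different proof from the paper's. What you describe is precisely the ``direct inspection'' argument that the paper attributes to Shashikanth, Marsden, Burdick and Kelly and deliberately does not repeat: in the paper the theorem is quoted as background, and its own justification is the reduction-theoretic derivation of sections~\ref{sec:diff} and~\ref{sec:redse}. There, the fluid--solid system is a geodesic flow on $Q$; reduction by $\mathrm{Diff}_{\mathrm{vol}}$ yields the magnetic symplectic form (\ref{magsympexpl}) on $T^{\ast}\operatorname{SE}(2)\times\mathbb{R}^{2N}$; reduction by $\operatorname{SE}(2)$ yields the BMR bracket (\ref{bmrbracket}) with interaction terms (theorem~\ref{thm:bmr}); and the shift map $\hat{\mathcal{S}}$ of theorem~\ref{thm:shift} --- which is exactly Shashikanth's map of theorem~\ref{thm:shashi} --- carries that bracket to the product bracket (\ref{PoissSMBK}), up to a cocycle proportional to the total vortex strength $\Gamma=\sum_k\Gamma_k$, which vanishes when $\Gamma=0$. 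Your route buys a short, finite-dimensional, self-contained verification; the paper's route buys an explanation of \emph{why} the system is Hamiltonian and where both the bracket and the Hamiltonian come from, and it produces as by-products the Poisson map of theorem~\ref{thm:shashi} and the symplectic leaves.

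Two caveats on completing your plan. First, the crux cancellation in the $\dot A$ equation cannot be finished from the data in this paper alone: the $\mathbf{L}$-dependence of $W$ in (\ref{bighamilt}) --- the stream function of the cylinder-induced flow evaluated at the vortex positions --- is nowhere written out here and must be imported from the SMBK paper; only then do the image-vortex term of (\ref{shiftedmomentum}) and $\nabla_{\mathbf{L}}W$ cancel as you anticipate. Second, the paper's derivation shows that the plain product bracket governs the reduced dynamics only when $\Gamma=0$; for $\Gamma\neq 0$ a non-equivariance cocycle appears. Your direct verification would meet this as stray terms proportional to $\Gamma$ in the bookkeeping, so you should either state the hypothesis $\Gamma=0$ or track those terms explicitly.
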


In the theorem above, $\{\cdot, \cdot\}_{\mathrm{l.p}}$ is the Lie-Poisson
bracket on $\mathfrak{se}(2)^{\ast} $: 
\begin{equation} \label{liepoisson}
  \{f_1, f_2\}_{\mathfrak{se}(2)^{\ast} } = (\nabla f_1)^T \Lambda \nabla f_2,
\quad 
\text{where}
\quad
  \Lambda = \begin{pmatrix} 
    0 & -P_y & P_x \\
    P_y & 0 & 0 \\
    -P_x & 0 & 0 
    \end{pmatrix},
\end{equation}
for arbitrary functions $f_1, f_2$ on $\mathfrak{se}(2)^{\ast} $.  Similarly,
$\{\cdot, \cdot\}_{\mathrm{vortex}}$ is the vortex bracket, given by:
\begin{equation} \label{vortexbracket}
  \{g_1, g_2\}_{\mathrm{vortex}} = \sum_{i=1}^N \frac{1}{\Gamma_i} \left( \frac{\partial
    g_1}{\partial X_i}\frac{\partial
    g_2}{\partial Y_i} - \frac{\partial
    g_2}{\partial X_i}\frac{\partial
    g_1}{\partial Y_i} \right),
\end{equation}
where $g_1, g_2$ are arbitrary functions on $\mathbb{R}^{2N}$.

\paragraph{The BMR Equations.}

A completely different perspective on the rigid body interacting with
$N$ point vortices is offered by \cite{BoMaRa2003} (BMR).  From their
point of view, the equations of motion are again written in
Hamiltonian form on $\mathfrak{se}(2)^{\ast}  \times \mathbb{R}^{2N}$, but now with a
\emph{noncanonical} Poisson bracket:
\[
  \dot{F} = \{F, H\}_{\mathrm{BMR}}
\]
for all functions $F$ on $\mathfrak{se}(2)^{\ast}  \times \mathbb{R}^{2N}$.  The
Hamiltonian is the sum of the kinetic energies of the subsystems,
without interaction terms:
\[
  H = \frac{c}{2} \left< \mathbf{V}, \mathbf{V} \right> -
  W_G(\mathbf{V}, \mathbf{X}_k),
\]
whereas the Poisson bracket is determined by 
its value on the coordinate functions:
\begin{align}
  \{V_1, V_2\}_{\mathrm{BMR}} & = \frac{\Gamma}{c^2} - \sum \frac{\Gamma}{c^2}
  \frac{R_i^4 - R^4}{R_i^4}, \, & \{V_1, X_i\}_{\mathrm{BMR}} & = \frac{1}{c} 
  \frac{R_i^4 - R^2(X_i^2 - Y_i^2)}{R_i^4},\nonumber \\
  \{V_1, Y_i\}_{\mathrm{BMR}} & = -\frac{2R^2}{c} \frac{X_iY_i}{R_i^4}, 
  & \{V_2, X_i\}_{\mathrm{BMR}} & = -\frac{2R^2}{c}
  \frac{X_iY_i}{R_i^4}, \label{bmrbracket} \\
  \{V_2, Y_i\}_{\mathrm{BMR}} & =  \frac{1}{c} \frac{R_i^4 + R^2(X_i^2 -
    Y_i^2)}{R_i^4}, & \{X_i, Y_i\}_{\mathrm{BMR}} & = - \frac{1}{\Gamma_i}, \nonumber
\end{align}
where $R_i^2 = \left\Vert\mathbf{X}_i\right\Vert^2$, $\mathbf{V} = (V_1, V_2)$, 
$\mathbf{X}_i = (X_i, Y_i)$, and $\Gamma$ is the total vortex strength: 
\[
   \Gamma = \sum_{i=1}^N \Gamma_i.
\]
Note that this Poisson bracket differs from the one in
\cite{BoMaRa2003} by an overall factor of $2\pi$.  \cite{Sh2005}
showed that this discrepancy can be attributed to the way in which BMR
choose the fluid density.

\paragraph{The Link between the SMBK and the BMR Equations.}

By explicit calculation, \cite{Sh2005} constructed a Poisson 
map taking the SMBK equations into the BMR equations.  His
result is listed below.

\begin{theorem} \label{thm:shashi}
  The map $\mathcal{S} : (\mathbf{L}, A; \mathbf{X}_1, \ldots, \mathbf{X}_N) \mapsto
  (\mathbf{V}, \Omega; \mathbf{X}_1, \ldots, \mathbf{X}_N)$, where $\mathbf{L}$
  and $A$ are given by \textup{\eqref{shiftedmomentum}}, is a Poisson map from
  $\mathfrak{se}(2)^{\ast}  \times \mathbb{R}^{2N}$ equipped with the bracket $\{\cdot,
  \cdot\}_{\mathfrak{se}(2)^{\ast}  \times \mathbb{R}^{2N}}$ to 
  $\mathfrak{se}(2)^{\ast}  \times \mathbb{R}^{2N}$ with the bracket $\{\cdot,
  \cdot\}_{\mathrm{BMR}}$.  
\end{theorem}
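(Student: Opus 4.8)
The plan is to verify directly the defining identity of a Poisson map, namely $\{F\circ\mathcal{S}, G\circ\mathcal{S}\}_{\mathfrak{se}(2)^{\ast} \times \mathbb{R}^{2N}} = \{F, G\}_{\mathrm{BMR}}\circ\mathcal{S}$, on a generating set of functions. Since both sides are bilinear and act as derivations in each argument, it suffices to check the identity when $F$ and $G$ range over the coordinate functions $V_1, V_2, \Omega, X_i, Y_i$ of the target. First I would invert \eqref{shiftedmomentum} to express these target coordinates as functions on the source: writing $\mathbf{K} = \sum_k \Gamma_k(\mathbf{X}_k\times\mathbf{e}_3 + \mathbf{e}_3\times\mathbf{X}_k/\|\mathbf{X}_k\|^2)$ for the momentum-shift term, one has $\mathbf{V} = c^{-1}(\mathbf{L} - \mathbf{K})$ and $\Omega = \mathbb{I}^{-1}(A + \frac{1}{2}\sum_i \Gamma_i\|\mathbf{X}_i\|^2)$, while the vortex positions $X_i, Y_i$ pull back to themselves.

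Next I would compute each pulled-back bracket using the direct-sum structure of $\{\cdot,\cdot\}_{\mathfrak{se}(2)^{\ast} \times \mathbb{R}^{2N}}$ from \eqref{PoissSMBK}: the Lie-Poisson bracket $\{\cdot,\cdot\}_{\mathrm{l.p}}$ on $(A, P_x, P_y)$ governed by $\Lambda$ of \eqref{liepoisson}, plus the vortex bracket \eqref{vortexbracket} on $\mathbb{R}^{2N}$. Several terms collapse immediately: because $V_1, V_2$ are affine in $(P_x, P_y)$ with no $A$-dependence and $\{P_x, P_y\}_{\mathrm{l.p}} = 0$, the entire Lie-Poisson contribution to $\{V_a, V_b\}$ and to $\{V_a, X_i\}, \{V_a, Y_i\}$ vanishes, so these reduce to pure vortex-bracket computations of the shift $\mathbf{K}$. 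Using $\{f, X_i\}_{\mathrm{vortex}} = -\Gamma_i^{-1}\partial f/\partial Y_i$ and $\{f, Y_i\}_{\mathrm{vortex}} = \Gamma_i^{-1}\partial f/\partial X_i$, the mixed brackets $\{V_a, X_i\}, \{V_a, Y_i\}$ become single partial derivatives of $\mathbf{K}$, which I expect to reproduce the $R^2$-weighted entries of \eqref{bmrbracket}; the non-image part of $\{V_1, V_2\}_{\mathrm{vortex}}$ yields the constant $\Gamma/c^2$, and the image part $\propto \mathbf{X}_k/\|\mathbf{X}_k\|^2$ supplies the $\sum_i \Gamma_i(R_i^4 - R^4)/R_i^4$ correction. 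The diagonal bracket $\{X_i, Y_i\}$ is carried over unchanged from the vortex factor, and the brackets involving $\Omega$ reduce, via the same identities applied to $\frac{1}{2}\sum_i \Gamma_i\|\mathbf{X}_i\|^2$, to expressions that must be matched against the angular part of the BMR structure.

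The hard part will be the bookkeeping of the image-vortex terms $\propto \mathbf{X}_k/\|\mathbf{X}_k\|^2$: one must differentiate $X_k/R_k^2$ and $Y_k/R_k^2$, assemble the antisymmetric combinations entering $\{K_1, K_2\}_{\mathrm{vortex}}$ and $\{K_a, X_i\}_{\mathrm{vortex}}$, and simplify the resulting rational functions of $X_i, Y_i$ and $R_i^2$ into the precise closed forms of \eqref{bmrbracket}, keeping careful track of the cylinder radius $R$. Conceptually none of this is surprising: as the introduction's discussion of the momentum-shift map $\mathbf{p}\mapsto\mathbf{p} - e\mathbf{A}$ indicates, $\mathcal{S}$ is here the finite-dimensional shadow of the shift relating the canonical (SMBK) symplectic form to the magnetic (BMR) one, so the equality is structurally forced; the content of the theorem is precisely carrying out this algebra and confirming that $\mathbf{K}$ is exactly the shift term that does the job.
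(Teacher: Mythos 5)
Your strategy --- invert \eqref{shiftedmomentum} and check the bracket identity on coordinate functions --- is essentially Shashikanth's original explicit-calculation proof, and it is genuinely different from the route this paper takes. The paper never verifies the bracket relations entry by entry: it identifies the BMR bracket with the reduced bracket $\{\cdot,\cdot\}_{\mathrm{int}}$ of the magnetic symplectic form (Theorem~\ref{thm:bmr}, made explicit at the start of the subsection on Poisson structures), identifies the SMBK bracket with the natural bracket $\{\cdot,\cdot\}_\Sigma$ of Definition~\ref{def:natpoisson}, recognizes $\mathcal{S}$ as the shift map $\hat{\mathcal{S}}$ of \eqref{hatshift} by computing the momentum map \eqref{fullmommmap}, and then invokes Theorem~\ref{thm:shift}, whose proof in Appendix~\ref{appB} rests on the fact that the momentum shift upstairs is a symplectomorphism (Lemma~\ref{lemma:pf}). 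The Poisson property downstairs is then automatic; the rational-function algebra you plan to do by hand is exactly what this structure buys you out of, and it also explains \emph{why} the map exists.

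There are, moreover, concrete gaps in your plan. First, conventions: with the formulas exactly as quoted in the paper, your simplest check already fails. Positions pull back to themselves, so you need $\{X_i,Y_i\}_{\mathrm{vortex}} = \{X_i,Y_i\}_{\mathrm{BMR}}$, yet \eqref{vortexbracket} gives $+1/\Gamma_i$ while \eqref{bmrbracket} gives $-1/\Gamma_i$ (compare also $+J$ in \eqref{vortexeq} against $-J$ in \eqref{SMBK}, and the absence of the factor $R^2$ in the image term of \eqref{shiftedmomentum} against the $R^2$-weighted entries of \eqref{bmrbracket}). So ``carried over unchanged'' does not produce a match; before any term-by-term verification you must fix a single consistent orientation and normalization for both tables. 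Second, and more seriously, the cocycle: by the paper's own results, $\mathcal{S}=\hat{\mathcal{S}}$ pulls the BMR bracket back to the \emph{product bracket plus the non-equivariance cocycle} $\Sigma = -\Gamma\,\mathbf{e}^{\ast}_x\wedge\mathbf{e}^{\ast}_y$, and this equals the pure SMBK product bracket only when the total strength $\Gamma=\sum_i\Gamma_i$ vanishes. Since the same map cannot pull one bivector back to two different ones, the identity you intend to verify can only close when $\Gamma=0$; concretely, once signs are reconciled, the $\{V_1,V_2\}$ entries differ by the constant $\Gamma/c^2$, which is precisely the term your sketch waves through with ``the non-image part yields the constant $\Gamma/c^2$.'' Your proof must either assume $\Gamma = 0$ or state the result with the cocycle-modified bracket. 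Third, the $\Omega$-row: contrary to the pattern you establish for the $V$-brackets, the Lie--Poisson contribution does not vanish there --- from \eqref{liepoisson}, $\{A, L_x\}_{\mathrm{l.p}} = -L_y$ --- so for instance $\{\Omega\circ\mathcal{S}, V_1\circ\mathcal{S}\}$ works out to $-V_2/\mathbb{I}$, not a pure vortex-bracket expression as your ``same identities'' remark suggests; and since \eqref{bmrbracket} lists no brackets involving $\Omega$ at all, there is no stated target to match: you would first have to derive the angular part of the BMR structure (for example from Theorem~\ref{thm:bmr}) before this part of the verification is even well posed.
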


Even though this result asserts that both sets of equations are
equivalent, it leaves open the question as to \emph{why} this is so.
By re-deriving the equations of motion using symplectic reduction, not
only do we obtain both sets of equations, but the map $\mathcal{S} $ also
follows naturally.

\section{The Neumann Connection} \label{sec:neumann}

The bundle $\pi: Q \rightarrow \operatorname{SE}(2) $ is equipped with a principal
fibre bundle connection, called the \textbf{\emph{Neumann connection}} by
\cite{MontgomeryThesis}.  There are many ways of describing this
connection, but from a physical point of view, the definition using
the horizontal lift operator is perhaps most appealing.  From this point
of view, the Neumann connection is a map
\begin{equation} \label{holift}
   \mathbf{h}: Q \times T\operatorname{SE}(2)  \rightarrow T Q, \quad 
   \mathbf{h}(\varphi, g, \dot{g}) = \nabla \Phi \circ \varphi, 
\end{equation}
where $\Phi$ is the solution of the Neumann problem (\ref{neumann})
associated to $(g, \dot{g})$.  In other words, the Neumann connection
associates to each motion $(g, \dot{g})$ the corresponding induced
velocity field of the fluid, and hence encodes the effect of the body
on the fluid.  It is important to note that the Neumann connection
does not depend on the point vortex model and is valid for any
vorticity field.

Similar connections as this one have been described before (see for
example \cite{freeboundary, KhLe08}) but a complete overview of its
definition and properties seems to be lacking.  In this section, we
give an outline of the properties of the Neumann connection which are
relevant for the developments in this paper, leaving detailed proofs
for the appendix.

\paragraph{Invariance of the Kirchhoff Decomposition.}

Before introducing the Neumann connection, we prove that the velocity
potential $\Phi(g, \dot{g}; \mathbf{x})$ is left $\operatorname{SE}(2) $-invariant,
expressing the fact that the dynamics is invariant under translations
and rotations of the combined solid-fluid system.

\begin{proposition} \label{prop:invariance}
The velocity potential $\Phi$ is left $\operatorname{SE}(2) $-invariant
in the sense that 
\begin{equation} \label{invariance}
\Phi(hg, TL_h(\dot{g}); h\mathbf{x}) = \Phi(g, \dot{g}; \mathbf{x})
\end{equation}
for all $h \in \operatorname{SE}(2) $ and $(g, \dot{g}; \mathbf{x}) \in T \operatorname{SE}(2)  \times
\mathbb{R}^2$. 
\end{proposition}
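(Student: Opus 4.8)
The plan is to exploit the uniqueness of the solution to the Neumann boundary-value problem (\ref{neumann}) together with the covariance of the Laplacian and the boundary data under the rigid motion $h \in \operatorname{SE}(2)$. The statement (\ref{invariance}) says that if we rigidly transport the whole configuration — body, velocity, and evaluation point — by $h$, the velocity potential is unchanged. Since $\Phi$ is \emph{defined} as the unique solution of a Neumann problem, the natural strategy is to show that the right-hand side $\Phi(g,\dot g;\,\cdot\,)$, suitably transported, satisfies the same Neumann problem that defines the left-hand side, and then invoke uniqueness.

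**Concretely, I would** define a candidate function $\tilde\Phi(\mathbf{x}) := \Phi(g,\dot g; h^{-1}\mathbf{x})$ on the transported fluid domain $h\mathcal{F}$, and verify that $\tilde\Phi$ solves the Neumann problem associated to the transported motion $(hg, TL_h(\dot g))$. First I would check harmonicity: because $h$ acts as a Euclidean isometry (rotation plus translation), the Laplacian commutes with the pullback by $h$, so $\nabla^2 \Phi = 0$ on $\mathcal{F}$ immediately gives $\nabla^2 \tilde\Phi = 0$ on $h\mathcal{F}$. Second, I would verify the Neumann boundary condition in (\ref{neumann}). The key computation is that the spatial velocities transform correctly: under left translation by $h=(Q,\mathbf{c})$, the spatial angular and translational velocities $(\boldsymbol{\omega}, \mathbf{v})$ of the body at $g$ map to the velocities of the body at $hg$, and the relation $\boldsymbol{\omega}\times(\mathbf{x}-\mathbf{x}_0)+\mathbf{v}$ transforms by the rotation $Q$. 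Pairing this with the normal $\mathbf{n}$, which also rotates by $Q$ under the isometry, shows that $\partial\tilde\Phi/\partial n$ on $h(\partial\mathcal{F})$ equals exactly the boundary datum prescribed for the transported motion. Third, decay at infinity is preserved because $h$ is an isometry fixing the point at infinity. By uniqueness of the Neumann problem, $\tilde\Phi = \Phi(hg, TL_h(\dot g);\,\cdot\,)$, which is precisely (\ref{invariance}) after relabeling $\mathbf{x} \mapsto h\mathbf{x}$.

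**The main obstacle** I anticipate is the careful bookkeeping in the boundary-condition step: one must track how the left translation $TL_h$ acts on $\dot g$, translate that into the transformation law for the spatial velocity field $\boldsymbol{\omega}\times(\mathbf{x}-\mathbf{x}_0)+\mathbf{v}$, and confirm that the rotation acting on the velocity field and the rotation acting on the unit normal $\mathbf{n}$ combine to leave the scalar $\partial\Phi/\partial n$ invariant (the translation part drops out because only the \emph{difference} $\mathbf{x}-\mathbf{x}_0$ and the velocity relative to the body enter). A clean way to organize this is to note that everything is built from $\operatorname{SE}(2)$-invariant or $\operatorname{SE}(2)$-equivariant pieces — the metric, the Laplacian, and the normal derivative are all isometry-invariant, while the boundary velocity transforms equivariantly with the body velocity — so the whole Neumann problem is covariant under the $\operatorname{SE}(2)$-action, and invariance of $\Phi$ is a formal consequence. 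I would phrase the argument this way to keep the calculation conceptual rather than coordinate-heavy, and relegate the explicit verification of the velocity transformation to a short lemma or to the appendix as the paper suggests.
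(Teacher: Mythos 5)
Your proposal is correct, but it takes a genuinely different route from the paper. The paper's own proof is a one-line concrete verification: it invokes the standing assumption that the body is \emph{circular}, takes the explicit elementary potentials (\ref{elempot}) for the disc, and checks (\ref{invariance}) directly using the transformation properties of the inertial-frame velocities. You instead argue from the characterization of $\Phi$ as the unique decaying solution of the Neumann problem (\ref{neumann}): harmonicity is preserved because Euclidean isometries commute with the Laplacian; the boundary datum is preserved because, writing $h=(Q,\mathbf{c})$, the body velocity field evaluated at transported points and the unit normal both rotate by $Q$, so their inner product is unchanged (the translation $\mathbf{c}$ cancels since only $\mathbf{x}-\mathbf{x}_0$ enters and the center-of-mass velocity $\mathbf{v}=\dot{\mathbf{x}}_0$ becomes $Q\mathbf{v}$, with $\boldsymbol{\omega}$ unchanged); and the decay condition at infinity is isometry-invariant, so uniqueness forces the identity. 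Your argument buys generality: it nowhere uses circularity, hence establishes $\operatorname{SE}(2)$-invariance of $\Phi$ for an arbitrary body shape, which is relevant to the paper's later claim that its methods extend beyond the disc. The paper's computation buys brevity in the special case actually treated, at the cost of obscuring why the result is true. One bookkeeping point worth making explicit in your write-up: the spatial velocity $\dot{g}g^{-1}$ transforms under left translation by the adjoint action $\operatorname{Ad}_h$, which contains a translation-dependent term; the statement your boundary computation actually needs is the simpler one you implicitly use, namely that $\boldsymbol{\omega}$ is unchanged and the velocity of the center of mass maps to $Q\mathbf{v}$, so it is cleanest to phrase the equivariance in terms of $(\boldsymbol{\omega},\dot{\mathbf{x}}_0)$ rather than in terms of $\dot{g}g^{-1}$.
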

\begin{proof}
  This assertion can be proved in a number of different ways.  The
  easiest is to use the assumption that the body is circular and solve
  the equation for the elementary potentials explicitly.  Recall that
  these elementary potentials are given by (\ref{elempot}).  It is
  then straightforward to check that (\ref{invariance}) holds, using
  the transformation properties of the velocity in the inertial frame.
\end{proof}

\paragraph{The Connection One-form.}

For our purposes, it is convenient to define the Neumann connection
through its connection one-form $\mathcal{A}: TQ \rightarrow \mathfrak{X}_{\mathrm{vol}}$ given
by
\begin{equation} \label{connection}
  \mathcal{A}_{(\varphi, g)}(\dot{g}, \dot{\varphi}) = \varphi^{\ast} 
  \mathbf{u}_{\rm v},
\end{equation}
where $\mathbf{u}_{\rm v}$ is the divergence-free part in the
Helmholtz-Hodge decomposition (\ref{hhdecomp}) of the Eulerian
velocity $\mathbf{u} = \dot{\varphi} \circ \varphi^{-1}$.  A proof that
this prescription determines a well-defined connection form can be
found in the appendix, proposition~\ref{prop:conn}, where it is also shown that this prescription agrees with the horizontal lift operator (\ref{holift}).

\paragraph{The Curvature of the Neumann Connection.}

It will be convenient in what follows to have an expression for the
$\mu$-component of the curvature of the Neumann connection, where for
now $\mu$ is an arbitrary element of $\mathfrak{X}_{\mathrm{vol}}^{\ast} $.  Later on, $\mu$
will be the vorticity (\ref{vorticity}) associated to $N$ point
vortices.

The curvature of a principal fiber bundle connection is a two-form $\mathcal{B}$ whose definition is listed in the appendix.  For the  
Neumann connection we have
\begin{equation} \label{curv}
  \mathcal{B}_{(g, \varphi)}((\dot{g}_1, \dot{\varphi}_1), (
  \dot{g}_2, \dot{\varphi}_2)) = - \mathcal{A}_{(g, \varphi)}([X^H, Y^H]),
\end{equation}
where $X^H$ and $Y^H$ are horizontal vector fields on $Q$ such that
$X^H(g, \varphi) = (\dot{g}_1, \dot{\varphi}_1)$ and $Y^H(g, \varphi)
= (\dot{g}_2, \dot{\varphi}_2)$.   

\begin{proposition} \label{prop:curv} Let $(\dot{g}_1,
  \dot{\varphi}_1)$ and $(\dot{g}_2, \dot{\varphi}_2)$ be elements of
  $T_{(g, \varphi)} Q$ and denote the solutions of the Neumann problem
  (\ref{neumann}) associated to $(g, \dot{g}_1)$ resp. $(g,
  \dot{g}_2)$ by $\Phi_1$ and $\Phi_2$.  Then the $\mu$-component of
  the curvature $\mathcal{B}$ is given by
  \begin{equation} \label{curvature} \left<\mu, \mathcal{B}_{(g,
        \varphi)}((\dot{g}_1, \dot{\varphi}_1), (\dot{g}_2,
      \dot{\varphi}_2))\right> = \left\langle \! \left\langle\mu, \mathbf{d} \Phi_1 \wedge \mathbf{d}
      \Phi_2\right\rangle \!
  \right\rangle_{\mathcal{F}}-\int_{\partial \mathcal{F}} \alpha \wedge \ast(\mathbf{d}
    \Phi_1 \wedge \mathbf{d} \Phi_2),
  \end{equation}
  where $\left\langle \! \left\langle\cdot, \cdot\right\rangle \!
  \right\rangle_\mathcal{F}$ is the metric on the space of
  forms on $\mathcal{F}$, induced by the Euclidian metric on $\mathcal{F}$.
\end{proposition}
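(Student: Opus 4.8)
The plan is to start from the structure equation \eqref{curv}, which already rewrites the curvature as $\mathcal{B}(X,Y)=-\mathcal{A}([X^H,Y^H])$ for horizontal lifts. Since the curvature is tensorial and sees only the horizontal parts of its arguments, I may assume $(\dot g_i,\dot\varphi_i)$ are themselves horizontal, so that $\dot\varphi_i=\nabla\Phi_i\circ\varphi$ with $\Phi_i$ the Neumann potential attached to $(g,\dot g_i)$, and I may extend to any convenient horizontal vector fields $X^H,Y^H$ agreeing with these at $(g,\varphi)$. The entire computation is then transported to the Eulerian picture, where by \eqref{connection} the form $\mathcal{A}$ acts by extracting the divergence-free, boundary-parallel part $\mathbf{u}_{\rm v}$ of a spatial velocity field.

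First I would identify the spatial velocity field carried by the bracket $[X^H,Y^H]$. Differentiating $\nabla\Phi_2\circ\varphi$ along $X^H$ (and symmetrically) produces two kinds of terms: the convective terms, which assemble into the Jacobi--Lie bracket $[\nabla\Phi_1,\nabla\Phi_2]$ of the two Eulerian fields, and terms arising from the dependence of the potentials on $g$. Because the $g$-derivative of a harmonic function is again harmonic, the latter are \emph{gradients of harmonic functions}; they lie in the horizontal distribution of the Neumann connection and are therefore annihilated by $\mathcal{A}$. Thus, up to the sign dictated by the right $\mathrm{Diff}_{\mathrm{vol}}$-action and the convention that the bracket on $\mathfrak{X}_{\mathrm{vol}}$ is minus the Jacobi--Lie bracket, only $[\nabla\Phi_1,\nabla\Phi_2]$ survives after applying $\mathcal{A}$.

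The key pointwise identity is that, since $\Phi_1,\Phi_2$ are harmonic, their gradients are simultaneously divergence- and curl-free, and a short calculation using equality of mixed partials (Cauchy--Riemann-type relations) yields the closed form $[\nabla\Phi_1,\nabla\Phi_2]=\nabla^{\perp}\!\big(\ast(\mathbf{d}\Phi_1\wedge\mathbf{d}\Phi_2)\big)$, equivalently $[\nabla\Phi_1,\nabla\Phi_2]^{\flat}=\ast\,\mathbf{d}\,\ast(\mathbf{d}\Phi_1\wedge\mathbf{d}\Phi_2)$, where $\nabla^{\perp}$ is the skew gradient. In particular the bracket is already divergence-free, so $\mathcal{A}$ only subtracts the harmonic gradient $\nabla\chi$ needed to restore boundary-parallelism (with normal data $\partial_n\chi=[\nabla\Phi_1,\nabla\Phi_2]\cdot\mathbf{n}$ on $\partial\mathcal{F}$). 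Writing the resulting field of $\mathfrak{X}_{\mathrm{vol}}$ as $\nabla^{\perp}\psi_{\rm v}$ with $\psi_{\rm v}|_{\partial\mathcal{F}}=0$, pairing with $\mu=\mathbf{d}\alpha$ and integrating by parts converts $\langle\mu,\mathcal{B}\rangle$ into the bulk integral $\int_{\mathcal{F}} w\,\psi_{\rm v}$, with $w$ the scalar vorticity of $\mu$; the boundary contribution of this integration by parts vanishes precisely because $\psi_{\rm v}$ is constant on $\partial\mathcal{F}$.

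Finally I would reinstate $J:=\ast(\mathbf{d}\Phi_1\wedge\mathbf{d}\Phi_2)$, the stream function of the full bracket, through $\psi_{\rm v}=J-\tilde\chi$, where $\tilde\chi$ is the harmonic conjugate of the Hodge correction $\chi$. The $J$-part reproduces exactly $\left\langle\!\left\langle\mu,\mathbf{d}\Phi_1\wedge\mathbf{d}\Phi_2\right\rangle\!\right\rangle_{\mathcal{F}}=\int_{\mathcal{F}}\mathbf{d}\alpha\wedge\ast(\mathbf{d}\Phi_1\wedge\mathbf{d}\Phi_2)$, while a Green/Stokes identity recasts the $\tilde\chi$-part as the boundary integral, giving the stated $-\int_{\partial\mathcal{F}}\alpha\wedge\ast(\mathbf{d}\Phi_1\wedge\mathbf{d}\Phi_2)$. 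I expect the main obstacle to be exactly this last boundary-tracking step: in Montgomery's closed-manifold setting the induced fluid field is automatically boundary-parallel and only the bulk term appears, whereas here the nonzero normal velocity on $\partial\mathcal{F}$ forces the Hodge correction $\nabla\chi$, and one must verify that its pairing with $\mu$ assembles into precisely $-\int_{\partial\mathcal{F}}\alpha\wedge\ast(\mathbf{d}\Phi_1\wedge\mathbf{d}\Phi_2)$ and not some other boundary functional. A secondary but genuinely delicate point is the bookkeeping of signs (from the right $\mathrm{Diff}_{\mathrm{vol}}$-action and the $\mathfrak{X}_{\mathrm{vol}}$-bracket convention) together with the justification, in the infinite-dimensional embedding manifold, that the $g$-dependent terms in $[X^H,Y^H]$ are genuinely horizontal.
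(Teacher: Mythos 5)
Your proof is correct, but it takes a genuinely different route in its two key technical steps, and it is worth comparing the two. Both arguments share the same skeleton: write $\left<\mu,\mathcal{B}\right> = -\left<\mu,\mathcal{A}([X^H,Y^H])\right>$, argue that only the commutator of the gradient fields matters, identify that commutator with a (co)exact object built from $\mathbf{d}\Phi_1\wedge\mathbf{d}\Phi_2$, and integrate by parts to split bulk and boundary contributions. They diverge as follows. First, your pointwise identity $[\nabla\Phi_1,\nabla\Phi_2]=\nabla^{\perp}\big(\ast(\mathbf{d}\Phi_1\wedge\mathbf{d}\Phi_2)\big)$ is the planar, Euclidean specialization of what the paper extracts from Vaisman's codifferential formula (\ref{codiff}), namely $[(\mathbf{d}\Phi_1)^\flat,(\mathbf{d}\Phi_2)^\flat]^\sharp=-\delta(\mathbf{d}\Phi_1\wedge\mathbf{d}\Phi_2)$ for harmonic $\Phi_i$; the paper's version holds on any Riemannian manifold in any dimension, which is precisely what supports its claim of generalizing Montgomery's formula beyond the 2D Euclidean setting. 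Second, and more substantively, the paper never performs the Hodge projection that you carry out explicitly: it chooses the representative $\alpha=\mathbf{u}_{\rm v}^\flat$ with $\mathbf{u}_{\rm v}$ divergence-free and tangent to $\partial\mathcal{F}$, so that $\alpha$ is $L_2$-orthogonal to all gradients; the pairing with $\mu$ is then blind both to the divergence-free projection and to the $g$-derivative terms you worry about, and the split into $\left\langle\!\left\langle\mathbf{d}\alpha,\mathbf{d}\Phi_1\wedge\mathbf{d}\Phi_2\right\rangle\!\right\rangle_{\mathcal{F}}$ plus the boundary integral falls out in one stroke from the adjointness of $\mathbf{d}$ and $\delta$ on a manifold with boundary. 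Your route --- subtract the harmonic correction $\nabla\chi$, pass to the stream function $\psi_{\rm v}=J-\tilde\chi$ (with $J:=\ast(\mathbf{d}\Phi_1\wedge\mathbf{d}\Phi_2)$) normalized to vanish on $\partial\mathcal{F}$, and recover the boundary term from the boundary values of the harmonic conjugate $\tilde\chi$ --- does close: since $\tilde\chi=J$ on $\partial\mathcal{F}$ and $\int_{\mathcal{F}}\alpha\wedge\mathbf{d}\tilde\chi=\pm\int_{\mathcal{F}}\mathbf{u}_{\rm v}\cdot\nabla\chi\,d\mathbf{x}=0$ by the same orthogonality, your $\tilde\chi$-term is exactly $-\int_{\partial\mathcal{F}}\alpha\wedge\ast(\mathbf{d}\Phi_1\wedge\mathbf{d}\Phi_2)$. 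But this is longer, confined to two dimensions, and the delicate boundary bookkeeping you flag as the main obstacle is precisely what the paper's choice of $\alpha$ plus the $\mathbf{d}$--$\delta$ adjointness is designed to bypass. On the other hand, your handling of the $g$-dependent terms in $[X^H,Y^H]$ --- noting they are gradients of harmonic functions and hence annihilated by $\mathcal{A}$ --- is more careful than the paper's, which simply asserts that the fluid component of the bracket is the commutator of the gradients and hides the rest in unspecified terms.
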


\begin{proof} Let $\mu$ be equal to $\varphi^{\ast}  \mathbf{d} \alpha$, and
  pick $\alpha$ such that $\alpha = \mathbf{u}_{\rm v}^\flat$, where
  $\mathbf{u}_{\rm v}$ is a divergence-free vector field on $\mathcal{F}$
  tangent to $\partial \mathcal{F}$.  For example, in the case of the
  vorticity due to $N$ point vortices, $\mathbf{u}_{\rm v}$ is the
  velocity field due to the vortices and their images (see
  \cite{Sa1992}).  The calculation of the curvature involves computing
  the Jacobi-Lie bracket of two horizontal vector fields and taking
  the divergence-free part of the result.  Because of the special form
  of $\alpha$ we can dispense with the latter step, since $\alpha$ is
  chosen to be $L_2$-orthogonal to gradient vector fields.  Therefore,
  the $\mu$-component of the curvature is given by
  \begin{align*}
    \left<\mu, \mathcal{B}_{(g, \varphi)}((\dot{g}_1, \dot{\varphi}_1),
      (\dot{g}_2, \dot{\varphi}_2))\right>  & = - \left<\mu, \mathcal{A}_{(g,
        \varphi)}([X^H, Y^H])\right>
    \\
    & = - \int_{\mathcal{F}_0} (\varphi^{\ast}  \alpha)([X^H, Y^H]) \, \eta_0,
  \end{align*}
where the bracket on the left-hand side is the Jacobi-Lie bracket,
which is the negative of the usual commutator of vector fields.  

The bracket can be made more explicit by noting that (as vector fields
on $Q$)
\[
  [X^H, Y^H](g, \varphi) = ([(\mathbf{d} \Phi_1)^\flat, (\mathbf{d} \Phi_2)^\flat] \circ
  \varphi, \cdots),
\]
where the dots denote a term in $T_g \operatorname{SE}(2) $ whose explicit form
doesn't matter.  The curvature then becomes
\[
\left<\mu, \mathcal{B}_{(g, \varphi)}((\dot{g}_1, \dot{\varphi}_1),
  (\dot{g}_2, \dot{\varphi}_2))\right> = - \int_\mathcal{F} \alpha(
[(\mathbf{d}\Phi_1)^\flat, (\mathbf{d}\Phi_2)^\flat]) \, d \mathbf{x}.
\]

The remainder of the proof relies on the following formula for the
codifferential of a wedge product (\cite[formula~1.34]{Vaisman}):
\begin{equation} \label{codiff}
  \delta (\alpha \wedge \beta) = \delta \alpha \wedge \beta +
    (-1)^p \alpha \wedge \delta \beta - [\alpha^\flat,
    \beta^\flat]^\sharp, 
\end{equation}
where $p = \deg \alpha$.  Applying (\ref{codiff}) with $\alpha = \mathbf{d}
\Phi_1$ and $\beta = \mathbf{d} \Phi_2$ gives
\[
  [(\mathbf{d}\Phi_1)^\flat, (\mathbf{d} \Phi_2)^\flat]^\sharp = 
    \Delta \Phi_1 \mathbf{d} \Phi_2 - \Delta \Phi_2 \mathbf{d} \Phi_1 - 
    \delta(\mathbf{d} \Phi_1 \wedge \mathbf{d} \Phi_2), 
\]
where $\Delta$ is the Laplace-Beltrami operator on differential
forms.  Since both $\Phi_1$ and $\Phi_2$ are harmonic, the first two
terms of the right-hand side vanish.  Consequently, the
$\mu$-component of the curvature becomes
\begin{align*}
  \left<\mu, \mathcal{B}_{(g, \varphi)}((\dot{g}_1, \dot{\varphi}_1),
    (\dot{g}_2, \dot{\varphi}_2))\right> & = \left\langle \! \left\langle\alpha, \delta(\mathbf{d} \Phi_1
    \wedge \mathbf{d} \Phi_2)\right\rangle \!
  \right\rangle_{\mathcal{F}} \\ & = \left\langle \! \left\langle\mathbf{d} \alpha, \mathbf{d} \Phi_1 \wedge \mathbf{d}
    \Phi_2\right\rangle \!
  \right\rangle_{\mathcal{F}} - \int_{\partial \mathcal{F}} \alpha \wedge \ast(\mathbf{d} \Phi_1
  \wedge \mathbf{d} \Phi_2),
\end{align*}
using the adjointness of $\mathbf{d}$ and $\delta$ for manifolds with boundary (this is a simple consequence of Stokes' theorem).
\end{proof}

In traditional fluid mechanics notation, formula (\ref{curvature})
becomes
\[
  \left<\mu, \mathcal{B}\right> = \int_\mathcal{F} (\nabla \times \mathbf{u}_V)
  \cdot (\nabla \Phi_1 \times \nabla \Phi_2) d x 
    - \int_{\partial \mathcal{F}} \mathbf{u}_V \cdot (\mathbf{n} \times (\nabla \Phi_1
    \times \nabla \Phi_2)) d A,
\]
a formula first derived in \cite{MontgomeryThesis}.

\section{Reduction with Respect to the Diffeomorphism
  Group} \label{sec:diff} 

As mentioned in the introduction, the fluid-solid system on the space
$Q \subset \operatorname{SE}(2)  \times \operatorname{Emb}(\mathcal{F}_0, \mathbb{R}^2)$ is invariant under the
particle relabelling group $\mathrm{Diff}_{\mathrm{vol}}$.  We now perform symplectic reduction to eliminate that symmetry.  In order to do so, we need to fix a value of the momentum map associated to the $\mathrm{Diff}_{\mathrm{vol}}$-symmetry.  From a physical point of view, this boils down to fixing the vorticity of the system; it is at this point that the assumption is used that the vorticity is concentrated in $N$ point vortices.

Before tackling symplectic reduction in the context of the fluid-solid system, we
first give a general overview of cotangent bundle reduction following
\cite{MarsdenHamRed}.  Roughly speaking, applying symplectic reduction
to a cotangent bundle yields a space which is diffeomorphic to the
product of a reduced cotangent bundle and a co-adjoint orbit of the
group.  The dynamics on the reduced space is governed by a reduced
Hamiltonian and a magnetic symplectic form: the reduced symplectic
form is the sum of the canonical symplectic forms on the individual
factors and an additional magnetic term.

As we shall see below, in the case of the fluid-solid system, the
reduced phase space is equal to $T^{\ast}  \operatorname{SE}(2)  \times \mathbb{R}^{2N}$, where the
first factor describes the rigid body, while the second factor
determines the configuration of the vortices.  At first sight, it may
therefore appear that the intermediate fluid is completely gone.  Yet,
the vortices act on the rigid body, and vice versa, 
through the
surrounding fluid.  The answer to this apparent
contradiction is that the effect of the fluid is concentrated in the
magnetic symplectic form, for which we derive a convenient expression
below.

\subsection{Cotangent Bundle Reduction: Review}

In this section, we collect some relevant results from
\cite{MarsdenHamRed}. We consider a manifold $Q$ on which a Lie group
$G$, with Lie algebra $\mathfrak{g}$, acts from the right and we denote the
action by $\sigma: Q \times G \rightarrow Q$.  In addition, we assume
that we are given a connection one-form $\mathcal{A} : TQ \rightarrow
\mathfrak{g}$ with curvature two-form $\mathcal{B}$.  In the rest of this paper, $Q$ will be the configuration
space of the solid-fluid system, while the structure group $G$ will be
$\mathrm{Diff}_{\mathrm{vol}}$, and the connection $\mathcal{A}$ the Neumann connection.

\paragraph{The Curvature as a Two-form on the Reduced Space.}

Let $\mu$ be an element of $\mathfrak{g}^{\ast} $, and denote its isotropy
subgroup under the co-adjoint action by $G_\mu$:
\[
  G_\mu = \left\{ g \in G : \mathrm{CoAd}_g \mu = \mu \right\}.
\]
Consider the contraction $\left<\mu, \mathcal{B}\right>$ of $\mu$ with the
curvature $\mathcal{B}$: due to the $G$-equivariance and the fact that
$\mathcal{B}$ vanishes on vertical vectors (see (\ref{props})), we may show
that $\left<\mu, \mathcal{B}\right>$ is a $G_\mu$-invariant form.

\begin{proposition}
  The $\mu$-component of the curvature $\mathcal{B}$ is a $G_\mu$-invariant
  two-form:
  \begin{enumerate}
    \item $\sigma^{\ast} _g \left<\mu, \mathcal{B}\right> = \left<\mu, \mathcal{B}\right>$, for all
      $g \in G_\mu$; 
    \item $i_{\xi_Q} \left<\mu, \mathcal{B}\right> = 0$ for all $\xi \in \mathfrak{g}_\mu$.
  \end{enumerate}
  Hence, $\left<\mu, \mathcal{B}\right>$ induces a two-form $\bar{\mathcal{B}}_\mu$ on
  $Q/G_\mu$ such that 
  \[
    \pi_{Q, G_\mu}^{\ast}  \bar{\mathcal{B}}_\mu = \left<\mu, \mathcal{B}\right>,
  \]
where $\pi_{Q, G_\mu} : Q \rightarrow Q/G_\mu$ is the quotient map.
\end{proposition}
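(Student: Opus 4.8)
The plan is to recognize this proposition as an instance of the standard criterion characterizing \emph{basic} forms on a principal bundle: a differential form on the total space descends to the base precisely when it is both invariant under the structure group and horizontal. Parts~(1) and~(2) establish exactly these two conditions for the two-form $\langle \mu, \mathcal{B}\rangle$ relative to the principal $G_\mu$-bundle $\pi_{Q, G_\mu} : Q \rightarrow Q/G_\mu$, and part~(3) is then the conclusion of that criterion. The only inputs needed are the two structural properties of a curvature form recorded in (\ref{props}): its $G$-equivariance, $\sigma_g^{\ast} \mathcal{B} = \mathrm{Ad}_{g^{-1}} \circ \mathcal{B}$, and its horizontality, $i_{\xi_Q} \mathcal{B} = 0$ for every $\xi \in \mathfrak{g}$.

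For part~(1), I would fix $g \in G_\mu$ and pull back the scalar two-form obtained by contracting $\mathcal{B}$ with $\mu$. Since pairing with the fixed element $\mu \in \mathfrak{g}^{\ast}$ is linear and commutes with pullback by $\sigma_g$, one has $\sigma_g^{\ast} \langle \mu, \mathcal{B}\rangle = \langle \mu, \sigma_g^{\ast} \mathcal{B}\rangle$. Inserting the equivariance of $\mathcal{B}$ gives $\langle \mu, \mathrm{Ad}_{g^{-1}} \mathcal{B}\rangle$, and moving the adjoint onto $\mu$ by the definition of the coadjoint action turns this into $\langle \mathrm{CoAd}_g \mu, \mathcal{B}\rangle$. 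Because $g$ lies in the isotropy group $G_\mu$, we have $\mathrm{CoAd}_g \mu = \mu$, and the right-hand side collapses to $\langle \mu, \mathcal{B}\rangle$, proving invariance. The one point requiring care here is bookkeeping of the left/right conventions so that the isotropy condition $\mathrm{CoAd}_g \mu = \mu$ is exactly what cancels the adjoint factor; with the right action and coadjoint conventions fixed earlier this matches by construction.

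Part~(2) is immediate: for $\xi \in \mathfrak{g}_\mu \subset \mathfrak{g}$, the fundamental vector field $\xi_Q$ is vertical, so horizontality of $\mathcal{B}$ gives $i_{\xi_Q} \mathcal{B} = 0$, whence $i_{\xi_Q} \langle \mu, \mathcal{B}\rangle = \langle \mu, i_{\xi_Q}\mathcal{B}\rangle = 0$. With invariance and horizontality in hand, part~(3) follows from the basic-form criterion: a $G_\mu$-invariant, horizontal form on $Q$ is the pullback of a unique form $\bar{\mathcal{B}}_\mu$ on the quotient $Q/G_\mu$, characterized by $\pi_{Q, G_\mu}^{\ast} \bar{\mathcal{B}}_\mu = \langle \mu, \mathcal{B}\rangle$. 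The genuine obstacle is not any of these formal manipulations but the hypothesis underlying the descent step, namely that $\pi_{Q, G_\mu}$ really is a principal submersion so that the basic-form criterion applies --- this requires the $G_\mu$-action on $Q$ to be free and proper. Since $G = \mathrm{Diff}_{\mathrm{vol}}$ is infinite-dimensional, the rigorous justification belongs to the functional-analytic framework deferred to \cite{EbinMarsden70}; at the formal level adopted here it is taken for granted.
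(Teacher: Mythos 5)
Your proof is correct and follows essentially the same route as the paper's: the identical chain $\sigma_g^{\ast}\left<\mu,\mathcal{B}\right> = \left<\mu,\mathrm{Ad}_{g^{-1}}\mathcal{B}\right> = \left<\mathrm{CoAd}_g\mu,\mathcal{B}\right> = \left<\mu,\mathcal{B}\right>$ for invariance, the vanishing of $\mathcal{B}$ on vertical vectors (which, as you and the paper both note, holds for all $\xi \in \mathfrak{g}$, not just $\mathfrak{g}_\mu$) for horizontality, and descent via the basic-form criterion. Your added caveat about freeness/properness of the $G_\mu$-action in the infinite-dimensional setting is a fair remark but does not change the argument.
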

\begin{proof}
For the first property, we have 
\[
  \sigma^{\ast} _g \left<\mu, \mathcal{B}\right>  = \left<\mu, \sigma_g^{\ast}  \mathcal{B}\right>
      = \left<\mu, \mathrm{Ad}_{g^{-1}}\mathcal{B}\right>
      = \left<\mathrm{CoAd}_g(\mu), \mathcal{B}\right> 
      = \left<\mu, \mathcal{B}\right>
\]
if $g \in G_\mu$.  The second item follows from the corresponding
property for $\mathcal{B}$.  Actually, more is true: $i_{\xi_Q} \left<\mu,
  \mathcal{B}\right> = 0$ for all $\xi \in \mathfrak{g}$, not just in $\mathfrak{g}_\mu$.
\end{proof}

\paragraph{Cotangent Bundle Reduction.}

The first stage in reducing the phase space consists of dividing out
the particle relabelling symmetry.  To this end, we use the framework
of \textbf{\emph{cotangent bundle reduction}} to construct the reduced phase
space.  The framework for cotangent bundle reduction outlined in \cite{MarsdenHamRed} allows us to write down the
reduced phase space and the modified symplectic form.  We quote: 

\begin{theorem} \label{thm:cotred} (Marsden \emph{et al.} [theorem~2.2.1])
\begin{enumerate} 
\item There exists a symplectic imbedding $\varphi_\mu$ of the reduced
  phase space $(T^{\ast}  Q)_\mu$ into the cotangent bundle $T^{\ast} 
  (Q/G_\mu)$ with the shifted symplectic structure $\Omega_\mathcal{B} :=
  \Omega_{\mathrm{can}} - B_\mu$.

\item The image of $\varphi_\mu$ is the vector subbundle $[T \pi_{Q,
    G_\mu}(V)]^\circ$ of $T^{\ast}  (Q/G_\mu)$, where $V \subset TQ$ is the
  vector subbundle consisting of vectors tangent to the $G$-orbits in
  $Q$, and $^\circ$ denotes the annihilator relative to the natural
  duality pairing between $T(Q/G_\mu)$ and $T^{\ast} ( Q/G_\mu)$.
\end{enumerate}
\end{theorem}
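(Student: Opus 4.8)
The plan is to construct the imbedding $\varphi_\mu$ explicitly from the connection and then verify the two assertions by tracking the tautological one-form through a fibre translation. Write $\mathcal{A}_\mu := \langle \mu, \mathcal{A}\rangle \in \Omega^1(Q)$ for the $\mu$-component of the connection, and let $J\colon T^{\ast} Q \rightarrow \mathfrak{g}^{\ast} $ be the equivariant momentum map for the cotangent-lifted $G$-action, $\langle J(\alpha_q),\xi\rangle = \langle \alpha_q, \xi_Q(q)\rangle$. First I would record two facts about $\mathcal{A}_\mu$: it is $G_\mu$-invariant (by the same computation as in the preceding proposition, using $\mathrm{CoAd}_g\mu=\mu$ for $g\in G_\mu$), and, because $\mathcal{A}(\xi_Q)=\xi$, the section $q\mapsto \mathcal{A}_\mu(q)$ takes values in $J^{-1}(\mu)$. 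Consequently the fibrewise translation $t_{\mathcal{A}_\mu}\colon \alpha_q\mapsto \alpha_q+\mathcal{A}_\mu(q)$ restricts to a $G_\mu$-equivariant diffeomorphism $J^{-1}(0)\rightarrow J^{-1}(\mu)$, with inverse given by subtraction of $\mathcal{A}_\mu$.

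Next I would identify the ``horizontal'' level set. A covector $\beta_q$ lies in $J^{-1}(0)$ precisely when it annihilates the full vertical space $V_q=\{\xi_Q(q):\xi\in\mathfrak{g}\}$; since the $G_\mu$-orbit directions $V^\mu_q:=\{\xi_Q(q):\xi\in\mathfrak{g}_\mu\}$ satisfy $V^\mu_q\subseteq V_q$, such a $\beta_q$ descends to a covector on $T_qQ/V^\mu_q=T_{\pi_{Q,G_\mu}(q)}(Q/G_\mu)$. This yields a natural $G_\mu$-invariant map $J^{-1}(0)\rightarrow T^{\ast} (Q/G_\mu)$, which descends to $J^{-1}(0)/G_\mu$; precomposing with the descent of $t_{\mathcal{A}_\mu}^{-1}$ defines $\varphi_\mu$ on $(T^{\ast} Q)_\mu=J^{-1}(\mu)/G_\mu$. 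Because the descended covector annihilates all of $V$ (not merely $V^\mu$), its image is exactly the set of covectors killing $T\pi_{Q,G_\mu}(V)$, i.e.\ the subbundle $[T\pi_{Q,G_\mu}(V)]^\circ$; this establishes assertion~(2), and since each ingredient is a diffeomorphism onto its image, $\varphi_\mu$ is an imbedding.

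It then remains to check that $\varphi_\mu$ is symplectic. Here I would use the standard identity for the tautological one-form under fibre translation, $t_{\mathcal{A}_\mu}^{\ast} \theta_{\mathrm{can}}=\theta_{\mathrm{can}}+\pi_Q^{\ast} \mathcal{A}_\mu$, which gives $t_{\mathcal{A}_\mu}^{\ast} \Omega_{\mathrm{can}}=\Omega_{\mathrm{can}}-\pi_Q^{\ast} \mathbf{d}\mathcal{A}_\mu$, where $\pi_Q\colon T^{\ast} Q\rightarrow Q$. On the zero level set the relevant directions are horizontal, and there the structure equation $\mathcal{B}=\mathbf{d}\mathcal{A}+\frac{1}{2}[\mathcal{A},\mathcal{A}]$ reduces to $\mathbf{d}\mathcal{A}=\mathcal{B}$, since $[\mathcal{A},\mathcal{A}]$ vanishes on horizontal vectors; hence $\langle\mu,\mathbf{d}\mathcal{A}\rangle$ restricts to $\langle\mu,\mathcal{B}\rangle$, which by the preceding proposition equals $\pi_{Q,G_\mu}^{\ast} \bar{\mathcal{B}}_\mu$. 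Chasing this through the quotient maps — comparing the reduced form $\Omega_\mu$, characterised by $\pi_\mu^{\ast} \Omega_\mu=i_\mu^{\ast} \Omega_{\mathrm{can}}$ with $i_\mu\colon J^{-1}(\mu)\hookrightarrow T^{\ast} Q$ and $\pi_\mu\colon J^{-1}(\mu)\rightarrow (T^{\ast} Q)_\mu$, against the pullback of $\Omega_\mathcal{B}=\Omega_{\mathrm{can}}-B_\mu$, where $B_\mu$ is the pullback of $\bar{\mathcal{B}}_\mu$ to $T^{\ast} (Q/G_\mu)$ — shows the two forms agree.

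The main obstacle is precisely this last diagram chase. The delicate point is the bookkeeping between the full $G$-vertical bundle $V$ (which the shifted covectors annihilate, and which governs the image subbundle) and the $G_\mu$-orbit directions $V^\mu$ that are actually quotiented out; one must verify that the non-horizontal and Lie-bracket pieces of $\mathbf{d}\mathcal{A}_\mu$ contribute nothing to the reduced form, so that $\mathbf{d}\mathcal{A}_\mu$ descends on $(T^{\ast} Q)_\mu$ to exactly the magnetic two-form $\bar{\mathcal{B}}_\mu$ with the correct sign. Checking $G_\mu$-equivariance of all the maps, and that $G_\mu$ acts freely and properly on $J^{-1}(\mu)$ so that every quotient is a smooth manifold, rounds out the argument.
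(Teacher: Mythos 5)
Your proposal cannot be compared against a proof in the paper, because the paper offers none: the theorem is quoted verbatim from \cite{MarsdenHamRed} (Theorem~2.2.1). Measured against the standard proof of that result, your skeleton --- shift $J^{-1}(\mu)$ onto $J^{-1}(0)$ by the one-form $\alpha_\mu := \left<\mu,\mathcal{A}\right>$ (your $\mathcal{A}_\mu$), descend covectors annihilating $V$ to $T^{\ast}(Q/G_\mu)$, and identify the image --- is exactly the right one, and everything through assertion~(2) is correct. The genuine gap is in the symplectic comparison: you take the magnetic term to be the pullback of $\bar{\mathcal{B}}_\mu$, the descent of the curvature component $\left<\mu,\mathcal{B}\right>$ alone, whereas the theorem's $B_\mu$ is the pullback of $\beta_\mu$, the descent of the \emph{full} two-form $\mathbf{d}\alpha_\mu = \left<\mu,\mathcal{B}\right> - \left<\mu,[\mathcal{A},\mathcal{A}]\right>$ (equation~(\ref{cartan})). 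Your justification for discarding the bracket term --- ``on the zero level set the relevant directions are horizontal'' --- conflates horizontal \emph{covectors} with horizontal \emph{tangent directions}. The level set $J^{-1}(0) = V^\circ$ is a vector subbundle of $T^{\ast}Q$ sitting over all of $Q$, so tangent vectors to $J^{-1}(0)$ project under $T\pi_Q$ onto \emph{all} of $TQ$; hence the pullback of $\pi_Q^{\ast}\left<\mu,[\mathcal{A},\mathcal{A}]\right>$ to $J^{-1}(0)$ does not vanish, and its descent to $Q/G_\mu$ is nonzero in general (it is annihilated only by $\mathfrak{g}_\mu$-directions, since $\mathrm{ad}^{\ast}_\xi \mu = 0$ for $\xi \in \mathfrak{g}_\mu$).

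This omission is fatal, not cosmetic, and the verification you postpone to the end would fail rather than succeed. Take $Q = G$ with $\mathcal{A}$ the Maurer--Cartan form: then $\mathcal{B} = 0$, so your magnetic term vanishes, and your version of assertion~(1) would say that $(T^{\ast}G)_\mu \cong \mathcal{O}_\mu$, carrying its Kostant--Kirillov--Souriau symplectic form, embeds \emph{symplectically} onto the zero section of $\bigl(T^{\ast}(G/G_\mu), \Omega_{\mathrm{can}}\bigr)$ --- impossible, since the zero section is Lagrangian. In that example the entire reduced symplectic form comes from the discarded term $\left<\mu,[\mathcal{A},\mathcal{A}]\right>$. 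The repair is to never split $\mathbf{d}\alpha_\mu$ at all. Write $\varphi_\mu \circ \pi_\mu = \psi \circ s$, where $s = t_{-\alpha_\mu}: J^{-1}(\mu) \rightarrow J^{-1}(0)$, $\psi: J^{-1}(0) \rightarrow T^{\ast}(Q/G_\mu)$ is your descent map, and $i_0$, $i_\mu$ are the inclusions into $T^{\ast}Q$. Two elementary identities, namely $\psi^{\ast}\theta^{Q/G_\mu}_{\mathrm{can}} = i_0^{\ast}\theta^{T^{\ast}Q}_{\mathrm{can}}$ (the descent map matches tautological one-forms) and $\psi^{\ast}B_\mu = i_0^{\ast}\pi_Q^{\ast}\mathbf{d}\alpha_\mu$ (from $\pi_{Q/G_\mu}\circ\psi = \pi_{Q,G_\mu}\circ\pi_Q\circ i_0$ and $\pi_{Q,G_\mu}^{\ast}\beta_\mu = \mathbf{d}\alpha_\mu$), then give
\begin{align*}
\pi_\mu^{\ast}\,\varphi_\mu^{\ast}\bigl(\Omega_{\mathrm{can}} - B_\mu\bigr)
&= s^{\ast}\psi^{\ast}\Omega^{Q/G_\mu}_{\mathrm{can}} - s^{\ast}\psi^{\ast}B_\mu \\
&= i_\mu^{\ast}\,t_{-\alpha_\mu}^{\ast}\Omega^{T^{\ast}Q}_{\mathrm{can}} - i_\mu^{\ast}\pi_Q^{\ast}\mathbf{d}\alpha_\mu \\
&= i_\mu^{\ast}\bigl(\Omega^{T^{\ast}Q}_{\mathrm{can}} + \pi_Q^{\ast}\mathbf{d}\alpha_\mu\bigr) - i_\mu^{\ast}\pi_Q^{\ast}\mathbf{d}\alpha_\mu
= i_\mu^{\ast}\Omega^{T^{\ast}Q}_{\mathrm{can}} = \pi_\mu^{\ast}\Omega_\mu,
\end{align*}
using $i_0 \circ s = t_{-\alpha_\mu}\circ i_\mu$ and $\pi_Q \circ t_{-\alpha_\mu} = \pi_Q$; since $\pi_\mu$ is a surjective submersion, $\varphi_\mu^{\ast}(\Omega_{\mathrm{can}} - B_\mu) = \Omega_\mu$. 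The two copies of the full $\pi_Q^{\ast}\mathbf{d}\alpha_\mu$ --- one produced by the fibre translation, one by the magnetic term --- cancel identically, and no horizontality argument is needed anywhere.
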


Here $(T^{\ast}  Q)_\mu$ is the reduced phase space $J^{-1}(\mu)/G_\mu$,
where $G_\mu$ is the isotropy subgroup of $\mu$.  The two-form $B_\mu$
is called the \textbf{\emph{magnetic two-form}}, and is defined as $B_\mu =
\pi_{Q/G_\mu}^{\ast}  \beta_\mu$, where $\pi_{Q/G_\mu}: T^{\ast} (Q/G_\mu)
\rightarrow Q/G_\mu$ is the cotangent bundle projection and
$\beta_\mu$ is a two-form on $Q/G_\mu$.

To define $\beta_\mu$, consider the one-form $\alpha_\mu := \left<
  \mu, \mathcal{A} \right>$ on $Q$.  One can show that $\mathbf{d} \alpha_\mu$ (but
not $\alpha_\mu$ itself) is $G_\mu$-invariant, and the induced
two-form on $Q/G_\mu$ is precisely $\beta_\mu$, or in other words
\[
  \pi^{\ast} _{Q, G_\mu} \beta_\mu = \mathbf{d} \alpha_\mu,
\]
where $\pi_{Q, G_\mu}: Q \rightarrow Q/G_\mu$ is the quotient map.
The calculation of $\mathbf{d} \alpha_\mu$ is greatly facilitated by using the
Cartan structure equation (\cite{KN1}), which allows us to rewrite $\mathbf{d}
\alpha_\mu$ as 
\begin{equation} \label{cartan} \mathbf{d} \alpha_\mu = \left< \mu, \mathcal{B}
  \right> - \left< \mu, [\mathcal{A}, \mathcal{A}] \right>.
\end{equation}
Observe the sign difference on the right-hand side with
\cite{MarsdenHamRed}, which is due to the fact that we take $G$ to be
acting on $Q$ from the \emph{right}.

\subsection{The Reduced Phase Space}

Theorem~\ref{thm:cotred} provides us with an explicit prescription for
the reduced phase space.  Recall that the diffeomorphism group
$\mathrm{Diff}_{\mathrm{vol}}$ acts on the dual Lie algebra $\mathbf{d} \Omega^1(\mathcal{F}_0)$
through pull-back.  In particular, if $\mu$ represents the vorticity
due to $N$ point vortices as in (\ref{vorticity}), then
\[
  \phi^{\ast}  \mu = \sum_{i = 1}^N \Gamma_i \delta(\mathbf{\bar{x}} -
  \phi^{-1}(\mathbf{\bar{x}}_i)) \, \mathbf{d} \bar{x} \wedge \mathbf{d} \bar{y}.
\]
In other words, the diffeomorphism group acts on the space of point
vortices by simply moving the vortices.  It also follows that
the isotropy subgroup of $\mu$ consists of all diffeomorphisms $\phi$
for which the vortex reference locations are fixed:
$\phi(\mathbf{\bar{x}}_i) = \mathbf{\bar{x}}_i$, for $i = 1, \ldots, N$.  We denote
the group of all such diffeomorphisms as $\mathrm{Diff}_{{\mathrm{vol}}, \mu}$.

The group $\mathrm{Diff}_{{\mathrm{vol}}, \mu}$ acts on $Q$ and moreover, the quotient
of $Q$ by this action is diffeomorphic to $\operatorname{SE}(2)  \times \mathbb{R}^{2N}$.
To see this, note that $\mathrm{Diff}_{{\mathrm{vol}}, \mu}$ acts on the $\operatorname{Emb}(\mathcal{F}_0,
\mathbb{R}^2)$ factor only, and that there exists a diffeomorphism of the
quotient space $\operatorname{Emb}(\mathcal{F}_0, \mathbb{R}^2)/\mathrm{Diff}_{{\mathrm{vol}}, \mu}$ with
$\mathbb{R}^{2N}$, given by
\[
  [\varphi] \mapsto (\varphi(\mathbf{\bar{x}}_1), \ldots, \varphi(\mathbf{\bar{x}}_N)).
\]

Similarly, the projection of $Q$ onto the quotient space
$Q/\mathrm{Diff}_{{\mathrm{vol}}, \mu} = \operatorname{SE}(2)  \times \mathbb{R}^{2N}$ is given by 
\[
  \pi_{Q, G_\mu}: (g, \varphi) \mapsto (g; \varphi(\mathbf{\bar{x}}_1), \ldots,
  \varphi(\mathbf{\bar{x}}_N)).
\]

\begin{proposition}
  After reducing by the group of volume preserving diffeomorphisms,
  the reduced phase is given by $T^{\ast}  \operatorname{SE}(2)  \times \mathbb{R}^{2N}$.
\end{proposition}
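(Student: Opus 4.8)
The plan is to feed the two ingredients already assembled—the identification $Q/G_\mu \cong \operatorname{SE}(2) \times \mathbb{R}^{2N}$ together with its explicit quotient map—into part~(2) of Theorem~\ref{thm:cotred}, and then simply compute the annihilator subbundle that the theorem names. By that theorem, $\varphi_\mu$ is a symplectic diffeomorphism of the reduced phase space $(T^{\ast} Q)_\mu$ onto the subbundle $[T\pi_{Q, G_\mu}(V)]^\circ \subset T^{\ast}(Q/G_\mu)$, where $V \subset TQ$ is tangent to the $\mathrm{Diff}_{\mathrm{vol}}$-orbits. Since $Q/G_\mu \cong \operatorname{SE}(2) \times \mathbb{R}^{2N}$, we have $T^{\ast}(Q/G_\mu) \cong T^{\ast}\operatorname{SE}(2) \times T^{\ast}\mathbb{R}^{2N}$, so the entire content of the proposition is the identification of this subbundle.

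The key computation is to determine $T\pi_{Q, G_\mu}(V)$. The infinitesimal generator of the $\mathrm{Diff}_{\mathrm{vol}}$-action at $(g, \varphi)$ associated to $\xi \in \mathfrak{X}_{\mathrm{vol}}$ has vanishing $\operatorname{SE}(2)$-component and $\operatorname{Emb}$-component $T\varphi \circ \xi$, because the action $\varphi \cdot \phi = \varphi \circ \phi$ affects only the embedding factor. Differentiating the quotient map $\pi_{Q, G_\mu}(g, \varphi) = (g; \varphi(\bar{\mathbf{x}}_1), \ldots, \varphi(\bar{\mathbf{x}}_N))$ along this generator yields a vector in $T(\operatorname{SE}(2)\times\mathbb{R}^{2N})$ whose $\operatorname{SE}(2)$-component vanishes and whose $i$-th $\mathbb{R}^2$-component is $T_{\bar{\mathbf{x}}_i}\varphi\,(\xi(\bar{\mathbf{x}}_i))$. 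Note that the $\xi$ lying in the Lie algebra of $G_\mu$ (those vanishing at the $\bar{\mathbf{x}}_i$) are correctly sent to zero, as they must be.

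The one substantive point is that the evaluation $\xi \mapsto (\xi(\bar{\mathbf{x}}_1), \ldots, \xi(\bar{\mathbf{x}}_N))$ is surjective onto $(\mathbb{R}^2)^N$ when restricted to divergence-free fields tangent to $\partial\mathcal{F}_0$. Since the $\bar{\mathbf{x}}_i$ are distinct interior points, any prescribed tuple of vectors can be realized—for instance by superposing divergence-free fields supported in disjoint neighbourhoods of the $\bar{\mathbf{x}}_i$, each obtained from a compactly supported stream function with the desired gradient at its center. As each $T_{\bar{\mathbf{x}}_i}\varphi$ is a linear isomorphism, it follows that $T\pi_{Q, G_\mu}(V) = \{0\} \times \mathbb{R}^{2N}$, the subbundle of velocities in the vortex directions. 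I expect this surjectivity to be the only step requiring genuine care; everything else is formal.

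Finally I would read off the annihilator: a covector $(\alpha, \beta) \in T^{\ast}\operatorname{SE}(2)\times T^{\ast}\mathbb{R}^{2N}$ annihilates $\{0\}\times\mathbb{R}^{2N}$ exactly when $\beta = 0$, whence $[T\pi_{Q, G_\mu}(V)]^\circ = T^{\ast}\operatorname{SE}(2) \times (\text{zero section of } T^{\ast}\mathbb{R}^{2N})$, which is canonically $T^{\ast}\operatorname{SE}(2)\times\mathbb{R}^{2N}$ (a point being the data $(g,\alpha)\in T^{\ast}\operatorname{SE}(2)$ together with the base point $\mathbf{x}\in\mathbb{R}^{2N}$). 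A dimension count is reassuring: the fiber of the annihilator has dimension $(3+2N)-2N = 3$, giving total dimension $6+2N$, exactly that of $T^{\ast}\operatorname{SE}(2)\times\mathbb{R}^{2N}$. Composing with the symplectic diffeomorphism $\varphi_\mu$ from Theorem~\ref{thm:cotred} then delivers the stated identification of the reduced phase space.
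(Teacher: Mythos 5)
Your proposal follows essentially the same route as the paper: both invoke part~(2) of Theorem~\ref{thm:cotred}, compute that $T\pi_{Q,G_\mu}(V)$ consists of vectors with vanishing $\operatorname{SE}(2)$-component and arbitrary $\mathbb{R}^{2N}$-component, and read off the annihilator as $T^{\ast}\operatorname{SE}(2)\times\mathbb{R}^{2N}$. The only difference is that you justify the surjectivity of evaluation at the vortex points (via compactly supported stream functions), a step the paper simply asserts, so your argument is a slightly more complete version of the same proof.
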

\begin{proof}
  The vertical bundle $V$ on $Q$ consists of vectors $(g, 0; \varphi,
  \dot{\varphi})$.  Projecting this bundle down under $T\pi_{Q,
    G_\mu}$ shows that $T\pi_{Q, G_\mu}(V)$ is spanned by elements of
  the form
\[
T\pi_{Q, G_\mu}(g, 0; \varphi, \dot{\varphi}) = (g, 0; \varphi(\mathbf{\bar{x}}_1),
\dot{\varphi}(\mathbf{\bar{x}}_1); \ldots; \varphi(\mathbf{\bar{x}}_N),
\dot{\varphi}(\mathbf{\bar{x}}_N))
\]
and as each of the $N$ vectors on the right hand side can range over
the whole of $T\mathbb{R}^2$, $T\pi_{Q, G_\mu}(V)$ is equal to $\operatorname{SE}(2)  \times
T \mathbb{R}^{2N}$.  Its annihilator is therefore $T^{\ast}  \operatorname{SE}(2)  \times
\mathbb{R}^{2N}$.
\end{proof}

The reduced symplectic structure on $T^{\ast}  \operatorname{SE}(2)  \times \mathbb{R}^{2N}$ is
described in theorem~\ref{thm:cotred}.  Explicitly, it is given by
\begin{equation} \label{magsymp}
  \Omega_\mathcal{B} = \Omega_{\mathrm{can}} - B_\mu, 
\end{equation}
where $\Omega_{\mathrm{can}}$ is the canonical symplectic structure on $T^{\ast} 
\operatorname{SE}(2) $ and $B_\mu$ is the pullback to $ T^{\ast}  \operatorname{SE}(2)  \times \mathbb{R}^{2N}$
of the form $\beta_\mu$ on $\operatorname{SE}(2)  \times \mathbb{R}^{2N}$.  From now on, we
will no longer make any notational distinction between $\beta_\mu$ and
its pull-back $B_\mu$ and denote both by $\beta_\mu$.

Apart from the reduced symplectic form, which will be determined
explicitly later on, the dynamics on the reduced space is also
governed by a reduced version of the Hamiltonian (\ref{totalenergy}).
The calculation of this Hamiltonian is the subject of the next
section.

\subsection{The Reduced Hamiltonian}

The kinetic energy (\ref{kinenergy}) is invariant under the action of
$\mathrm{Diff}_{{\mathrm{vol}}, \mu}$ on $Q$ and hence determines a reduced kinetic
energy function on the quotient space $\operatorname{SE}(2)  \times \mathbb{R}^{2N}$.  Parts
of the computation of the explicit form of the reduced kinetic energy
can be found throughout the literature, but since no single reference
has a complete picture, we briefly recall these results here.

Using the Helmholtz-Hodge decomposition (\ref{hhdecomp}), the kinetic
energy (\ref{kinenergy}) of the combined solid-fluid system can be written as
\begin{align}
  T(\dot{g}, \dot{\varphi}) & = \frac{1}{2} \left\langle \! \left\langle\dot{g},
    \dot{g}\right\rangle \!
  \right\rangle_{\operatorname{SE}(2) } +
  \frac{1}{2} \int_{\mathcal{F}} \mathbf{u} \cdot \mathbf{u} \, d\mathbf{x}  \nonumber \\
  & = \frac{1}{2}  \left\langle \! \left\langle\dot{g},\dot{g}\right\rangle \!
  \right\rangle_{\operatorname{SE}(2) } +  \frac{1}{2} \int_{\mathcal{F}} \mathbf{u}_V \cdot \mathbf{u}_V \, d\mathbf{x} +
  \frac{1}{2} \int_{\mathcal{F}} \nabla \Phi \cdot \nabla \Phi \, d\mathbf{x}, \label{redkin}
\end{align}
where we have used the fact that $\mathbf{u}_V$ and $\nabla \Phi$ are
$L_2$-orthogonal.  

Following a reasoning similar to \cite{mw1983}, it can be shown that
the kinetic energy of the vortex system is nothing but the negative of
the Kirchhoff-Routh function $W_G$ (see (\ref{krfunction})):
\[
  \frac{1}{2} \int_{\mathcal{F}} \mathbf{u}_V \cdot \mathbf{u}_V \, d \mathbf{x} 
    = - W_G(\mathbf{X}_1, \ldots, \mathbf{X}_N),
\]

The gradient term in (\ref{redkin}) can be rewritten using a standard
procedure, going back to Kirchhoff and Lamb, and yields the familiar
added masses and moments of inertia for a rigid body in potential
flow.  By using Green's theorem to rewrite the gradient term as an
integral over the boundary, and substituting the Kirchhoff expansion
(\ref{kirchhoff}), one can show that (see \cite{lamb, KMlocomotion})
\begin{equation} \label{gradterm}
\frac{1}{2} \int_{\mathcal{F}} \nabla \Phi \cdot \nabla \Phi \, d \mathbf{x} = 
  \frac{1}{2}     \begin{pmatrix} 
      \Omega & \mathbf{V}
    \end{pmatrix}
     \mathbb{M}_a
    \begin{pmatrix} 
      \Omega \\ \mathbf{V}
    \end{pmatrix},
\end{equation}
where $\mathbb{M}_a$ is a $3$-by-$3$ matrix of added masses and inertia whose
entries depend only on the geometry of the rigid body.  For the
circular cylinder, $\mathbb{M}_a$ can be evaluated explicitly: 
\[
  \mathbb{M}_a =  \begin{pmatrix}
      0 & 0 \\
      0 & \pi R^2 \mathbf{I} 
    \end{pmatrix}.
\]

The important point is to note that the gradient term (\ref{gradterm})
has the same form as the kinetic energy of the rigid body.  By
introducing the matrix $\mathbb{M} = \mathbb{M}_m + \mathbb{M}_a$, where $\mathbb{M}_m$ is the
mass matrix (\ref{Mm}), the kinetic energy of the rigid body, together
with the gradient term, can be written as
\[
  T = \frac{1}{2} \begin{pmatrix} 
      \Omega & \mathbf{V}
    \end{pmatrix}
     \mathbb{M}
    \begin{pmatrix} 
      \Omega \\ \mathbf{V}
    \end{pmatrix},
\]
which determines by left extension a function, also denoted by $T$, on $T\operatorname{SE}(2) $.
Putting everything together, we conclude that the total reduced
kinetic energy on $T\operatorname{SE}(2)  \times \mathbb{R}^{2N}$ is given by
\begin{equation} \label{totalhamiltonian}
  T_{\mathrm{red}}(\Omega, \mathbf{V}; \mathbf{X}_1, \ldots, \mathbf{X}_N) = \frac{1}{2}
\begin{pmatrix} 
      \Omega & \mathbf{V}
    \end{pmatrix}
     \mathbb{M}
    \begin{pmatrix} 
      \Omega \\ \mathbf{V}
    \end{pmatrix} - W_G(\mathbf{X}_1, \ldots, \mathbf{X}_N), 
\end{equation}
where $W_G$ is given by (\ref{krfunction}) and (\ref{green}).

Two issues are noteworthy here.  First of all, we follow \cite{mw1983}
and \cite{cylvortices} in ``regularizing'' $W_G$ by subtracting infinite
contributions that arise when putting $\mathbf{X}_0 = \mathbf{X}_1$ in the
expression for the Green's function $G$ in (\ref{green}).  Secondly, we have chosen to express the kinetic
energy in the body frame, which will be more convenient later on.

\subsection{The Magnetic Two-form $\beta_\mu$}

Recall that $\beta_\mu$ is a two-form on $\mathbb{R}^{2N} \times \operatorname{SE}(2) $
defined as
\[
\beta_\mu(\mathbf{x}_1, \ldots, \mathbf{x}_N; g)( (\mathbf{v}_1, \ldots,
\mathbf{v}_N, \dot{g}_1), (\mathbf{w}_1, \ldots, \mathbf{w}_N, \dot{g}_2)) 
= \mathbf{d}\alpha_\mu(\varphi, g)( (\dot{\varphi}_1, \dot{g}_1),
(\dot{\varphi}_2, \dot{g}_2) ),
\]
where $\mathbf{v}_i, \mathbf{w}_i \in T_{\mathbf{x}_i} \mathbb{R}^2$ ($i = 1, \ldots,
N$), and $\dot{g}_1$ and $\dot{g}_2$ are elements of $T_g \operatorname{SE}(2) $.  The
embedding $\varphi$ has to satisfy $\varphi(\mathbf{\bar{x}}_i) = \mathbf{x}_i$
for $i = 1, \ldots, N$, while $\dot{\varphi}_1$ and $\dot{\varphi}_2$
should satisfy
\[
  \dot{\varphi}_1(\mathbf{\bar{x}}_i) = \mathbf{v}_i \quad \text{and} \quad  
  \dot{\varphi}_2(\mathbf{\bar{x}}_i) = \mathbf{w}_i \quad \text{(for $i=1, \ldots, N$)}.
\]

Using Cartan's structure equation (\ref{cartan}), $\mathbf{d}\alpha_\mu$ can be
rewritten as the difference of a curvature term and a ``Lie-Poisson''
term: if we introduce two-forms $\beta_{{\mathrm{curv}}}$ and
$\beta_{{\mathrm{l.p}}}$ on $\mathbb{R}^{2N} \times \operatorname{SE}(2) $, determined by
\[
  \pi_{Q, G_\mu}^{\ast}  \beta_{{\mathrm{curv}}} = \left<\mu, \mathcal{B}\right>, \quad \text{and} \quad 
  \pi_{Q, G_\mu}^{\ast}  \beta_{{\mathrm{l.p}}} = \left<\mu, [\mathcal{A}, \mathcal{A}]\right>,
\]
then $\beta_\mu = \beta_{{\mathrm{curv}}} - \beta_{{\mathrm{l.p}}}$.

In the computation of these terms, we will frequently encounter
expressions involving the vertical part of $(\dot{\varphi}_1,
\dot{g}_1)$ and $(\dot{\varphi}_2, \dot{g}_2)$, evaluated at the
vortex locations.  We now derive a convenient expression for these
terms.

Let $\Phi_1$ be the solution of the Neumann problem (\ref{neumann})
with boundary data $\dot{g}_1$ and denote by $\mathbf{u}$ the Eulerian
velocity field associated to $\dot{\varphi}_1$.  From the
Helmholtz-Hodge decomposition $\mathbf{u} = \mathbf{u}_{\rm v} + \nabla
\Phi_1$, it follows the divergence-free part $\mathbf{u}_{\rm v}$
satisfies (for $i = 1, \ldots, N$)
\begin{align*}
\mathbf{u}_{\rm v}(\mathbf{x}_i) & = \mathbf{u}(\mathbf{x}_i) - \nabla \Phi_1(\mathbf{x}_i) \\
  & = \mathbf{v}_i - \nabla \Phi_1(\mathbf{x}_i),
\end{align*}
since $\mathbf{u}(\mathbf{x}_i) = \dot{\varphi}_1(\mathbf{\bar{x}}_i) = \mathbf{v}_i$.
From the definition of $\mathcal{A}$ it then follows that we have proved the
following fact about the vertical part of $\dot{\varphi}_1$:
\begin{align}
  (\mathcal{A}_{(\varphi, g)}(\dot{\varphi}_1, \dot{g}_1))(\mathbf{\bar{x}}_i) 
    & = (\varphi^\ast \mathbf{u}_{\mathrm{v}})(\bar{\mathbf{x}}_i) \nonumber \\
    & = T \varphi^{-1} (\mathbf{v}_i - \nabla \Phi_1 (\mathbf{x}_i)) \label{vf}
\end{align}
for  $i = 1, \ldots, N$.
A similar property holds for $\dot{\varphi}_2$, with $\mathbf{v}_i$
replaced by $\mathbf{w}_i$ and involving $\Phi_2$, the solution of
(\ref{neumann}) with boundary data $\dot{g}_2$.

As an aside, we note that the Neumann connection induces a connection
on the trivial bundle $\mathbb{R}^2 \times \operatorname{SE}(2)  \rightarrow \operatorname{SE}(2) $, and the
expression between brackets on the right-hand side of (\ref{vf}) is
the vertical component of the vector $\mathbf{v}_i$.  This connection is
important in the theory of \emph{Routh reduction} (see
\cite{Routh00}).

\paragraph{The Curvature Term.}

The curvature term $\beta_{{\mathrm{curv}}}$ is given by
\begin{multline*}
  \beta_{\mathrm{curv}}(\mathbf{x}_1, \ldots, \mathbf{x}_N; g)( (\mathbf{v}_1, \ldots,
  \mathbf{v}_N, \dot{g}_1), (\mathbf{w}_1, \ldots, \mathbf{w}_N, \dot{g}_2))
  \\
  = \left< \mu, \mathcal{B}(\varphi, g)( (\dot{\varphi}_1, \dot{g}_1),
    (\dot{\varphi}_2, \dot{g}_2) )\right>,
\end{multline*}
where $\mathcal{B}$ is the curvature of the Neumann connection.  It follows
from (\ref{curvature}) that $\beta_{\mathrm{curv}}$ does not depend on the
value of $\mathbf{v}_i$ and $\mathbf{w}_i$, but only on $\dot{g}_1$ and
$\dot{g_2}$.  From now on, we will therefore suppress these arguments.

\begin{proposition}
  For the rigid body interacting with $N$ point vortices, the
  curvature term is  
given by 
\begin{equation} \label{betacurv} \beta_{\mathrm{curv}}(\mathbf{x}_1, \ldots,
  \mathbf{x}_N; g)(\dot{g}_1, \dot{g}_2) = \sum_{i=1}^N \Gamma_i \, d \textbf{x} \,
  (\nabla \Phi_1(\mathbf{x}_i), \nabla \Phi_2(\mathbf{x}_i)),
  \end{equation}
  where $\Phi_1$ and $\Phi_2$ are the solutions of (\ref{neumann})
  associated to $\dot{g}_1$ and $\dot{g}_2$, respectively, and $d \textbf{x}$ is the volume element on $\mathcal{F}$.
  %\emph{i.e.} the boundary term in (\ref{curvature}) is zero.
\end{proposition}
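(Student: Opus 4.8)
The plan is to read off (\ref{betacurv}) from the general curvature formula (\ref{curvature}) of Proposition~\ref{prop:curv} by inserting the point-vortex vorticity (\ref{vorticity}). Since $\beta_{\mathrm{curv}}$ is by construction the pairing $\langle\mu,\mathcal{B}\rangle$ of the vorticity with the Neumann curvature, formula (\ref{curvature}) presents it as a bulk term $\langle\langle\mu,\mathbf{d}\Phi_1\wedge\mathbf{d}\Phi_2\rangle\rangle_{\mathcal{F}}$ minus a boundary term $\int_{\partial\mathcal{F}}\alpha\wedge *(\mathbf{d}\Phi_1\wedge\mathbf{d}\Phi_2)$, with $\alpha = \mathbf{u}_{\rm v}^\flat$. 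The work therefore splits into showing that the bulk term reproduces the right-hand side of (\ref{betacurv}) and that the boundary term vanishes.

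For the bulk term I would first record the elementary two-dimensional identity $\mathbf{d}\Phi_1\wedge\mathbf{d}\Phi_2 = d\mathbf{x}(\nabla\Phi_1,\nabla\Phi_2)\, d\mathbf{x}$, i.e. the coefficient of this top-degree form relative to the area element is exactly the Jacobian $d\mathbf{x}(\nabla\Phi_1,\nabla\Phi_2)$ appearing on the right of (\ref{betacurv}). The $L^2$-metric on two-forms pairs $a\,d\mathbf{x}$ with $b\,d\mathbf{x}$ by $\int_\mathcal{F} ab\,d\mathbf{x}$, and the spatial vorticity is the push-forward $\varphi_\ast\mu = \sum_i\Gamma_i\,\delta(\mathbf{x}-\mathbf{x}_i)\,d\mathbf{x}$, with $\mathbf{x}_i = \varphi(\bar{\mathbf{x}}_i)$ and no Jacobian factor because $\varphi$ is volume preserving. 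The delta functions then sift out the integrand at the vortex positions, giving $\sum_i\Gamma_i\,d\mathbf{x}(\nabla\Phi_1(\mathbf{x}_i),\nabla\Phi_2(\mathbf{x}_i))$, which is precisely (\ref{betacurv}).

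The main obstacle is the boundary term, and it is here that the circular shape is used. In two dimensions $*(\mathbf{d}\Phi_1\wedge\mathbf{d}\Phi_2)$ is the function $\det(\nabla\Phi_1,\nabla\Phi_2)$, so the boundary contribution is $\int_{\partial\mathcal{F}}\det(\nabla\Phi_1,\nabla\Phi_2)\,\alpha$. I would evaluate the determinant from the explicit elementary potentials (\ref{bodypot}): a direct computation gives $\det(\nabla\Phi_X,\nabla\Phi_Y) = -R^2/\left\Vert\mathbf{X}\right\Vert^4$, and because $\Phi_\Omega = 0$ both $\nabla\Phi_1$ and $\nabla\Phi_2$ are fixed linear combinations of $\nabla\Phi_X$ and $\nabla\Phi_Y$; hence $\det(\nabla\Phi_1,\nabla\Phi_2)$ is a constant multiple of $\left\Vert\mathbf{X}\right\Vert^{-4}$ (the determinant being unaffected by passing between body and spatial frames). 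On the circle $\partial\mathcal{F}$ of radius $R$ this function is therefore \emph{constant} and may be pulled outside the integral, leaving $\oint_{\partial\mathcal{F}}\alpha = \oint_{\partial\mathcal{F}}\mathbf{u}_{\rm v}\cdot d\mathbf{l}$. Since $\nabla\Phi$ is single valued, this circulation of $\mathbf{u}_{\rm v}$ equals the total circulation of $\mathbf{u}$, which vanishes by the standing assumption of zero circulation. The boundary term thus disappears and only the bulk term survives, establishing (\ref{betacurv}). It is worth flagging that the constancy of the Jacobian on $\partial\mathcal{F}$ is exactly the place where circularity enters; for a non-circular body one would need to argue instead from the structure of the image vortices, but that refinement is unnecessary here.
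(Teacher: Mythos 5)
Your proof is correct and follows essentially the same route as the paper: both insert the point-vortex vorticity into the curvature formula (\ref{curvature}), obtain the bulk term by delta-function sifting (with the material-to-spatial push-forward handled by volume preservation), and kill the boundary term by observing that for the circular cylinder the Jacobian $\det(\nabla\Phi_1,\nabla\Phi_2)$ is constant on $\partial\mathcal{F}$, reducing it to the circulation $\oint_{\partial\mathcal{F}}\alpha = \Gamma = 0$. The only cosmetic difference is that the paper uses invariance to reduce to the basis $\{\mathbf{e}_\Omega, \mathbf{e}_x, \mathbf{e}_y\}$ of $\mathfrak{se}(2)$ and checks the single case $\dot{g}_1 = \mathbf{e}_x$, $\dot{g}_2 = \mathbf{e}_y$, whereas you treat general velocities at once via bilinearity of the determinant and the explicit value $\det(\nabla\Phi_X,\nabla\Phi_Y) = -R^2/\left\Vert\mathbf{X}\right\Vert^4$.
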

\begin{proof}
  We use the explicit form for the elementary velocity potentials
  derived in proposition~\ref{prop:invariance}.  Note that
  $\beta_{\mathrm{curv}}$ depends on $\dot{g}_1$ and $\dot{g}_2$ through
  $g^{-1}\dot{g}_1$ and $g^{-1}\dot{g}_2$, so we can calculate its
  expression on the basis 
$\{\mathbf{e} _\Omega, \mathbf{e} _x, \mathbf{e}_y\}$ of 
$\mathfrak{se}(2)$ given in (\ref{sebasis})
  by assuming that $g = e$ and
  $\dot{g}_1, \dot{g}_2 \in T_e \operatorname{SE}(2)  = \mathfrak{se}(2)$.

  First, we prove that the boundary term in (\ref{curvature}) is zero.
  This is obvious if either $\dot{g}_1$ or $\dot{g}_2$ is proportional
  to $\mathbf{e}_\Omega$, since $\Phi_\omega = 0$.  We may therefore
  assume that $\dot{g}_1 = \mathbf{e}_x$ and $\dot{g}_2 = \mathbf{e}_y$.  In
  this case, the two-form $\mathbf{d} \Phi_x \wedge \mathbf{d} \Phi_y$ restricted to
  the boundary of $\mathcal{F}$ (the curve $(x - x_0)^2 + (y -
  y_0)^2 = 1$) is given by $\mathbf{d} \Phi_x \wedge \mathbf{d} \Phi_y = d \textbf{x}$.
  Since $\ast d \textbf{x}= 1$ in two dimensions, the boundary term reduces
  to
\begin{equation} \label{curvcirc}
\int_{\partial \mathcal{F}} \alpha = \Gamma,
\end{equation}
which is zero, since by assumption there is no circulation around the
rigid body.

This leaves us with the integral over the fluid domain in
(\ref{curvature}).  Plugging in the explicit form (\ref{vorticity}) of
the vorticity, we have
\begin{align*}
\beta_{\mathrm{curv}}(\mathbf{x}_1, \ldots,
  \mathbf{x}_N; g)(\dot{g}_1, \dot{g}_2) & = 
\left\langle\!\left\langle \mu, \mathbf{d} \Phi_1 \wedge \mathbf{d} \Phi_2\right\rangle\!\right\rangle_\mathcal{F} \\ 
& = \sum_{i = 1}^N \Gamma_i
\,d \textbf{x}\,(\nabla \Phi_1(\mathbf{x}_i), \nabla \Phi_2(\mathbf{x}_i)).
\end{align*}
We do not work out this expression any further, since it will be
cancelled out by a similar term in the computation of $\beta_{{\mathrm{l.p}}}$.
\end{proof}

It is remarkable that the contribution from the curvature to the
magnetic term will be cancelled in its entirety by a similar term in
the expression for Lie-Poisson term, which is computed below.
However, in the case of non-zero circulation, the curvature generates
an additional term (\ref{curvcirc}) proportional to the circulation.
The effect of this term is studied in \cite{VaKaMa08}.

\paragraph{The Lie-Poisson Term.}

The Lie-Poisson term $\beta_{{\mathrm{l.p}}}$ is given by
\begin{multline*}
  \beta_{\mathrm{l.p}}(\mathbf{x}_1, \ldots, \mathbf{x}_N; g)( (\mathbf{v}_1, \ldots,
  \mathbf{v}_N, \dot{g}_1), (\mathbf{w}_1, \ldots, \mathbf{w}_N, \dot{g}_2))
  \\
  = \left< \mu, [\mathcal{A}_{(\varphi, g)}(\dot{\varphi}_1, \dot{g}_1), 
\mathcal{A}_{(\varphi, g)}(\dot{\varphi}_2, \dot{g}_2)] \right>,
\end{multline*}
where $(\dot{\varphi}_1, \dot{g}_1)$ and $(\dot{\varphi}_2,
\dot{g}_2)$ have similar interpretations as before.  The right-hand
side can be made more explicit by noting that, for divergence-free
vector fields $\mathbf{u}_1$ and $\mathbf{u}_2$ tangent to $\partial \mathcal{F}$
and arbitrary one-forms $\alpha$, the following holds: 
\begin{equation} \label{magic}
  \int_\mathcal{F} \alpha([\mathbf{u}_1, \mathbf{u}_2]) \,d \textbf{x} = 
  \int_\mathcal{F} \mathbf{d} \alpha(\mathbf{u}_1, \mathbf{u}_2) \,d \textbf{x}.
\end{equation}
The proof of this assertion is a straightforward application of
Cartan's magic formula and parallels the proof of 
theorem~4.2 in \cite{mw1983}.

Using this formula, we have 
\begin{multline*}
  \left< \mu, [\mathcal{A}_{(\varphi, g)}(\dot{\varphi}_1, \dot{g}_1),
     \mathcal{A}_{(\varphi, g)}(\dot{\varphi}_2, \dot{g}_2)] \right> =
  \int_\mathcal{F} (\varphi^{\ast}  \alpha)([\mathcal{A}_{(\varphi,
    g)}(\dot{\varphi}_1, \dot{g}_1),
   \mathcal{A}_{(\varphi, g)}(\dot{\varphi}_2, \dot{g}_2)]) \, d \textbf{x} \\
    = \int_\mathcal{F} \mu(\mathcal{A}_{(\varphi,
    g)}(\dot{\varphi}_1, \dot{g}_1),
  \mathcal{A}_{(\varphi, g)}(\dot{\varphi}_2, \dot{g}_2)) \, d \textbf{x} \\
    = \sum_{i=1}^N \Gamma_i \, (\varphi^{\ast} d \textbf{x})\big((\mathcal{A}_{(\varphi,
    g)}(\dot{\varphi}_1, \dot{g}_1))(\mathbf{\bar{x}}_i),
  (\mathcal{A}_{(\varphi, g)}(\dot{\varphi}_2, \dot{g}_2))(\mathbf{\bar{x}}_i)\big).
\end{multline*}
By substituting (\ref{vf}) in this expression, we conclude that the Lie-Poisson term is given by 
\begin{multline*}
  \beta_{\mathrm{l.p}}(\mathbf{x}_1, \ldots, \mathbf{x}_N; g)( (\mathbf{v}_1, \ldots,
  \mathbf{v}_N, \dot{g}_1), (\mathbf{w}_1, \ldots, \mathbf{w}_N, \dot{g}_2)) \\
= \sum_{i=1}^N \Gamma_i \, d \textbf{x}(
\mathbf{v}_i - \nabla \Phi_1 (\mathbf{x}_i), \mathbf{w}_i - \nabla \Phi_2 (\mathbf{x}_i)).
\end{multline*}

\paragraph{The Magnetic Two-form: Putting Everything Together.}

Using the previously derived results, we may conclude that $\beta_\mu$
is given by (suppressing its arguments for the sake of clarity)
\begin{align*}
  \beta_\mu & = \beta_{\mathrm{curv}} - \beta_{\mathrm{l.p}} \\
  & = \sum_{i=1}^N \Gamma_i \big( d \textbf{x}(\nabla \Phi_1(\mathbf{x}_i),
    \nabla \Phi_2(\mathbf{x}_i)) - d \textbf{x}(\mathbf{v}_i - \nabla
    \Phi_1(\mathbf{x}_i), \mathbf{w}_i - \nabla \Phi_2(\mathbf{x}_i)\big) \\
  & = \sum_{i=1}^N \Gamma_i \big( 
    -d \textbf{x}(\mathbf{v}_i, \mathbf{w}_i) + d \textbf{x}(\mathbf{v}_i, \nabla
    \Phi_2(\mathbf{x}_i)) + d \textbf{x}(\nabla \Phi_1(\mathbf{x}_i), \mathbf{w}_i) \big).
\end{align*}

Introducing stream functions $\Psi'_i(g, \dot{g}; \mathbf{x})$ ($i =
1,2$) as harmonic conjugates to $\Phi_i(g, \dot{g}; \mathbf{x})$,
\emph{i.e.} such that
\begin{equation} \label{HC}
{\nabla \Phi_i} \contraction d \textbf{x} = \mathbf{d} \Psi_i' \quad i = 1, 2
\end{equation}
we may rewrite the expression for $\beta_\mu$ as 
\[
\beta_\mu = \sum_{i = 1}^N \Gamma_i \big(-d \textbf{x}(\mathbf{v}_i, \mathbf{w}_i)
- \mathbf{d} \Psi'_2(\mathbf{v}_i) + \mathbf{d}\Psi'_1(\mathbf{w}_i) \big).
\]

The stream function can be written in Lamb form by introducing the
elementary stream functions $\Psi_x$, $\Psi_y$, and $\Psi_\omega$ as
harmonic conjugates to $\Phi_x$, $\Phi_y$, and $\Phi_\omega$,
respectively.  For $i = 1, 2$, we then have 
\begin{align*}
  \Psi_i'(g, \dot{g}; \mathbf{x}) & = v_{x, i} \Psi_x(g, \mathbf{x}) 
  + v_{y, i} \Psi_y(g, \mathbf{x}) + v_{\omega, i} \Psi_\omega(g,
  \mathbf{x}) \\
  & = \mathbf{d} x_0 (\dot{g}_i) \Psi_x(g, \mathbf{x}) +
      \mathbf{d} y_0 (\dot{g}_i) \Psi_y(g, \mathbf{x}) +
      \mathbf{d} \beta (\dot{g}_i) \Psi_\omega(g, \mathbf{x}), 
\end{align*}
and so the magnetic two-form becomes
\[
\beta_\mu = \sum_{i = 1}^N \Gamma_i \big(-d \textbf{x}(\mathbf{v}_i, \mathbf{w}_i) -
\mathbf{d} \xi_A(\dot{g}_2) \mathbf{d} \Psi'_A(\mathbf{v}_i) + \mathbf{d} \xi_A(\dot{g}_1)
\mathbf{d}\Psi'_1(\mathbf{w}_i) \big), 
\]
where the coordinates on $\operatorname{SE}(2) $ are denoted by $\xi_A = \{x_0,
y_0, \beta\}$, and $\Psi'_A = \{\Psi'_x, \Psi'_y,
\Psi'_\omega\}$.  

\begin{theorem} \label{thm:magform}
  The magnetic two-form $\beta_\mu$ is a two-form on $\mathbb{R}^{2N} \times
  \operatorname{SE}(2) $ and can be written as
\begin{multline} \label{betaexpl}
  \beta_\mu(\mathbf{x}_1, \ldots, \mathbf{x}_N; g)( (\mathbf{v}_1, \ldots,
  \mathbf{v}_N, \dot{g}_1), (\mathbf{w}_1, \ldots, \mathbf{w}_N, \dot{g}_2)) \\
  = \sum_{i=1}^N \Gamma_i\, \big( -d \textbf{x}(\mathbf{v}_i, \mathbf{w}_i) + \mathbf{d}
 \Theta(g, \mathbf{x}_i)
(\dot{g}_1, \mathbf{v}_i; \dot{g}_2,
  \mathbf{w}_i) \big).
\end{multline}
where $\Theta$ is the one-form on $\mathbb{R}^2 \times \operatorname{SE}(2) $ given by 
\begin{equation} \label{theta}
  \Theta(g, \mathbf{x})(\dot{g}, \mathbf{v}) = \Psi_A'(g, \mathbf{x})
  \mathbf{d}\xi_A(\dot{g}).   
\end{equation}

\end{theorem}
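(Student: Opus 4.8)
The plan is to take as given the expression for $\beta_\mu$ assembled just above the statement from the curvature and Lie--Poisson pieces, namely
\[
  \beta_\mu = \sum_{i=1}^N \Gamma_i\big(-d\textbf{x}(\mathbf{v}_i,\mathbf{w}_i)
    - \mathbf{d}\xi_A(\dot{g}_2)\,\mathbf{d}\Psi_A'(\mathbf{v}_i)
    + \mathbf{d}\xi_A(\dot{g}_1)\,\mathbf{d}\Psi_A'(\mathbf{w}_i)\big),
\]
where the $\mathbf{d}\Psi_A'$ are \emph{spatial} differentials of the elementary stream functions and summation over the repeated index $A$ (ranging over the three group coordinates) is understood. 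The entire content of the theorem is then to recognise the two mixed terms in the bracket as the value, on the $i$-th copy of $\mathbb{R}^2$, of the exterior derivative of the one-form $\Theta = \Psi_A'\,\mathbf{d}\xi_A$ of \eqref{theta}. Thus the proof reduces to a single exterior-derivative computation on the product $\mathbb{R}^2 \times \operatorname{SE}(2) $ together with a pullback along $(\mathbf{x}_1,\dots,\mathbf{x}_N;g)\mapsto(\mathbf{x}_i;g)$.

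First I would compute $\mathbf{d}\Theta = \mathbf{d}\Psi_A' \wedge \mathbf{d}\xi_A$, using $\mathbf{d}(\mathbf{d}\xi_A)=0$, and split the total differential of each coefficient into its spatial and group parts, $\mathbf{d}\Psi_A' = \mathbf{d}_{\mathbf{x}}\Psi_A' + \mathbf{d}_g\Psi_A'$. Evaluating the resulting two-form on the pair $(\dot{g}_1,\mathbf{v}_i)$, $(\dot{g}_2,\mathbf{w}_i)$ and using that $\mathbf{d}\xi_A$ annihilates the spatial directions, the spatial--group cross terms collapse to $\mathbf{d}_{\mathbf{x}}\Psi_A'(\mathbf{v}_i)\,\mathbf{d}\xi_A(\dot{g}_2) - \mathbf{d}_{\mathbf{x}}\Psi_A'(\mathbf{w}_i)\,\mathbf{d}\xi_A(\dot{g}_1)$, which is precisely the mixed pair appearing in $\beta_\mu$ above once the wedge and orientation conventions fixed earlier are taken into account. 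Summing over $i$ with the weights $\Gamma_i$ and reinstating the transport term $-\Gamma_i\,d\textbf{x}(\mathbf{v}_i,\mathbf{w}_i)$ would then give \eqref{betaexpl}.

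The step I expect to be the real obstacle is the purely-group piece of $\mathbf{d}\Theta$, the two-form $\sum_A \mathbf{d}_g\Psi_A' \wedge \mathbf{d}\xi_A$ living on $\operatorname{SE}(2) $ alone: since the coefficients $\Psi_A'$ themselves depend on $g$, this term is a priori present and would spoil the identification, so it must be shown to vanish. Here the circular-cylinder hypothesis does the work. Because the elementary potentials \eqref{elempot} depend on $g$ only through $(x_0,y_0)$ and $\Phi_\omega = 0$, the stream functions $\Psi_x,\Psi_y$ are independent of $\beta$ and $\Psi_\omega$ is constant, so no $\mathbf{d}\beta$ term survives and the only candidate contribution is the coefficient of $\mathbf{d}x_0 \wedge \mathbf{d}y_0$, namely $\partial_{x_0}\Psi_y - \partial_{y_0}\Psi_x$. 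I would verify directly, from the harmonic conjugates of \eqref{elempot}, the reciprocity identity $\partial_{x_0}\Psi_y = \partial_{y_0}\Psi_x$ (equivalently $\partial_x\Psi_y = \partial_y\Psi_x$, since these functions depend only on $\mathbf{x}-\mathbf{x}_0$), which expresses the symmetry of the added-mass coupling and forces $\sum_A \mathbf{d}_g\Psi_A' \wedge \mathbf{d}\xi_A = 0$. With the group-only term eliminated, only the spatial--group cross terms remain, and the identification of $\beta_\mu$ with \eqref{betaexpl} is complete.
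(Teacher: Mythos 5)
Your proposal is correct and follows essentially the same route as the paper's own proof: both expand $\mathbf{d}\Theta = \mathbf{d}\Psi_A' \wedge \mathbf{d}\xi_A$, use $\Psi'_\omega = 0$ and the $\beta$-independence of $\Psi'_{x,y}$ to reduce the potentially troublesome purely-group part to the coefficient of $\mathbf{d}x_0 \wedge \mathbf{d}y_0$, and then kill that coefficient $\frac{\partial \Psi'_y}{\partial x_0} - \frac{\partial \Psi'_x}{\partial y_0}$ by direct computation from the explicit stream functions (\ref{stream}). Your reciprocity identity is precisely the paper's concluding ``term in brackets vanishes'' step.
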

\begin{proof}
  Note that the exterior derivative on the right-hand side of
  (\ref{betaexpl}) is the exterior derivative on $\mathbb{R}^{2N} \times
  \operatorname{SE}(2) $.  Expanding the second term, we have
\begin{align*}
  \mathbf{d}(\Psi_A'(g, \mathbf{x}) \mathbf{d}\xi_A) & = \mathbf{d} \Psi_A' \wedge \mathbf{d} \xi_A \\
    & = \mathbf{d} \Psi'_x \wedge \mathbf{d} x_0 + \mathbf{d} \Psi'_y \wedge \mathbf{d} y_0 \\
    & = \mathbf{d}_{\mathbb{R}^2} \Psi'_x \wedge \mathbf{d} x_0 + \mathbf{d}_{\mathbb{R}^2} \Psi'_y \wedge \mathbf{d}
    y_0 + \left( \frac{\partial \Psi'_y}{\partial x_0} -  
      \frac{\partial \Psi'_x}{\partial y_0} \right) \mathbf{d} x_0
    \wedge \mathbf{d} y_0,
\end{align*}
where the subscript `$\mathbb{R}^2$' indicates that the derivative should be
taken with respect to the spatial variables only, \emph{i.e.} that the
$\operatorname{SE}(2) $-variable should be kept fixed.  Also, we rely on the fact that
$\Psi'_\omega = 0$ (since $\Phi_\omega = 0$), and that $\Psi'_{x, y}$
does not depend on $\beta$.

The theorem is proved once we show that the term in brackets
vanishes.  To show this, we need the explicit form for the elementary
stream functions: 
\begin{equation} \label{stream}
  \Psi'_x = \frac{y-y_0}{(x-x_0)^2 + (y-y_0)^2} \quad \text{and} \quad  
\Psi'_y = -\frac{x-x_0}{(x-x_0)^2 + (y-y_0)^2}.
\end{equation}
and it is then easily shown that this is indeed the case.
\end{proof}

Putting everything together, we conclude that the symplectic structure
$\Omega_\mathcal{B}$ on $\mathbb{R}^{2N} \times T^{\ast}  \operatorname{SE}(2) $ (see
(\ref{magsymp})) is given by 
\begin{equation} \label{magsympexpl}
  \Omega_\mathcal{B} = \Omega_{\mathrm{can}} - \beta_\mu, 
\end{equation}
where $\beta_\mu$ is given in the theorem above.  The first term is the canonical symplectic form on 
$T^{\ast}  \operatorname{SE}(2)$ while the magnetic consists of two parts, one (proportional to $d \mathbf{x}$) being the symplectic form on $\mathbb{R}^{2N}$, while the other one encodes the effects of the ambient fluid through the stream functions $\Psi_x'$ and $\Psi_y'$.  Note that $\mathbb{R}^{2N}$ is a co-adjoint orbit of the diffeomorphism group, and that its symplectic structure is nothing but the Konstant-Kirillov-Souriau form.

\section{Reduction with Respect to $SE(2)$} \label{sec:redse}

After reducing the fluid-solid system with respect to the $\mathrm{Diff}_{\mathrm{vol}}$-symmetry, we are left with a system on $T^{\ast}  \operatorname{SE}(2)  \times \mathbb{R}^{2N}$ with a magnetic symplectic form.  The group $\operatorname{SE}(2) $ acts diagonally on the reduced phase space. What remains is to carry out the reduction with respect to the group $\operatorname{SE}(2)$ and then to make the connection with the equations of motion in \cite{cylvortices} and \cite{BoMa03}.  

Two factors complicate this strategy, however: first of all, there is the presence of the magnetic term in the symplectic structure, which precludes using standard Euler-Poincar\'e theory, for instance.  Secondly, the phase space is a Cartesian product on which the symmetry group acts diagonally.  Even in the absence of magnetic terms, this would lead to the appearance of certain \textbf{\emph{interaction terms}} in the Poisson structure (see
\cite{KrMa86}).

In order to do Poisson reduction for this kind of manifold, 
we begin this section by considering the product of a cotangent bundle $T^{\ast}  H$, where $H$ is a Lie group, and a symplectic manifold $P$, and we
assume that the symplectic structure on $T^{\ast}  H \times P$ is the sum
of the canonical symplectic structures on both factors plus an
additional magnetic term.  This case extends both magnetic Lie-Poisson
reduction in \cite{MarsdenHamRed}, for which the $P$-factor is absent,
as well as the ``coupling to a Lie group'' scenario of \cite{KrMa86},
for which there is no magnetic term.   Since the proofs in this section are rather lengthy, we have relegated them to Appendix~\ref{appB} and we simply quote the relevant expressions here.

%
%The main result of this first part is the derivation of the reduced
%Poisson structure on $\mathfrak{h} ^{\ast}  \times P$.  As we will see, this
%Poisson structure is the sum of a term related to the Lie-Poisson
%structure on $\mathfrak{h} ^{\ast} $, the Poisson structure on $P$ associated to
%the symplectic structure, a certain number of interaction terms, and a
%term involving the magnetic two-form $\beta_\mu$.  Moreover, the
%momentum map $J$ associated to the action of $H$ on $T^{\ast}  H \times
%P$ will allow us to define a Poisson morphism taking this Poisson
%structure into one for which the interaction terms are absent.  There is a
%price to pay for eliminating the interaction terms, namely this introduces the
%non-equivariance cocycle of $J$ into the Poisson bracket.

The bulk of this section is then devoted to making these results explicit
for the case where $H = \operatorname{SE}(2) $, $P = \mathbb{R}^{2N}$ and the symplectic
form is the magnetic symplectic form derived in the previous section; see equations \eqref{betaexpl} and \eqref{magsympexpl}.
The induced Poisson structure with interaction terms will turn out to
be nothing but the BMR Poisson structure (\ref{bmrbracket}).
Secondly, the Poisson map induced by the momentum map $J$ is just the
Shashikanth map described in Theorem~\ref{thm:shashi}, so that the
transformed Poisson structure is the SMBK Poisson structure
(\ref{PoissSMBK}).

\subsection{Coupling to a Lie Group} \label{sec:gen}

As mentioned above, the purpose of this section is to generalize the reduction of phase spaces of the form $T ^{\ast} H \times P $, where $H$ is a Lie group acting on the symplectic manifold $P$. This is more general than results in the literature, since the symplectic structure on $T ^{\ast} H \times P $ may contain magnetic terms. We shall need this generalization in our application.  The proofs of the theorems in this section can be found in appendix~\ref{appB}.

Specifically, if $\omega _P$ denotes the symplectic form on $P$, we assume that
the symplectic structure $\Omega$ on $T^{\ast}  H \times P$ is given by 
\begin{equation} \label{gensympform}
  \Omega = \omega_{{\mathrm{can}}} - B,
\end{equation}
where $\omega_{\mathrm{can}}$ is the canonical symplectic structure on $T^{\ast} 
H$ and $B$ is a closed two-form on $G \times P$ such that $B_{|P} =
- \omega_P$, \emph{i.e.} the restriction of $B$ to $P$ is just
$-\omega_P$.  The latter ensures that $\Omega$ is a well-defined
symplectic form on $T^{\ast}  H \times P$. 

We assume furthermore that $H$ acts on $P$ and we denote the action of
$h \in H$ on $p \in P$ by $h \cdot p$.  Furthermore, we assume that
this action leaves invariant $\omega_P$.  The group $H$ then acts
diagonally on $T^{\ast}  H \times P$: if $h \in H$ and $(\alpha_g, p) \in
T^{\ast}  H \times P$, then 
\begin{equation} \label{diagaction}
  h \cdot (\alpha_g, p) = (T^{\ast}  L_{h^{-1}}(\alpha_g), h\cdot p).
\end{equation}
%
%Our aim now is to find an expression for the reduced Poisson structure
%on $\mathfrak{h} ^{\ast}  \times P$.  To do so, we will use the fact that
%$T^{\ast}  H \times P$ is diffeomorphic with $H \times \mathfrak{h} ^{\ast}  \times
%P$.  On this space the action (\ref{diagaction}) simply becomes left
%translation on the first factor and this will greatly simplify the
%Poisson reduction process.

\paragraph{The Momentum Map.}

Consider an element $\xi$ of $\mathfrak{h} $.  We denote the fundamental
vector field associated to $\xi$ on $T^{\ast}  H \times P$ by 
\[
  \xi_{T^{\ast}  H \times P}(\alpha_g, p) := 
  \frac{d}{dt} \exp(t\xi) \cdot (\alpha_g, p) \Big|_{t = 0},
\]
and similar for the fundamental vector field $\xi_{H \times P}$ on $H
\times P$.
Under certain topological assumptions (which are satisfied for the
solid-fluid system), there exists a momentum map $J: T^{\ast}  H \times P
\rightarrow \mathfrak{h} ^{\ast} $ for the action of $H$, defined by 
\[
  \mathbf{i}_{\xi_{T^{\ast}  H \times P}} \Omega = \mathbf{d} J_\xi, 
\]
where $J_\xi = \left<J, \xi\right>$.  It follows from the explicit form for
the symplectic form (\ref{gensympform}) that $J$ can be written as $J
= J_{\mathrm{can}} - \phi$, where $J_{\mathrm{can}}: T^{\ast}  H \rightarrow \mathfrak{h} ^{\ast} $ is
the momentum map associated to the canonical symplectic form on
$T^{\ast}  H$:
\begin{equation} \label{mommap}
  \mathbf{i}_{\xi_{T^{\ast}  H}} \omega_{\mathrm{can}} = \mathbf{d} \left<J_{\mathrm{can}}, \xi\right>
\end{equation}
while $\phi: H \times P \rightarrow \mathfrak{h} ^{\ast} $ is the
so-called \emph{$\mathcal{B}\mathfrak{g}$-potential} (borrowing the terminology of
\cite{MarsdenHamRed}):
\begin{equation} \label{bgpot}
  \mathbf{i}_{\xi_{H \times P}} B = \mathbf{d} \left<\phi, \xi\right>.
\end{equation}

The theory of $\mathcal{B}\mathfrak{g}$-potentials and Poisson reduction for
cotangent bundles $T^{\ast}  H$ endowed with a magnetic symplectic form
is further developed in \cite{MarsdenHamRed}.  Here, we are dealing
with a product $T^{\ast}  H \times P$, for which no such theory can be
found in the current literature.

\paragraph{Infinitesimal Equivariance of the Momentum Map.} Whereas
the canonical momentum map $J_{\mathrm{can}}$ is equivariant, the same does not
necessarily hold for the $\mathcal{B}\mathfrak{g}$-potential.  To measure
non-equivariance, we introduce a one-cocycle $\sigma: H \rightarrow
L(\mathfrak{h} , \mathcal{C}(H\times P))$ by defining first a family of functions
$\Gamma_{\eta, g}$ (where $\eta \in \mathfrak{h} $ and $g \in H$):
\[
\Gamma_{\eta, g}(h, p) = - \left<\phi(gh, gp), \eta\right> + \left<\mathrm{Ad}_g^{\ast} 
  \phi(h, p), \eta\right>, 
\]
for all $(h, p) \in H \times P$, 
and then putting $\sigma(g)\cdot \eta = \Gamma_{\eta, g}$.  This
definition follows the usual introduction of cocycles for momentum
maps; see for instance \cite{GuSt1984, MarsdenRatiu}.

The one-cocycle $\sigma$ induces a two-cocycle $\Sigma : \mathfrak{h}  \times
\mathfrak{h}  \rightarrow C^\infty(H \times P)$ given by 
\[
   \Sigma(\xi, \eta) = T_e \sigma_\eta(\xi), 
\]
where $\sigma_\eta : H \rightarrow C^\infty(H \times P)$ is defined by
$\sigma_\eta(g) = \sigma(g)\cdot \eta$.  It is not hard to verify that
$\Sigma$ is explicitly given by 
\begin{equation} \label{twococycle}
   \Sigma(\xi, \eta) = -\left<\phi, [\xi, \eta]\right> + B(\xi_{H\times P},
   \eta_{H\times P}).
\end{equation}

\paragraph{The Poisson Structure on $\mathfrak{h} ^{\ast}  \times P$.}

The canonical Poisson structure on $T^\ast H \times P$ associated to the symplectic structure (\ref{gensympform}) gives rise to a Poisson structure on the quotient space $\mathfrak{h}^\ast \times P$, which we denote by 
$\{\cdot, \cdot\}_{\mathrm{int}}$.  

The explicit form of this Poisson structure is derived in Appendix~\ref{appB}.  Before quoting this result, we 
first
introduce an operation $\star: \mathfrak{h}  \times \mathfrak{h} 
\rightarrow C^\infty(H \times P)$ defined by
\[
  \xi \star \eta := \left\{ \phi_{\xi|P},\phi_{\eta|P}\right\}_P,  
\]
where $\{\cdot, \cdot\}_P$ is the Poisson structure associated to
$\omega_P$, and an operation $\lhd: \mathfrak{h}  \times C^\infty(P)
\rightarrow C^\infty(H \times P)$ by putting
\[
  \xi \lhd F := \{\phi_{\xi|P}, F\}_P.
\]

Using these two operations, the reduced Poisson structure is given in the following theorem.  Note the interaction term due to curvature and the last two terms which are due to the coupling of the Lie group $H$ with the symplectic manifold $P$.

\begin{theorem} \label{thm:bmr}
  The reduced Poisson structure on $\mathfrak{h} ^{\ast}  \times P$ is given by 
\begin{align} 
  \{f, k\}_{\mathrm{int}} & = {\frac{\delta f}{\delta \mu}} \star {\frac{\delta k}{\delta \mu}} - \{f_{|P}, k_{|P}\}_P -
  B\left(\left({\frac{\delta f}{\delta \mu}}\right)_{H \times P}, \left({\frac{\delta k}{\delta \mu}}\right)_{H \times
      P}\right)  \nonumber \\
  & - {\frac{\delta f}{\delta \mu}} \lhd k + {\frac{\delta k}{\delta \mu}} \lhd f, \label{redpoisson}
\end{align}
for functions $f = f(\mu, x)$ and $k = k(\mu, x)$ on $\mathfrak{h} ^{\ast} 
\times P$.
\end{theorem}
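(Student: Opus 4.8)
The plan is to obtain $\{\cdot,\cdot\}_{\mathrm{int}}$ by genuine Poisson reduction: realize $\mathfrak{h}^\ast\times P$ as the quotient of $T^\ast H\times P$ by the diagonal $H$-action, and compute the bracket of two functions $f,k$ by lifting them to $H$-invariant functions $F=f\circ\pi$, $K=k\circ\pi$ on the total space and using $\{f,k\}_{\mathrm{int}}\circ\pi=\{F,K\}_{T^\ast H\times P}$. First I would left-trivialize $T^\ast H\cong H\times\mathfrak{h}^\ast$, so that the total space becomes $H\times\mathfrak{h}^\ast\times P$ with coordinates $(g,\nu,p)$ and the lifted diagonal action (\ref{diagaction}) reads $h\cdot(g,\nu,p)=(hg,\nu,h\cdot p)$. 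The quotient map is then $\pi(g,\nu,p)=(\nu,\,g^{-1}\cdot p)$, and an $H$-invariant function has the form $F(g,\nu,p)=f(\nu,g^{-1}\cdot p)$; this identification is what ties the derivative of $F$ in the group direction to its derivative in the $P$-direction through the infinitesimal action, and that coupling is precisely the source of the $\lhd$-terms.

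Next I would write down the bracket on the total space for $\Omega=\omega_{\mathrm{can}}-B$. It is cleanest to split $\Omega$ into three pieces: the canonical form $\omega_{\mathrm{can}}$ on $T^\ast H$ (whose left-trivialized expression carries the Lie--Poisson term $\langle\nu,[\,\cdot,\cdot\,]\rangle$ together with the dual pairing between the $\mathfrak{h}$- and $\mathfrak{h}^\ast$-directions), the pure-$P$ part of $-B$, which by the hypothesis $B_{|P}=-\omega_P$ is exactly $\omega_P$ and reproduces $\{\cdot,\cdot\}_P$, and the remaining mixed part of $B$ coupling the $H$-base to $P$. To contract this last, mixed part against Hamiltonian vector fields I would use the defining relation (\ref{bgpot}) of the $\mathcal{B}\mathfrak{g}$-potential, $\mathbf{i}_{\xi_{H\times P}}B=\mathbf{d}\langle\phi,\xi\rangle$, which converts the awkward magnetic contractions into Poisson brackets on $P$ of the functions $\phi_{\xi|P}$; these are what build the $\star$-operation and, together with the genuine two-form, the term $B\big((\delta f/\delta\mu)_{H\times P},(\delta k/\delta\mu)_{H\times P}\big)$.

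The computational core is then to solve $\mathbf{i}_{X_F}\Omega=\mathbf{d}F$ for the invariant $F$. Differentiating $F(g,\nu,p)=f(\nu,g^{-1}p)$ gives $\partial F/\partial\nu=\delta f/\delta\mu\in\mathfrak{h}$, a $P$-derivative equal to $\mathbf{d}(f_{|P})$ transported by $g$, and a group-direction derivative fixed by the previous two via the $g^{-1}p$ dependence. Feeding these into the three pieces of $\Omega$ and evaluating $\{F,K\}=\Omega(X_F,X_K)$ at a representative with $g=e$, I expect the $\omega_P$ piece to supply $-\{f_{|P},k_{|P}\}_P$, the mixed magnetic piece (through $\phi$) to supply $\delta f/\delta\mu\star\delta k/\delta\mu$ and the coupling terms $-\,\delta f/\delta\mu\lhd k+\delta k/\delta\mu\lhd f$, and the algebraic contribution from $\omega_{\mathrm{can}}$ and the genuine $B$ to combine into the remaining $-B(\cdots)$ term. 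Throughout I would check that $\{F,K\}$ is $H$-invariant so that it descends to the quotient; this uses the assumed $H$-invariance of $\omega_P$ and the behaviour of $\phi$ under $H$ encoded by the one-cocycle $\sigma$.

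The main obstacle is the bookkeeping of the single two-form $B$, which enters the computation in three distinct guises --- inside the momentum map through $\phi$ (so it deforms the identification of the reduced variable $\mu$), directly as the genuine magnetic contraction $B(\cdots)$, and via the normalization $B_{|P}=-\omega_P$. Reconciling these is where the cocycles enter: using the identity (\ref{twococycle}), $\Sigma(\xi,\eta)=-\langle\phi,[\xi,\eta]\rangle+B(\xi_{H\times P},\eta_{H\times P})$, I would reorganize the Lie--Poisson term coming from $\omega_{\mathrm{can}}$ together with the $\langle\phi,[\,\cdot,\cdot\,]\rangle$ piece so that the surviving algebraic contribution is packaged exactly as the stated $-B\big((\delta f/\delta\mu)_{H\times P},(\delta k/\delta\mu)_{H\times P}\big)$ term, with no separate Lie--Poisson term left over. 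Getting every sign consistent --- in particular the sign flip relative to \cite{MarsdenHamRed} induced by the right action in the application --- is the delicate part; I would fix conventions once, at the level of $\omega_{\mathrm{can}}$ in left trivialization, carry them through uniformly, and check the final bracket against the Jacobi identity as a consistency test.
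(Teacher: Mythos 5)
Your overall route is the same as the paper's (appendix~\ref{appB}): trivialize, compute the bracket upstairs by solving $\mathbf{i}_{X_K}\Omega = \mathbf{d} K$ with the $\Bg$-potential converting the magnetic contractions into brackets of the $\phi_\xi$'s, then restrict to $H$-invariant functions and invoke the Poisson reduction theorem. The only real difference is bookkeeping: you keep the $P$-factor spatial and put the twist into the invariant lifts $F(g,\nu,p)=f(\nu,g^{-1}\cdot p)$, whereas the paper's maps $\Lambda$ and $\lambda$ untwist $P$ as well (body coordinates), so that invariant functions are exactly the $g$-independent ones and all interaction terms sit in the pushed-forward symplectic form of lemma~\ref{lemma:sympbody}; the two computations are intertwined by the diffeomorphism $\lambda$ and are equivalent.

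The genuine gap is your final step. You propose to use (\ref{twococycle}), $\Sigma(\xi,\eta)=-\langle\phi,[\xi,\eta]\rangle+B(\xi_{H\times P},\eta_{H\times P})$, to merge the Lie--Poisson term coming from $\omega_{\mathrm{can}}$ into the $-B(\cdots)$ term ``with no separate Lie--Poisson term left over.'' This cannot work, for two reasons. First, in this computation $\phi$ only ever enters through its differential along $P$ --- that is exactly what builds $\star$ and $\lhd$ --- so no undifferentiated $\langle\phi,[\xi,\eta]\rangle$ term ever appears; such a term arises only in the proof of theorem~\ref{thm:shift}, after the substitution $\mu\mapsto\mu-\phi(e,x)$, which is where the cocycle identity is genuinely used. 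Second, and decisively, the Lie--Poisson term is $\langle\mu,[\delta k/\delta\mu,\delta f/\delta\mu]\rangle$ with $\mu$ the free coordinate on $\mathfrak{h}^{\ast}$, while (\ref{twococycle}) contains no $\mu$ at all: no identity built from $\phi$, $B$ and $\Sigma$ can absorb a term linear in $\mu$. If you carry your plan out honestly, that term survives in the reduced bracket. (Sanity check: take $P$ a point and $B=0$; every term on the right-hand side of (\ref{redpoisson}) then vanishes, yet reduction of $T^{\ast}H$ by left translation gives $\mathfrak{h}^{\ast}$ with the Lie--Poisson bracket, not the zero bracket.) You should also know that the paper's own proof has the same blind spot: in the proof of proposition~\ref{prop:poisson} the $\langle\mu,[\xi,\cdot]\rangle$ contribution from Cushman's formula is silently dropped from the $\mathfrak{h}^{\ast}$-component $\rho$ of $X_K$, which is how (\ref{eq:poisson}), and hence (\ref{redpoisson}), come out without a Lie--Poisson term --- an omission invisible in the $\operatorname{SE}(2)$ application only because the entries computed there involve the commuting generators $\mathbf{e}_x,\mathbf{e}_y$, and one that is inconsistent with theorem~\ref{thm:shift} and definition~\ref{def:natpoisson}, since the $\Sigma$-bracket does carry a Lie--Poisson term that a fiberwise momentum shift cannot create. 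So: right strategy, but the cancellation you flagged as the delicate step is precisely the step that fails; executed correctly, your computation yields (\ref{redpoisson}) plus a Lie--Poisson term in $\mu$.
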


\paragraph{Shifting Away the Interaction Terms.} 
The reduced Poisson structure $\{\cdot, \cdot\}_{\mathrm{int}}$ described in theorem~\ref{thm:bmr} is not canonical, \emph{i.e.} it is not the sum of the Lie-Poisson structure on $\mathfrak{h}^\ast$ and the canonical Poisson structure on $\mathbb{R}^{2N}$, but rather contains a number of 
\textbf{\emph{interaction terms}}.  However, using the $\Bg$-potential $\phi$, we can define a \textbf{\emph{shift map}} $\hat{\mathcal{S}}$ from $\mathfrak{h}^\ast \times \mathbb{R}^{2N}$ to itself, taking $\{\cdot, \cdot\}_{\mathrm{int}}$ into the canonical Poisson structure.
The price we have to pay for getting rid of interaction terms
is the introduction of the non-equivariance cocycle $\Sigma$ of $\phi$, as in the following definition.

\begin{definition} \label{def:natpoisson}
The natural Poisson structure on $\mathfrak{h} ^{\ast}  \times P$
is given by 
  \begin{equation} \label{poissonsimple} \{f, k\}_\Sigma =
    \{f_{|\mathfrak{h} ^{\ast} }, k_{|\mathfrak{h} ^{\ast} }\}_{\mathrm{l.p}} - \{f_{|P},
    k_{|P}\}_P - \Sigma\left( \frac{\delta f}{\delta\mu}, \frac{\delta
        k}{\delta\mu} \right)
  \end{equation}
  for functions $f = f(\mu, x)$ and $k = k(\mu, x)$ on $\mathfrak{h} ^{\ast} 
  \times P$.
\end{definition}

The main result of this section is then given in the following theorem.  It related the Poisson structure with interaction terms with the natural one (possibly with a cocycle).  The former will turn out the BMR Poisson structure, while the latter is nothing but the SMBK structure.  The cocycle will encode the effects of nonzero circulation on the rigid body.

\begin{theorem} \label{thm:shift}
  The map $\hat{\mathcal{S} }: \mathfrak{h} ^{\ast}  \times P \rightarrow \mathfrak{h} ^{\ast} 
  \times P$ given by 
\begin{equation} \label{hatshift}
  \hat{\mathcal{S} }(\mu, x) = (\mu - \phi(e, x), x).
\end{equation}  
  is a Poisson isomorphism taking the Poisson structure
  $\{\cdot, \cdot\}_{\mathrm{int}}$ with interaction terms into the Poisson
  structure $\{\cdot, \cdot\}_\Sigma$:
  \begin{equation} \label{poissonmap}
    \{ f \circ \hat{\mathcal{S} }, k \circ \hat{\mathcal{S} } \}_{\mathrm{int}} = \{f, k\}_\Sigma \circ
    \hat{\mathcal{S} }.   
  \end{equation}
\end{theorem}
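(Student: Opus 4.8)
The plan is to exhibit $\hat{\mathcal{S}}$ as a diffeomorphism and then verify the bracket identity \eqref{poissonmap} by a direct computation with functional derivatives, paralleling the classical momentum-shift argument for a charged particle sketched in the introduction. First observe that $\hat{\mathcal{S}}$ is a global diffeomorphism of $\mathfrak{h}^{\ast}\times P$ with inverse $(\mu,x)\mapsto(\mu+\phi(e,x),x)$, since the shift in the $\mathfrak{h}^{\ast}$-slot is independent of $\mu$. Hence it suffices to establish \eqref{poissonmap} for all $f,k$; and because both $\{\cdot,\cdot\}_{\mathrm{int}}$ and $\{\cdot,\cdot\}_\Sigma$ are bi-derivations, i.e. determined pointwise by the first differentials of their arguments, it is in fact enough to check the identity on a family of functions whose differentials span $T^{\ast}(\mathfrak{h}^{\ast}\times P)$ everywhere --- conveniently, the momentum-linear functions $f_\xi(\mu,x)=\langle\mu,\xi\rangle$ with $\xi\in\mathfrak{h}$ fixed, together with the pullbacks $F(\mu,x)=F(x)$ of functions $F\in C^\infty(P)$.

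The key preliminary computation is the behaviour of the two functional derivatives under the shift. Writing $\nu=\mu-\phi(e,x)$, the chain rule gives, because the shift is affine and $\mu$-independent in the fibre, $\frac{\delta(f\circ\hat{\mathcal{S}})}{\delta\mu}=\frac{\delta f}{\delta\mu}\circ\hat{\mathcal{S}}$, so the $\mathfrak{h}$-derivative is unchanged; whereas the $P$-differential acquires a correction, $\mathbf{d}_P(f\circ\hat{\mathcal{S}})=(\mathbf{d}_P f)\circ\hat{\mathcal{S}}-\langle\mathbf{d}_P\phi(e,\cdot),\frac{\delta f}{\delta\mu}\circ\hat{\mathcal{S}}\rangle$, the extra term coming precisely from the $x$-dependence of $\phi(e,x)$ sitting in the momentum slot. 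It is this correction that feeds the $\Bg$-potential into the $P$-bracket and the $\lhd$-terms of \eqref{redpoisson} and, ultimately, manufactures the Lie--Poisson term and the cocycle of $\{\cdot,\cdot\}_\Sigma$. For the generators the correction is transparent: $f_\xi\circ\hat{\mathcal{S}}=\langle\mu,\xi\rangle-\phi_\xi(e,\cdot)$, whose $P$-restriction is (a $\mu$-constant plus) $-\phi_{\xi|P}$, while $F\circ\hat{\mathcal{S}}=F$ is untouched.

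With these derivatives in hand I would substitute into \eqref{redpoisson} on the left and into \eqref{poissonsimple} on the right and match the three cases. The case $(F,F')$ is immediate: both sides reduce to $-\{F,F'\}_P$ and $\hat{\mathcal{S}}$ fixes the $P$-factor. The mixed case $(f_\xi,F)$ reduces, after inserting the correction term, to the identity $\xi\lhd F=\{\phi_{\xi|P},F\}_P$, which matches the momentum-map coupling of $\{\cdot,\cdot\}_\Sigma$. The decisive case is $(f_\xi,f_\eta)$: here one must show that the $\star$-term $\xi\star\eta$, the curvature term $B(\xi_{H\times P},\eta_{H\times P})$, and the two $\lhd$-terms of \eqref{redpoisson}, evaluated on the shifted functions, recombine into $\langle\nu,[\xi,\eta]\rangle-\Sigma(\xi,\eta)$ with $\nu=\mu-\phi(e,x)$. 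The engine for this is the two-cocycle formula \eqref{twococycle}, $\Sigma(\xi,\eta)=-\langle\phi,[\xi,\eta]\rangle+B(\xi_{H\times P},\eta_{H\times P})$, together with the fact that $B_{|P}=-\omega_P$ forces $\phi_{\xi|P}$ to be (minus) a momentum map for the $H$-action on $(P,\omega_P)$, which is what converts $\star$ and $\lhd$ into $\phi_{[\xi,\eta]|P}$- and $\Sigma$-contributions.

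The main obstacle is precisely this last case: the bookkeeping by which the $x$-dependent shift $\phi(e,x)$ in the momentum slot, acting through the $P$-bracket and coupling terms of the interaction structure, reproduces on the nose the Lie--Poisson bracket of the shifted momentum $\nu$ and leaves behind exactly the non-equivariance cocycle $\Sigma$ --- no more and no less. One has to verify that every genuinely $x$-dependent piece either cancels or assembles into $\langle\phi(e,x),[\xi,\eta]\rangle$ and the $B$-term of \eqref{twococycle}, which is where the infinitesimal-equivariance relation for $\phi$, and consistent sign conventions for the right action and for Cartan's equation \eqref{cartan}, must be used with care. Once this case is settled, bilinearity and the derivation property extend the identity to arbitrary $f,k$, establishing that $\hat{\mathcal{S}}$ is a Poisson isomorphism from $\{\cdot,\cdot\}_{\mathrm{int}}$ to $\{\cdot,\cdot\}_\Sigma$.
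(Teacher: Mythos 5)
Your reduction to a spanning family ($f_\xi(\mu,x)=\langle\mu,\xi\rangle$ together with pullbacks from $P$) and your chain rule for the shifted derivatives are both correct, and your first two cases go through; but the decisive case $(f_\xi,f_\eta)$ fails as you have set it up, for a structural reason visible by pure $\mu$-counting. After the shift, $\frac{\delta(f_\xi\circ\hat{\mathcal{S}})}{\delta\mu}=\xi$ and $(f_\xi\circ\hat{\mathcal{S}})_{|P}=\mathrm{const}-\phi_{\xi|P}$, so every ingredient you propose to use on the left-hand side --- $\xi\star\eta$, the $P$-bracket of the restrictions, $B(\xi_{H\times P},\eta_{H\times P})$, and the two $\lhd$-terms of \eqref{redpoisson} --- is a function of $x$ alone. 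The right-hand side, however, is $\{f_\xi,f_\eta\}_{\mathrm{l.p}}$ evaluated at $\nu=\mu-\phi(e,x)$ minus $\Sigma(\xi,\eta)$, and the Lie--Poisson part is linear in $\mu$ (nonzero whenever $[\xi,\eta]\neq0$, e.g. for $\mathfrak{se}(2)$). No recombination of $\mu$-independent quantities can ``manufacture'' $\langle\mu,[\xi,\eta]\rangle$; that term must already be present in $\{\cdot,\cdot\}_{\mathrm{int}}$ as a Lie--Poisson summand $\pm\langle\mu,[\frac{\delta f}{\delta\mu},\frac{\delta k}{\delta\mu}]\rangle$. Such a summand does arise from the Cushman term $\langle\mu,[\xi,\eta]\rangle$ in the body-coordinates symplectic form \eqref{bigsympstruct}, but it has been dropped in the printed formulas \eqref{eq:poisson} and \eqref{redpoisson} (sanity check: with $B=0$ and $P$ a point, \eqref{redpoisson} would return the zero bracket on $\mathfrak{h}^{\ast}$ instead of the Lie--Poisson bracket). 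So to make your computation close you must first rederive $\{\cdot,\cdot\}_{\mathrm{int}}$ from \eqref{bigsympstruct}, carry that term along, and let the shift corrections supply only the $x$-dependent pieces $\langle\phi(e,x),[\xi,\eta]\rangle$ and the $B$-term of \eqref{twococycle}.

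A second inaccuracy sits in what you call the engine of the last case: $B_{|P}=-\omega_P$ does not make $\phi_{\xi|P}$ (minus) a momentum map for $(P,\omega_P)$. Restricting \eqref{bgpot} to $P$-vectors at $(e,x)$ gives $\mathbf{d}_x\phi_\xi(e,\cdot)=-\mathbf{i}_{\xi_P}\omega_P+B\bigl((\xi,0),(0,\cdot)\bigr)$, and the mixed $H$--$P$ components of $B$ are nonzero precisely in the application at hand (the $\mathbf{d}\Theta$ part of $\beta_\mu$ in \eqref{betaexpl}), so your conversion of $\star$- and $\lhd$-terms into $\phi_{[\xi,\eta]}$- and $\Sigma$-contributions must track these extra terms as well. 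It is worth noting that the paper's own proof avoids this entire minefield: it never touches the reduced bracket formulas, but shows upstairs (lemma~\ref{lemma:pf}) that $\mathcal{S}(g,\mu,x)=(g,\mu-\phi(e,x),x)$ is a symplectomorphism from \eqref{bigsympstruct} to the interaction-free form \eqref{pfomega}, observes that $\mathcal{S}$ commutes with left translation in $g$ and hence descends to $\hat{\mathcal{S}}$, and lets theorem~\ref{thm:poissonsimple} together with the functoriality of Poisson reduction deliver \eqref{poissonmap}. If you wish to keep a direct verification, anchor it to the symplectic form rather than to the printed bracket \eqref{redpoisson}.
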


\paragraph{The Symplectic Leaves.}  The shifted Poisson
structure $\{\cdot, \cdot\}_\Sigma$ is the sum of the Lie-Poisson
structure on $\mathfrak{h} ^{\ast} $, the canonical Poisson structure on $P$,
and a cocycle term.  If $\Sigma = 0$, this allows us to write down a
convenient expression for the symplectic leaves in $\mathfrak{h} ^{\ast}  \times
P$: 

\begin{proposition} \label{prop:sympleaf} For $\Sigma = 0$, the symplectic
  leaves in $\mathfrak{h} ^{\ast}  \times P$ of the Poisson structure $\{\cdot,
  \cdot\}_{\Sigma = 0}$ are of the form $\mathcal{O}_\mu \times P$, where
  $\mathcal{O}_\mu$ is the co-adjoint orbit of an element $\mu \in
  \mathfrak{h} ^{\ast} $.
\end{proposition}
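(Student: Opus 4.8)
The plan is to recognize that, once $\Sigma = 0$, the bracket (\ref{poissonsimple}) is a genuine \emph{product} Poisson structure on $\mathfrak{h}^\ast \times P$, and then to combine two standard facts: that the symplectic leaves of a product of Poisson manifolds are the products of the symplectic leaves of the factors, and that by the Kirillov--Kostant--Souriau theorem the symplectic leaves of $\mathfrak{h}^\ast$ carrying its Lie--Poisson bracket are exactly the co-adjoint orbits.

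First I would check that $\{\cdot,\cdot\}_{\Sigma=0}$ decouples across the two factors. For $f = f(\mu)$ depending on $\mu$ alone and $k = k(x)$ depending on $x$ alone, the restriction $f_{|P}$ is constant in $x$ while $k_{|\mathfrak{h}^\ast}$ is constant in $\mu$, so both surviving terms of $\{f,k\}_{\Sigma=0} = \{f_{|\mathfrak{h}^\ast}, k_{|\mathfrak{h}^\ast}\}_{\mathrm{l.p}} - \{f_{|P}, k_{|P}\}_P$ vanish. Hence the Poisson tensor splits as the direct sum of the Lie--Poisson tensor on $\mathfrak{h}^\ast$ and the negative of the canonical tensor on $P$; equivalently, $(\mathfrak{h}^\ast \times P, \{\cdot,\cdot\}_{\Sigma=0})$ is the product of the Poisson manifolds $(\mathfrak{h}^\ast, \{\cdot,\cdot\}_{\mathrm{l.p}})$ and $(P, -\omega_P)$. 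Since $-\omega_P$ is again a symplectic form, the second factor is symplectic.

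Next I would pass to the characteristic (Hamiltonian) distribution $\mathrm{Im}\,\pi^\sharp$, which for a product splits compatibly with the tangent decomposition $T_{(\mu,x)}(\mathfrak{h}^\ast \times P) = T_\mu \mathfrak{h}^\ast \oplus T_x P$ as the direct sum of the characteristic distributions of the factors. Consequently the integral leaf of the total distribution through $(\mu,x)$ is the product of the factor leaves. The leaf through $\mu$ in $\mathfrak{h}^\ast$ is the co-adjoint orbit $\mathcal{O}_\mu$ by Kirillov--Kostant--Souriau; and because $P$ is symplectic its Poisson tensor is nondegenerate, so its characteristic distribution is all of $TP$ and (taking $P$ connected, as for $P = \mathbb{R}^{2N}$) its unique leaf is $P$ itself, the overall sign change from $-\omega_P$ leaving the leaf unaffected. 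Combining these, the symplectic leaf through $(\mu,x)$ is $\mathcal{O}_\mu \times P$, which is the assertion.

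The only nontrivial point is the product-of-leaves statement: the routine part is the splitting of the Poisson tensor verified above, and the conceptual content is that an integrable distribution which splits as a direct sum over a product manifold integrates to products of the respective leaves. Everything else is a direct invocation of Kirillov--Kostant--Souriau together with the nondegeneracy of $\omega_P$, so I would expect no serious obstacle beyond stating this product structure cleanly.
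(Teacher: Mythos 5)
Your proof is correct, but it takes a genuinely different route from the paper's. You argue intrinsically on the quotient: with $\Sigma = 0$ the bracket (\ref{poissonsimple}) splits as the direct sum of the Lie--Poisson tensor on $\mathfrak{h}^\ast$ and the negative of the canonical tensor on $P$, so the characteristic distribution splits, leaves of a product Poisson manifold are products of the factor leaves, and these factor leaves are the co-adjoint orbits (Kirillov--Kostant--Souriau) and, by nondegeneracy of $-\omega_P$, the connected symplectic manifold $P$ itself. The paper instead argues via reduction, following proposition~10.3.3 of \cite{MarsdenHamRed}: the symplectic leaves of the reduced Poisson manifold are precisely the symplectic reduced spaces, and reducing the product $T^\ast H \times P$ (carrying the shifted, interaction-free symplectic form) at $\mu$ gives $\mathcal{O}_\mu \times P$, since point reduction of $T^\ast H$ yields the co-adjoint orbit while the factor $P$ is carried along. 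Your route is more elementary and self-contained --- no momentum map or reduction theorem is needed, only standard Poisson geometry --- though the product-of-leaves fact you invoke is cleanest to justify by viewing leaves as orbits of the family of Hamiltonian vector fields: functions depending on a single factor show that the orbit through $(\mu, x)$ contains $\mathcal{O}_\mu \times \{x\}$ and $\{\mu\} \times P$, while every Hamiltonian vector field is tangent to $\mathcal{O}_\mu \times P$, so the orbit cannot leave it. What the paper's viewpoint buys is the link to the reduction-by-stages machinery and, in particular, the remark that follows the proposition: when $\Sigma \ne 0$, non-equivariant reduction immediately suggests replacing $\mathcal{O}_\mu$ by an orbit of a suitable affine action, whereas your splitting argument is not available in that generality, since the cocycle term obstructs the clean decoupling of the bracket (unless $\Sigma$ happens to be constant).
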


The proof follows that of proposition~10.3.3 in \cite{MarsdenHamRed}
and relies on the fact that the symplectic leaves are precisely the
symplectic reduced spaces.  But since the Poisson structure on the
reduced space is simply the sum of the Lie-Poisson and the canonical
Poisson structure, the reduced space at $\mu$ is $\mathcal{O}_\mu \times P$,
as above.  When $\Sigma \ne 0$, we expect the co-adjoint orbit
$\mathcal{O}_\mu$ to be replaced by an orbit $\mathcal{O}_\mu^\Sigma$ of a
suitable \textbf{\emph{affine}} action of $H$ on $\mathfrak{h} ^{\ast} $.

\subsection{The Fluid-Solid System}

Now we are ready to specialize the theory in the previous sections to the case of a solid interacting dynamically with point vortices. Recall that our goal is to find the Hamiltonian structure for this problem using reduction techniques and to use the shifting map developed in the preceding section to relate the two Hamiltonian structures in the literature.

Recall that the reduced phase space for the solid-fluid system is
$T^{\ast}  \operatorname{SE}(2)  \times \mathbb{R}^{2N}$.  Now, the group $\operatorname{SE}(2) $ acts
on $\operatorname{SE}(2)  \times \mathbb{R}^{2N}$ by the diagonal left action, denoted by
$\Phi$ and given in inertial coordinates by
\[
\Phi_h(g; \mathbf{x}_1, \ldots, \mathbf{x}_N) = (hg; h\mathbf{x}_1, \ldots,
h\mathbf{x}_N).
\]
Hence, $\operatorname{SE}(2) $ acts from the left on $T^{\ast}  \operatorname{SE}(2)  \times \mathbb{R}^{2N}$
using the cotangent lift in the first factor,  and thus, this action leaves the Hamiltonian
(\ref{totalhamiltonian}) invariant.

This system is of the form $T^{\ast}  H \times P$, as discussed in the
previous section, where $H = \operatorname{SE}(2) $ and $P = \mathbb{R}^{2N}$.  We now apply
the results of that section to divide out the $\operatorname{SE}(2) $-symmetry and
obtain a system on $\mathfrak{se}(2)^{\ast}  \times \mathbb{R}^{2N}$.  

This reduction is similar to the passage from inertial to body coordinates for the
rigid body (see for example \cite{MarsdenRatiu}).
To see this, notice that the space $\mathfrak{se}(2)^{\ast}  \times \mathbb{R}^{2N}$
is obtained from $T^{\ast}  \operatorname{SE}(2)  \times \mathbb{R}^{2N}$ by dividing out by the
diagonal $\operatorname{SE}(2) $-action: the quotient mapping is given by
\[
  \rho : T^{\ast}  \operatorname{SE}(2)  \times \mathbb{R}^{2N}  \rightarrow 
  \mathfrak{se}(2)^{\ast}  \times \mathbb{R}^{2N},
\]
defined as 
\[
  \rho(g, \alpha_g; \mathbf{x}_1, \ldots, \mathbf{x}_N) = 
  (T^{\ast}  L_g(\alpha_g); \mathbf{X}_1, \ldots, \mathbf{X}_N), 
\]
where, if $g = (R, \mathbf{x}_0)$, then $\mathbf{x}_i$ and $\mathbf{X}_i$ are
related by 
\begin{equation} \label{transition}
  \mathbf{x}_i = R\mathbf{X}_i + \mathbf{x}_0.  
\end{equation}
In other words, if $\mathbf{x}_i$ describes the location of the $i$th
vortex in inertial coordinates, then $\mathbf{X}_i$ is its location in
a frame fixed to the body.

\begin{proposition} The magnetic symplectic structure
 \textup{ (\ref{magsympexpl})} is invariant under the action of $\operatorname{SE}(2) $ on
  $T^{\ast}  \operatorname{SE}(2)  \times \mathbb{R}^{2N}$ described above.
\end{proposition}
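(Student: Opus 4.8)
The plan is to show $\Phi_h^{\ast}\Omega_\mathcal{B} = \Omega_\mathcal{B}$ for every $h \in \operatorname{SE}(2)$, where $\Phi_h$ is the diagonal action, by splitting $\Omega_\mathcal{B} = \Omega_{\mathrm{can}} - \beta_\mu$ (see \eqref{magsympexpl}) and treating the two summands separately. The canonical piece is immediate: the diagonal action acts on the $T^{\ast}\operatorname{SE}(2)$-factor by the cotangent lift of the left translation $L_h$, and cotangent lifts of diffeomorphisms preserve the tautological one-form and hence $\Omega_{\mathrm{can}}$ (see \cite{MarsdenRatiu}), while $\Omega_{\mathrm{can}}$ ignores the $\mathbb{R}^{2N}$-factor. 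Everything therefore reduces to proving that the magnetic two-form $\beta_\mu$ of Theorem~\ref{thm:magform} is invariant.

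First I would split $\beta_\mu$ according to \eqref{betaexpl} into its ``vortex'' part $-\sum_i \Gamma_i\, d\mathbf{x}(\mathbf{v}_i,\mathbf{w}_i)$ and its ``fluid'' part $\sum_i \Gamma_i\, \mathbf{d}\Theta(g,\mathbf{x}_i)$. The vortex part is invariant because $\operatorname{SE}(2)$ acts on each $\mathbb{R}^2$-factor by an orientation-preserving Euclidean isometry $\mathbf{x}_i \mapsto h\mathbf{x}_i$, whose linear part $R_h$ satisfies $\det R_h = 1$; hence $d\mathbf{x}(R_h\mathbf{v}_i, R_h\mathbf{w}_i) = d\mathbf{x}(\mathbf{v}_i,\mathbf{w}_i)$, and the weights $\Gamma_i$ are unchanged. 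Since the action is diagonal and does not permute the vortices, it carries the $i$-th summand to the $i$-th summand, so the entire sum is preserved.

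The substance of the argument is the invariance of the fluid part, for which it suffices, since pullback commutes with $\mathbf{d}$, to show that the one-form $\Theta$ of \eqref{theta} is $\operatorname{SE}(2)$-invariant. The key observation is that, by the Lamb expansion preceding Theorem~\ref{thm:magform}, $\Theta(g,\mathbf{x})(\dot g,\mathbf v)$ is nothing but the full stream function $\Psi'(g,\dot g;\mathbf{x})$ of the fluid motion induced by the body velocity $\dot g$; in particular one must treat $\Theta$ as a whole and not attempt to show invariance of the coordinate pieces $\Psi_A'$ and $\mathbf{d}\xi_A$ separately, since $x_0, y_0, \beta$ are not invariant. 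Thus $(\Phi_h^{\ast}\Theta)(g,\mathbf{x})(\dot g,\mathbf v) = \Psi'(hg, TL_h\dot g; h\mathbf{x})$, and I would deduce its equality with $\Psi'(g,\dot g;\mathbf{x})$ from the invariance of the velocity potential established in Proposition~\ref{prop:invariance}: differentiating the harmonic-conjugacy relation \eqref{HC} and using that $h$ acts as an orientation-preserving isometry, so that it commutes with the gradient and preserves both $d\mathbf{x}$ and the contraction appearing in \eqref{HC}, shows that $\mathbf{d}\bigl[\Psi'(hg,TL_h\dot g;h\mathbf{x})\bigr]$ and $\mathbf{d}\Psi'(g,\dot g;\mathbf{x})$ coincide, so the two stream functions differ only by a constant.

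The main obstacle is precisely this last step of transferring invariance from $\Phi$ to its harmonic conjugate. The care needed is twofold: to verify that \eqref{HC} is genuinely covariant under the Euclidean action, which rests on $R_h \in \operatorname{SO}(2)$ being an orientation-preserving isometry intertwining $\nabla$ with the area form $d\mathbf{x}$ used to define the conjugate, and to pin down the additive constant, which vanishes because the elementary stream functions \eqref{stream} decay at infinity. An alternative, more abstract route is to observe that $\beta_\mu$ descends from $\mathbf{d}\alpha_\mu = \mathbf{d}\langle \mu, \mathcal{A}\rangle$ and that the Neumann connection $\mathcal{A}$ is $\operatorname{SE}(2)$-invariant by invariance of the metric on $Q$; however, tracking the interaction of the residual $\operatorname{SE}(2)$-action with the fixed vorticity $\mu$ and the $\mathrm{Diff}_{\mathrm{vol}}$-reduction makes the explicit computation above more transparent, so I would present the explicit argument as the primary proof.
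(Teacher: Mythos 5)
Your proposal is correct and follows essentially the same route as the paper's own proof: split $\Omega_\mathcal{B}$ into the canonical part, the vortex part, and the stream-function term $\mathbf{d}\Theta$, and deduce invariance of $\Theta$ from Proposition~\ref{prop:invariance} together with the harmonic conjugacy of $\Phi$ and $\Psi$. The only difference is that you spell out the details the paper leaves implicit (covariance of the conjugacy relation \eqref{HC} under orientation-preserving isometries and fixing the additive constant via decay at infinity), which strengthens rather than changes the argument.
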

\begin{proof}
  Recall the expression (\ref{magsympexpl}) for $\Omega_\mathcal{B}$, where
  $\beta_\mu$ is given by (\ref{betaexpl}).  It is a standard result
  (see for instance \cite{CuBa97}) that the canonical symplectic form
  $\Omega_{\mathrm{can}}$ on $T^{\ast}  \operatorname{SE}(2) $ is invariant under the left action
  of $\operatorname{SE}(2) $, and a similar result holds for the form $\sum_{i = 1}^N
  \Gamma_i \mu(\mathbf{v}_i, \mathbf{w}_i)$.  The only thing that remains to
  be shown is that $\Theta$ is $\operatorname{SE}(2) $-invariant, but this follows
  from the $\operatorname{SE}(2) $-invariance of $\Psi$, which is itself a consequence
  of proposition~\ref{prop:invariance} and the fact that $\Phi$ and
  $\Psi$ are harmonic conjugates.
 \end{proof}

\paragraph{The Momentum Map.}

Recall from section~\ref{sec:gen} that the momentum map $J$ for the
action of $\operatorname{SE}(2) $ on $T^{\ast}  \operatorname{SE}(2)  \times \mathbb{R}^{2N}$ is the difference
of two separate parts: $J = J_{\mathrm{can}} - \phi$, where $J_{\mathrm{can}}$ is the
momentum map (\ref{mommap}) defined by the canonical symplectic form
on $T^{\ast}  \operatorname{SE}(2) $, while $\phi$ is the so-called $\Bg$-potential
(\ref{bgpot}).

The momentum map $J$ is a map from $T^{\ast}  \operatorname{SE}(2)  \times \mathbb{R}^{2N}$ to
$\mathfrak{se}(2)^{\ast} $.  It will be convenient to identify $T^{\ast}  \operatorname{SE}(2) $
with $\operatorname{SE}(2)  \times \mathfrak{se}(2)^{\ast} $ through left translations, and to
use the fact that $\mathfrak{se}(2)^{\ast} $ is isomorphic to $\mathbb{R}^3$, so that
$J$ is a collection of three functions $(J^x, J^y, J^\Omega)$ on
$\mathbb{R}^{2N} \times \operatorname{SE}(2)  \times \mathfrak{se}(2)^{\ast} $.  A typical element of
that space will be denoted as $(\mathbf{x}_1, \ldots, \mathbf{x}_N; g,
\mathbf{\Pi})$, where $g = (R, \mathbf{x}_0)$ and $\mathbf{\Pi} =
(\Pi_x, \Pi_y, \Pi_\omega)$, but for the sake of clarity we will
usually suppress the argument of $J$.

\begin{proposition}
  The momentum map $J$ associated to the $\operatorname{SE}(2) $-symmmetry represents
  the spatial momentum of the solid-fluid system, and is given by $J =
  (J_x, J_y, J_\Omega)$, where  
  \begin{align}
\begin{pmatrix}
J_x \\ J_y 
\end{pmatrix}%(g, \mathbf{\Pi})  
& = R 
\begin{pmatrix}
  \Pi_x \\ \Pi_y 
\end{pmatrix}
+ \sum_{i=1}^N \Gamma_i \mathbf{x}_i \times \mathbf{e}_3 - \sum_{i=1}^N
\Gamma_i 
\begin{pmatrix}
  \Psi_x(\mathbf{x}_i) \\ \Psi_y(\mathbf{x}_i)
\end{pmatrix}
+ \Gamma \mathbf{x}_0 \times \mathbf{e}_3 \label{fullmommmap} \\
J_\Omega & = \Pi_\Omega - \sum_{i=1}^N \frac{\Gamma_i}{2} (x_i^2 +
y_i^2) - \sum_{i=1}^N\Gamma_i \Psi_\omega(\mathbf{x}_i), \nonumber
  \end{align}
  where $\Gamma = \sum_{i = 1}^N \Gamma_i$ is the total vortex
  strength.
\end{proposition}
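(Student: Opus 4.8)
The plan is to use the decomposition $J = J_{\mathrm{can}} - \phi$ from section~\ref{sec:gen}, compute the two pieces separately, and recombine them. The splitting has a transparent physical reading: $J_{\mathrm{can}}$ is the momentum of the bare rigid body transported to the spatial frame, while $-\phi$ gathers the impulse carried by the point vortices together with the corrections carried by the ambient fluid through its stream functions.

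First I would compute $J_{\mathrm{can}}$. The action on the first factor is the cotangent lift of left translation, so the momentum map \eqref{mommap} for the canonical form on $T^\ast\operatorname{SE}(2)$ is the standard spatial momentum map, which in the left trivialization $T^\ast\operatorname{SE}(2)\cong\operatorname{SE}(2)\times\mathfrak{se}(2)^\ast$ used in the statement is $J_{\mathrm{can}}(g,\mathbf{\Pi}) = \mathrm{Ad}^\ast_{g^{-1}}\mathbf{\Pi}$. I would make this explicit by reading off the adjoint action from the matrix description \eqref{eucgroup}, namely $\mathrm{Ad}_{(R,\mathbf{x}_0)}(\Omega,\mathbf{V}) = (\Omega,\, R\mathbf{V} - \Omega\,\mathbf{e}_3\times\mathbf{x}_0)$, and dualizing against the pairing that identifies $\mathfrak{se}(2)^\ast$ with $\mathbb{R}^3$. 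This produces the translational part $R(\Pi_x,\Pi_y)$ and a rotational part built from the body angular momentum and the moment of the linear momentum about the origin.

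The technical heart of the proof is the computation of the $\Bg$-potential $\phi$ from its defining relation \eqref{bgpot}, $\mathbf{i}_{\xi_{H\times P}}\beta_\mu = \mathbf{d}\langle\phi,\xi\rangle$, using the explicit magnetic form \eqref{betaexpl}. For $\xi=(\Omega_\xi,\mathbf{V}_\xi)\in\mathfrak{se}(2)$ the generator of the diagonal action on $\operatorname{SE}(2)\times\mathbb{R}^{2N}$ is left translation on the group factor together with the infinitesimal Euclidean motion $\mathbf{x}_i\mapsto\Omega_\xi(\mathbf{e}_3\times\mathbf{x}_i)+\mathbf{V}_\xi$ on each vortex. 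Contracting this generator into the two pieces of $\beta_\mu$, I expect the $-\sum_i\Gamma_i\,d\mathbf{x}$ piece to yield the vortex linear impulse $\sum_i\Gamma_i\,\mathbf{x}_i\times\mathbf{e}_3$ and angular impulse $-\tfrac12\sum_i\Gamma_i\|\mathbf{x}_i\|^2$, while the coupling piece $\sum_i\Gamma_i\,\mathbf{d}\Theta$, with $\Theta$ as in \eqref{theta}, supplies the stream-function terms $\sum_i\Gamma_i(\Psi_x(\mathbf{x}_i),\Psi_y(\mathbf{x}_i))$ and $\sum_i\Gamma_i\Psi_\omega(\mathbf{x}_i)$ (the latter vanishing for the circular cylinder, where $\Psi_\omega=0$). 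I would then verify that the contracted one-form is exact and read off $\phi$ by comparison, using the explicit elementary stream functions \eqref{stream}.

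Finally I would assemble $J = J_{\mathrm{can}} - \phi$ and simplify, using the rotational invariance of the planar cross product and the identity $\mathbf{e}_3\times\mathbf{a}=(-a_y,a_x)$ to bring everything into the spatial frame, recovering \eqref{fullmommmap}. The step I expect to be the main obstacle is the evaluation of $\phi$: because $\beta_\mu$ genuinely couples the group and vortex factors through $\Theta$, the contraction of the coupling term must be handled with care, and it is the behaviour of the stream functions at infinity that generates the additional term $\Gamma\,\mathbf{x}_0\times\mathbf{e}_3$ proportional to the total strength $\Gamma=\sum_i\Gamma_i$. This term is exactly the non-equivariance flagged earlier — it disappears when $\Gamma=0$ — so checking exactness of the contracted one-form while correctly tracking this boundary contribution is the delicate point.
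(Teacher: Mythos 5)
Your strategy coincides with the paper's: both decompose $J = J_{\mathrm{can}} - \phi$ as in section~\ref{sec:gen}, split $\phi$ according to the two pieces of $\beta_\mu$ in (\ref{betaexpl}) (the pure-vortex part and the coupling part $\mathbf{d}\Theta$), and then assemble. Your vortex impulses and stream-function terms agree with the paper's $\phi_{\mathrm{vortex}}$ (which it quotes, up to sign, from \cite{AdRa1988}) and $\phi_{\mathrm{stream}}$. The one methodological difference is that the paper never needs your ``verify that the contracted one-form is exact'' step: since $\Theta$ is $\operatorname{SE}(2)$-invariant (a consequence of proposition~\ref{prop:invariance}), Cartan's magic formula gives $\mathbf{i}_{\xi}\mathbf{d}\Theta = -\mathbf{d}(\mathbf{i}_{\xi}\Theta)$, so the potential of the coupling part is read off immediately as $\mathbf{i}_{\xi}\Theta$ and exactness is automatic; this is the remark following Theorem~7.1.1 of \cite{MarsdenHamRed} that the paper invokes.

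The genuine gap is in your final paragraph, the step you yourself flag as the crux. The term $\Gamma\,\mathbf{x}_0\times\mathbf{e}_3$ is \emph{not} generated by the behaviour of the stream functions at infinity, and there is no boundary contribution to track when computing $\phi$. Once $\beta_\mu$ has been put in the closed form (\ref{betaexpl}), the defining relation (\ref{bgpot}) is solved by a pointwise, purely finite-dimensional contraction on $\operatorname{SE}(2)\times\mathbb{R}^{2N}$: all fluid integrals were already performed in section~\ref{sec:diff}, and the only true boundary term of the problem, the circulation integral (\ref{curvcirc}), was disposed of there by the zero-circulation hypothesis. Searching for an asymptotic or boundary effect at this stage would leave your proof stalled exactly where you expect the difficulty to lie. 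What actually produces the $\Gamma$-term is elementary bookkeeping between body-centered and spatial quantities: the stream functions (\ref{stream}) and the body-frame impulses are referred to the center $\mathbf{x}_0$, whereas (\ref{fullmommmap}) is written in spatial coordinates, and $\sum_{i}\Gamma_i(\mathbf{x}_i - \mathbf{x}_0)\times\mathbf{e}_3 = \sum_{i}\Gamma_i\,\mathbf{x}_i\times\mathbf{e}_3 - \Gamma\,\mathbf{x}_0\times\mathbf{e}_3$; this is precisely the content of the spatial-versus-body relation (\ref{relation}). The same bookkeeping governs the angular component: your (correct) $J_{\mathrm{can}} = \mathrm{Ad}^{\ast}_{g^{-1}}\mathbf{\Pi}$ carries the moment term $(\mathbf{x}_0\times\mathbf{J}_{\mathbf{x}})\cdot\mathbf{e}_3$, which is absent from the stated $J_\Omega$ because, per (\ref{relation}), the third component of (\ref{fullmommmap}) is the body rather than the spatial angular momentum, so this term must be absorbed consistently. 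Your identification of the $\Gamma$-term with non-equivariance is right in spirit, but note that the cocycle $\Sigma = -\Gamma\,\mathbf{e}^{\ast}_x\wedge\mathbf{e}^{\ast}_y$ is likewise computed pointwise from (\ref{twococycle}), not from any analysis at infinity.
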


\begin{proof}
The canonical part $J_{\mathrm{can}}$ can be obtained through a standard 
calculation for cotangent lifted actions (see \cite{MarsdenRatiu}).  The result is
\[
\begin{pmatrix}
J_x \\ J_y 
\end{pmatrix}%(g, \mathbf{\Pi})  
= R 
\begin{pmatrix}
  \Pi_x \\ \Pi_y 
\end{pmatrix}
\quad \text{and} \quad  J_\Omega%(g, \mathbf{\Pi}) 
= \Pi_\Omega,
\]
for $g = (R, \mathbf{x}_0)$ and $\mathbf{\Pi} = (\Pi_x, \Pi_y,
\Pi_\Omega)$.  

For the $\Bg$-potential, note that the form $\beta_\mu$ is the sum of
a ``pure vortex'' part (\emph{i.e.} not involving the fluid) and a
part involving the stream functions (\emph{i.e.} the form $\Theta$).

The $\Bg$-potential corresponding to the pure-vortex part is given by
\[
  \begin{pmatrix}
    \phi_{\mathrm{vortex}}^x \\
    \phi_{\mathrm{vortex}}^y 
  \end{pmatrix} = - \sum_{i=1}^N \mathbf{x}_i \times \mathbf{e}_3
\quad \text{and} \quad 
  \phi_{\mathrm{vortex}}^\Omega = \sum_{i=1}^N \frac{\Gamma_i}{2}(x_i^2 + y_i^2).
\]
These expressions coincide up to sign with those in \cite{AdRa1988},
which is a consequence of the fact that our symplectic structure is
the negative of theirs.

Finally, for the stream function term $\mathbf{d} \Theta$ in $\beta _\mu$,
observe that $\Theta$ is an $\operatorname{SE}(2) $-invariant one-form on $\operatorname{SE}(2) \times \mathbb{R}^{2N}$.  The $\Bg$-potential associated to $\mathbf{d} \Theta$ is hence
given by (see the remark following Theorem~7.1.1 in
\cite{MarsdenHamRed}) 
\[
  \left<\phi_{\mathrm{stream}}, \xi\right> = \mathbf{i}_{\xi_{\operatorname{SE}(2) \times \mathbb{R}^{2N}}} \Theta
  \quad \text{for all } \xi \in \mathfrak{se}(2).
\]
Explicitly,
\[
\begin{pmatrix}
  \phi_{\mathrm{stream}}^x \\
  \phi_{\mathrm{stream}}^y 
\end{pmatrix} = 
\begin{pmatrix}
  \sum_{i = 1}^N \Gamma_i \Psi_x(g, \mathbf{x}_i) \\
  \sum_{i = 1}^N \Gamma_i \Psi_y(g, \mathbf{x}_i)
\end{pmatrix} \quad \text{and} \quad 
\phi_{\mathrm{stream}}^\Omega = \sum_{i = 1}^N \Gamma_i \Psi_\omega(g,
\mathbf{x}_i), 
\]
where the elementary stream functions are given by (\ref{stream}).

The full momentum map $J$ is then the sum of these three
contributions: 
\[
  J = J_{\mathrm{can}} - \phi_{\mathrm{vortex}} - \phi_{\mathrm{stream}}, 
\]
and this is precisely (\ref{fullmommmap}).
\end{proof}

The expression for the momentum map can be rewritten by introducing
the momentum map of the solid-fluid system in \textbf{\emph{body coordinates}}
(\cite{Sh2005, KaOs08}):
\begin{align*}
\begin{pmatrix}
J_X \\ J_Y
\end{pmatrix}%(g, \mathbf{\Pi})  
& =  
\begin{pmatrix}
  \Pi_x \\ \Pi_y 
\end{pmatrix}
+ \sum_{i=1}^N \Gamma_i \mathbf{X}_i \times \mathbf{e}_3 - \sum_{i=1}^N
\Gamma_i 
\begin{pmatrix}
  \Psi_X(\mathbf{X}_i) \\ \Psi_Y(\mathbf{X}_i)
\end{pmatrix}
\\
J_\Omega & = \Pi_\Omega - \sum_{i=1}^N \frac{\Gamma_i}{2}
\left\Vert\mathbf{X}_i\right\Vert^2 - \sum_{i=1}^N\Gamma_i \Psi_\Omega(\mathbf{X}_i),
\end{align*}
where $\Psi_X, \Psi_Y, \Psi_\Omega$ are the expression for the
elementary stream functions in body coordinates.  By using the
relation (\ref{transition}) between inertial and body coordinates, one
can show that the spatial and body momentum are related by 
\begin{equation} \label{relation}
\begin{pmatrix}
J_x \\ J_y
\end{pmatrix}
= R
\begin{pmatrix}
J_X \\ J_Y
\end{pmatrix}
+ \Gamma \mathbf{x}_0 \times \mathbf{e}_3 
\quad \text{and} \quad  
J_\omega = J_\Omega + (\mathbf{x}_0 \times \mathbf{J}_{\mathbf{x}})\cdot \mathbf{e}_3,
\end{equation}
where $\mathbf{J}_{\mathbf{x}} = ( J_x, J_y)^T$.  Note that the spatial
momentum is a function on $\operatorname{SE}(2)  \times \mathbb{R}^{2N}$, whereas the body
momentum is a function on $\mathbb{R}^{2N}$, and that the
$\operatorname{SE}(2) $-dependence of $J$ is determined by the relation above.  In
particular, if we evaluate $J$ at the identity, putting $R =
\mathbf{1}$ and $\mathbf{x}_0 = 0$ in (\ref{relation}), then we obtain
just the body momentum:
\[
  J(e, \mathbf{x}) = J(\mathbf{X}).
\]
\paragraph{Non-equivariance of the Momentum Map.}
Using the definition (\ref{twococycle}), the non-equivariance two-cocycle of the
momentum map is a map $\Sigma: \mathfrak{se}(2) \times \mathfrak{se}(2)
\rightarrow C^\infty(\operatorname{SE}(2)  \times \mathbb{R}^{2N})$.  To compute $\Sigma$,
we recall the basis $\{\mathbf{e} _\Omega, \mathbf{e} _x, \mathbf{e}_y\}$ of 
$\mathfrak{se}(2)$ given in (\ref{sebasis})  and let $\{\mathbf{e}^{\ast} _\Omega, \mathbf{e}^{\ast} _x, \mathbf{e}^{\ast} _y\}$ be
the corresponding dual basis of $\mathfrak{se}(2)^{\ast} $.

\begin{theorem}
  The non-equivariance two-cocycle $\Sigma$ of the momentum map $J$ is
  given by 
  \[
    \Sigma = - \Gamma \mathbf{e}^{\ast} _x \wedge \mathbf{e}^{\ast} _y,
  \]
  where $\Gamma = \sum_{i=1}^N \Gamma_i$ is the total vortex strength.
\end{theorem}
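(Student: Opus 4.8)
The plan is to evaluate the cocycle formula \eqref{twococycle},
\[
\Sigma(\xi,\eta) = -\left<\phi, [\xi,\eta]\right> + \beta_\mu\big(\xi_{\operatorname{SE}(2)\times\mathbb{R}^{2N}}, \eta_{\operatorname{SE}(2)\times\mathbb{R}^{2N}}\big),
\]
on the three pairs drawn from the basis $\{\mathbf{e}_\Omega,\mathbf{e}_x,\mathbf{e}_y\}$ of \eqref{sebasis}; antisymmetry of $\Sigma$ then determines it completely. Since both $\beta_\mu$ and the $\Bg$-potential $\phi$ split into a pure-vortex piece (the term proportional to $d\textbf{x}$ in \eqref{betaexpl}, with potential $\phi_{\mathrm{vortex}}$) and a stream-function piece (the term $\mathbf{d}\Theta$, with potential $\phi_{\mathrm{stream}}$), and since \eqref{twococycle} is linear in the pair $(\phi,\beta_\mu)$, the cocycle splits as $\Sigma = \Sigma_{\mathrm{vortex}} + \Sigma_{\mathrm{stream}}$. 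I would record at the outset the bracket relations $[\mathbf{e}_\Omega,\mathbf{e}_x]=\mathbf{e}_y$, $[\mathbf{e}_\Omega,\mathbf{e}_y]=-\mathbf{e}_x$, $[\mathbf{e}_x,\mathbf{e}_y]=0$, together with the fundamental vector fields of the diagonal action on the $\mathbb{R}^2$-factor, namely $(\mathbf{e}_x)_{\mathbb{R}^2}=\mathbf{e}_1$, $(\mathbf{e}_y)_{\mathbb{R}^2}=\mathbf{e}_2$ and $(\mathbf{e}_\Omega)_{\mathbb{R}^2}(\mathbf{x})=\mathbf{e}_3\times\mathbf{x}$.

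Next I would show $\Sigma_{\mathrm{stream}}=0$. The point is that $\beta_{\mathrm{stream}} = \mathbf{d}\big(\sum_i\Gamma_i\Theta\big)$ is exact with an $\operatorname{SE}(2)$-invariant primitive, the invariance of $\Theta$ being a consequence of Proposition~\ref{prop:invariance} (as already used to prove that $\Omega_\mathcal{B}$ is $\operatorname{SE}(2)$-invariant). The $\Bg$-potential of an exact, invariant magnetic term is the genuinely equivariant momentum map $\left<\phi_{\mathrm{stream}},\xi\right> = \mathbf{i}_{\xi_{\operatorname{SE}(2)\times\mathbb{R}^{2N}}}\Theta$: a one-line check using $\xi_{\operatorname{SE}(2)\times\mathbb{R}^{2N}}(h\cdot z)=T\Phi_h\big((\mathrm{Ad}_{h^{-1}}\xi)_{\operatorname{SE}(2)\times\mathbb{R}^{2N}}(z)\big)$ and $\Phi_h^{\ast}\Theta=\Theta$ yields equivariance, whence its cocycle vanishes identically. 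As corroboration on the pair $(\mathbf{e}_x,\mathbf{e}_y)$, the bracket term drops and $\mathbf{d}\Theta$ evaluated on the two generators reduces to $\partial\Psi'_y/\partial x - \partial\Psi'_x/\partial y$, which vanishes by the symmetry of the elementary stream functions \eqref{stream}, exactly the cancellation exploited in the proof of Theorem~\ref{thm:magform}.

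It then remains to compute $\Sigma_{\mathrm{vortex}}$, which reproduces the classical non-equivariance cocycle of the point-vortex momentum map. Using $\phi_{\mathrm{vortex}}$ from the momentum-map proposition, with $(\phi^x_{\mathrm{vortex}},\phi^y_{\mathrm{vortex}}) = -\sum_i\Gamma_i\,\mathbf{x}_i\times\mathbf{e}_3$, together with $\beta_{\mathrm{vortex}} = -\sum_i\Gamma_i\,d\textbf{x}$, I would evaluate \eqref{twococycle} on each pair. For $(\mathbf{e}_x,\mathbf{e}_y)$ the bracket is zero, so only the magnetic term survives and gives $-\sum_i\Gamma_i\,d\textbf{x}(\mathbf{e}_1,\mathbf{e}_2) = -\Gamma$. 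For $(\mathbf{e}_\Omega,\mathbf{e}_x)$ the bracket term $-\left<\phi_{\mathrm{vortex}},\mathbf{e}_y\right> = -\sum_i\Gamma_i x_i$ is cancelled exactly by $-\sum_i\Gamma_i\,d\textbf{x}(\mathbf{e}_3\times\mathbf{x}_i,\mathbf{e}_1) = \sum_i\Gamma_i x_i$, and an identical cancellation occurs for $(\mathbf{e}_\Omega,\mathbf{e}_y)$. Hence $\Sigma_{\mathrm{vortex}}$ vanishes on the two mixed pairs and equals $-\Gamma$ on $(\mathbf{e}_x,\mathbf{e}_y)$, giving $\Sigma = \Sigma_{\mathrm{vortex}} = -\Gamma\,\mathbf{e}^{\ast}_x\wedge\mathbf{e}^{\ast}_y$.

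The main obstacle is not any single computation but verifying that $\Sigma$ is genuinely \emph{constant} on $\operatorname{SE}(2)\times\mathbb{R}^{2N}$, since a priori each $\Sigma(\xi,\eta)$ is only a function. This constancy is precisely what the cancellations above secure: the position-dependent contributions of the magnetic term and of the $\Bg$-potential offset one another on the mixed pairs, while the stream piece drops out entirely by equivariance. As a consistency check, $\mathbf{e}^{\ast}_\Omega\wedge\mathbf{e}^{\ast}_x$ and $\mathbf{e}^{\ast}_\Omega\wedge\mathbf{e}^{\ast}_y$ are coboundaries on $\mathfrak{se}(2)$ while $\mathbf{e}^{\ast}_x\wedge\mathbf{e}^{\ast}_y$ generates $H^2(\mathfrak{se}(2);\mathbb{R})$, so the surviving cohomology class must be a multiple of $\mathbf{e}^{\ast}_x\wedge\mathbf{e}^{\ast}_y$; the evaluation on $(\mathbf{e}_x,\mathbf{e}_y)$ fixes the coefficient at $-\Gamma$.
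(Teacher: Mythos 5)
Your proposal is correct, and it reaches the paper's conclusion by a genuinely different organization of the computation. The paper works with the \emph{full} data $(\phi,\beta_\mu)$ at once: it evaluates $\beta_\mu$ on the basis pairs, obtaining position-dependent expressions on the mixed pairs $(\tilde{\mathbf{e}}_x,\tilde{\mathbf{e}}_\Omega)$ and $(\tilde{\mathbf{e}}_y,\tilde{\mathbf{e}}_\Omega)$ that contain both the pure-vortex terms and the stream-function terms $\Gamma_i x_i/\left\Vert\mathbf{x}_i\right\Vert^2$, and then observes -- in the paper's own words, ``remarkably'' -- that these are cancelled wholesale by the corresponding components $\phi_y$, $\phi_x$ of the $\Bg$-potential in the bracket term of \eqref{twococycle}. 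You instead split $\Sigma=\Sigma_{\mathrm{vortex}}+\Sigma_{\mathrm{stream}}$ by linearity and kill $\Sigma_{\mathrm{stream}}$ with a structural lemma: since the stream piece of the magnetic form is $\mathbf{d}$ of the $\operatorname{SE}(2)$-invariant primitive $\Theta$ (invariance being exactly what the paper proves when showing $\Omega_\mathcal{B}$ is $\operatorname{SE}(2)$-invariant), its potential $\mathbf{i}_{\xi}\Theta$ is a genuinely equivariant momentum map and contributes no cocycle; what remains, $\Sigma_{\mathrm{vortex}}$, is the classical Adams--Ratiu computation for point vortices in the unbounded plane. This buys two things the paper's proof does not make visible: it \emph{explains} the remarkable cancellation (it is forced by invariance of $\Theta$, rather than by the specific circular-cylinder stream functions \eqref{stream}), and it makes transparent the paper's closing observation that $\Sigma$ coincides with the cocycle of the $N$-vortex problem in an unbounded fluid. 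The paper's route, conversely, is self-contained and needs no equivariance lemma. Two small points of care in your write-up: the sign relating the $\Bg$-potential of an exact invariant form to $\mathbf{i}_\xi\Theta$ (via Cartan's formula, $\mathbf{i}_{\xi_M}\mathbf{d}\Theta=-\mathbf{d}\,\mathbf{i}_{\xi_M}\Theta$ when $\mathcal{L}_{\xi_M}\Theta=0$) depends on the convention in \eqref{bgpot}, but your equivariance argument is insensitive to this overall sign, so the conclusion $\Sigma_{\mathrm{stream}}=0$ stands; and your remark on constancy is well taken, since the paper's $\Sigma$ is a priori $C^\infty(\operatorname{SE}(2)\times\mathbb{R}^{2N})$-valued and the theorem implicitly asserts it is constant -- a point the paper leaves to the reader.
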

\begin{proof}
  In order to use (\ref{twococycle}) to compute $\Sigma$, we need the
  infinitesimal generators corresponding to the basis elements
  $\mathbf{e}_x$, $\mathbf{e}_y$ and $\mathbf{e}_\Omega$.  The infinitesimal
  generator of any element $\xi \in \mathfrak{se}(2)$, evaluated at $(e;
  \mathbf{x}_1, \ldots, \mathbf{x}_N)$, will be denoted by $\tilde{\xi}$ and 
is given by
  \[
    \tilde{\xi} := \xi_{\operatorname{SE}(2)  \times \mathbb{R}^{2N}}(e; \mathbf{x}_1, \ldots, \mathbf{x}_N) =
    (\xi; \xi_{\mathbb{R}^{2N}}(\mathbf{x}_1), \ldots, \xi_{\mathbb{R}^{2N}}(\mathbf{x}_N)),
  \]
  where $\xi_{\mathbb{R}^{2N}}$ is the infinitesimal generator of $\xi$
  for the fundamental action of $\operatorname{SE}(2) $ on $\mathbb{R}^2$.  Explicitly,
  \[
    (\mathbf{e}_\Omega)_{\mathbb{R}^2} = -y \frac{\partial}{\partial x} +
    x\frac{\partial}{\partial y}, \quad
    (\mathbf{e}_x)_{\mathbb{R}^2} = \frac{\partial}{\partial x}, \quad \text{and} \quad  (\mathbf{e}_y)_{\mathbb{R}^2} =
    \frac{\partial}{\partial y}. 
  \]

  Evaluating $\beta_\mu$ on these vectors, we obtain
  \begin{align*}
    \beta_\mu(\tilde{\mathbf{e}}_x, \tilde{\mathbf{e}}_y) & = -\Gamma, \\
    \beta_\mu(\tilde{\mathbf{e}}_x, \tilde{\mathbf{e}}_\Omega) & = \sum_{i=1}^N \Gamma_i
    \left(-\mathbf{X}_i + \frac{1}{\left\Vert\mathbf{X}_i\right\Vert^2} \right), \\
    \beta_\mu(\tilde{\mathbf{e}}_y, \tilde{\mathbf{e}}_\Omega) & = \sum_{i=1}^N \Gamma_i
    \left(-\mathbf{Y}_i + \frac{1}{\left\Vert\mathbf{Y}_i\right\Vert^2} \right).
  \end{align*}

  Remarkably, when calculating $\Sigma$, the last two expressions are
  cancelled entirely by opposite contributions from the remaining terms:
  \begin{align*}
    \Sigma(\mathbf{e}_x, \mathbf{e}_\Omega) & = -\left<\phi, [\mathbf{e}_x,
      \mathbf{e}_\Omega]\right> + \beta_\mu(\tilde{\mathbf{e}}_x,
    \tilde{\mathbf{e}}_\Omega) \\
    & = \phi_y + \beta_\mu(\tilde{\mathbf{e}}_x,
    \tilde{\mathbf{e}}_\Omega) \\
    & = 0,
  \end{align*}
  and similar for $\Sigma(\mathbf{e}_y, \mathbf{e}_\Omega)$.  The only
  non-zero term is given by $\Sigma(\mathbf{e}_x, \mathbf{e}_y) = - \Gamma$.
\end{proof}

In its current form, $\Sigma$ has the same form as the
non-equivariance two-cocycle for the $N$-vortex problem in an
unbounded fluid (see \cite{AdRa1988}).

\subsection{Poisson Structures}

Now we come to the conclusion of this paper.   Using the theory developed in the previous sections, we derive an explicit form for the reduced Poisson bracket on $\mathfrak{se}(2)^{\ast}  \times \mathbb{R}^{2N}$ associated to the magnetic symplectic form (\ref{magsympexpl}): this will turn out to be the BMR bracket.  In the terminology of section~\ref{sec:gen}, this is the bracket 
$\{\cdot, \cdot\}_{\mathrm{int}}$ with interaction terms.  Secondly, we then use the shift map $\hat{\mathcal{S}}$ 
(see (\ref{hatshift})) associated to the momentum map (\ref{fullmommmap}) to obtain the Poisson structure 
$\{\cdot, \cdot\}_\Sigma$ where the interaction terms are absent, at the expense of a non-equivariance cocycle.  When made more explicit, the latter bracket turns out to be the SMBK bracket.

\paragraph{The BMR Poisson Structure.}  The bracket obtained by Poisson reduction of the magnetic symplectic structure on $T^\ast \operatorname{SE}(2) \times \mathbb{R}^{2N}$ was described in theorem~\ref{thm:bmr}.  
The explicit computation of this Poisson
bracket boils down to substituting the explicit
expresssion (\ref{fullmommmap}) for the momentum map into the Poisson
bracket $\{\cdot, \cdot\}$ in theorem~\ref{thm:bmr}.  After a long,
but straightforward calculation, one obtains the BMR bracket
(\ref{bmrbracket}).  As an illustration, we compute one bracket
element, leaving the others to the reader.

We consider the functions $F = \Pi_x$ and $G = \Pi_y$ on $\mathfrak{se}(2)^{\ast} 
\times \mathbb{R}^{2N}$ and compute $\{\Pi_x, \Pi_y\}_{\mathrm{int}}$.  Note that,
considered as elements of $\mathfrak{se}(2)$, 
\[
\frac{\delta F}{\delta \mu} = \mathbf{e}_x \quad \text{and} \quad  \frac{\delta G}{\delta
  \mu} = \mathbf{e}_y.
\]

Computing (\ref{bmrbracket}) term by term, we have first of all that
\begin{equation*}
  \Pi_x \star \Pi_y   = \{\phi_x, \phi_y\}_{\mathrm{vortex}} \nonumber \\
      = - \sum_{i = 1}^N \Gamma_i \frac{\left\Vert\mathbf{X}_i\right\Vert^4 -
      R^4}{\left\Vert\mathbf{X}_i\right\Vert^4}, 
\end{equation*}
where $\{\cdot, \cdot\}_{\mathrm{vortex}}$ is the vortex bracket defined in (\ref{vortexbracket}).  Secondly, 
\[
  \beta_\mu(\tilde{\mathbf{e}}_x, \tilde{\mathbf{e}}_y) = -\Gamma,
\]
while the other terms are zero. Finally, it follows that 
\[
  \{\Pi_x, \Pi_y\}_{\mathrm{int}} = \Gamma - \sum_{i = 1}^N \Gamma_i \frac{\left\Vert\mathbf{X}_i\right\Vert^4 -
      R^4}{\left\Vert\mathbf{X}_i\right\Vert^4},
\]
which is precisely the first bracket element of (\ref{bmrbracket}).
The computation of the other elements is similar.

\paragraph{The SMBK Poisson Structure.}

By subjecting the BMR Poisson structure
to the shift map $\hat{\mathcal{S} }$, we can eliminate the interaction terms
from the Poisson structure, at the expense of introducing a non-equivariance cocycle.
As shown in
Theorem~\ref{thm:poissonsimple}, the result is a Poisson structure
consisting of the sum of the Lie-Poisson and the vortex Poisson
structure on the individual factors, together with a cocycle term:
\[
\{F, G\}_\Sigma = \{F_{|\mathfrak{se}(2)^{\ast} },
G_{|\mathfrak{se}(2)^{\ast} }\}_{\mathrm{l.p}} + \{F_{|\mathbb{R}^{2N}},
G_{|\mathbb{R}^{2N}}\}_{\mathrm{vortex}} - 
\Sigma \left( 
\frac{\delta F}{\delta \mu}, \frac{\delta F}{\delta \mu} \right).
\]
The last term is explicitly given by 
\[
\Sigma \left( 
\frac{\delta F}{\delta \mu}, \frac{\delta F}{\delta \mu} \right) = - \Gamma \left( \frac{\partial
    F}{\partial \Pi_x}\frac{\partial
    G}{\partial \Pi_y} - \frac{\partial
    G}{\partial \Pi_x}\frac{\partial
    F}{\partial \Pi_y} \right).
\]

In the case where the total vorticity $\Gamma$ is zero, 
this term vanishes and 
the bracket
reduces to the SMBK bracket (\ref{thm:poissonsimple}).  

%
%The bracket
%for the general case, where $\Gamma \ne 0$, was derived by
%\cite{Sh2005} by inspecting the structure of the equations of motion.

\paragraph{The Symplectic Leaves.}  In the case where the two-cocycle
$\Sigma$ vanishes, or equivalently, when the total circulation
$\Gamma$ is zero, a convenient expression for the symplectic leaves in 
$\mathfrak{se}(2)^{\ast}  \times \mathbb{R}^{2N}$ can be read off from
proposition~\ref{prop:sympleaf}.  

Recall that the symplectic leaves for the Lie-Poisson structure in
$\mathfrak{se}(2)^{\ast}  \cong \mathbb{R}^3$ come in two varieties.  One class
consists of cylinders whose axis is the $\Omega$-axis:
\[
  \mathcal{O} = \{ (\Pi_x, \Pi_y, \Omega) : \Pi_x^2 + \Pi_y^2 =
  \text{constant} \}, 
\]
while the other class consists of the individual points $(0, 0,
\Omega)$ of the $\Omega$-axis.  The symplectic leaves in
$\mathfrak{se}(2)^{\ast}  \times \mathbb{R}^{2N}$ are then the product of the
symplectic leaves of the Lie-Poisson structure with $\mathbb{R}^{2N}$.

\section{Conclusions and Future Work}

In this paper, we have 
used reduction theory to give a
systematic derivation of the equations of motion for a circular rigid body in a perfect fluid with point vortices.  Among other things, we have derived the Poisson structures that govern this problem, and related them to the Poisson structures in the literature.  However, the usefulness of our geometric method is not limited to merely reproducing the correct Poisson structure for one specific problem:  by uncovering fundamental geometric structures such as the Neumann connection, we have shed new light on the precise nature of the solid-fluid interaction.   Moreover, even though a number of simplifying assumptions were made in the beginning, 
it is expected that the present method can be extended without too much trouble to cover more general cases.  
Below, we have listed a number of open questions for which our method seems especially suited.

\paragraph{Non-zero Circulation.}  As we have seen in this paper, if
the total strength of the point vortices is non-zero, a cocycle
term appears in the equations of motion.  In a previous paper (see \cite{VaKaMa08}), we
studied the case of a rigid body moving in a potential flow with
circulation and found that the circulation manifests itself through
the curvature of the Neumann connection.  

From a physical point of view, both phenomena are very similar.
Indeed, having a non-zero circulation around the rigid body
amounts 
to placing a vortex at the center of the rigid body.
Hence, mathematically speaking, there appears to be a duality between
the description in terms of cocycles and in terms of curvatures.
Having a better mathematical understanding of this duality would prove
to be useful for other physical theories as well, for example the
dynamics of spin glasses in \cite{CeMaRa2004}.  In addition, it would be interesting to investigate the link between this theory with the work of \cite{GaRa2008}.

\paragraph{Arbitrary Body Shapes.}  The pioneering work of SMBK was
extended to cover the case of rigid bodies with arbitrary shapes by
\cite{Sh2005}, and studied further by \cite{KaOs08} with a view
towards stability and bio-locomotion.

By using methods of complex analysis, the methods of the present paper can
easily be extended to cover the case of rigid bodies of arbitrary
shape as well: any closed curve enclosing a simply-connected area of
the plane can be mapped to the unit circle by a suitable conformal
transformation and by pulling back the geometric objects in this paper
along that map, arbitrary body shapes can be treated.

Moreover, the objects defined in this paper transform naturally under
conformal transformations.  The magnetic form $\beta_\mu$, for
instance, will be mapped into a form $\beta_\mu'$ which has the same
appearance as before, but with now $\Psi_A$ the elementary stream
functions for a body of that shape.  As known from the classical fluid
dynamics literature, these stream functions also transform naturally
under conformal mappings.

\paragraph{Three-dimensional Bodies.}  

In contrast to calculations using only standard, ad-hoc methods, our
setup naturally generalizes to the case of three-dimensional flows
interacting with rigid bodies.  
\cite{ShShKeMa08} consider a rigid body interacting with
\textbf{\emph{vortex rings}}.  The space of point vortices $\mathbb{R}^{2N}$ then
is replaced by the space $\mathcal{M}$ of $N$ vortex rings, and much of the
analysis in the 2D case carries through, at the expense of significant
computational work.

From a geometric point of view, however, $\mathcal{M}$ plays a similar role
as $\mathbb{R}^{2N}$: both are coadjoint orbits of the group $\mathrm{Diff}_{\mathrm{vol}}$ of
volume-preserving diffeomorphisms.  Hence, $\mathcal{M}$ is equipped with a
natural symplectic structure (see \cite{ArnoldKhesin}), which is of
importance for the calculation of $\beta_\mu$ in this case.  Also, the definition of the 
Neumann connection remains valid in three dimensions. 
 We therefore expect that the geometric approach 
can be extended without too much difficulty to this case, and 
will lead to a
conceptually much clearer picture.

\paragraph{Routhian Reduction by Stages and Dirac Reduction.}

It has been known for some time that the Lagrangian analogue of
symplectic reduction is \textbf{\emph{Routhian reduction}} (\cite{Routh00}).  By
starting from the Lagrangian of an incompressible fluid together with
the rigid body Lagrangian, one could perform reduction on the
Lagrangian side using this theory.  The advantage would be
that Routhian reduction preserves the variational nature of the
system.
However, as the reduction procedure for the fluid-solid system
consists of two succesive reductions, one would need a theory of
\textbf{\emph{Routhian reduction by stages}}.  Developing such a theory would
be of considerable interest.

A related approach consists of using Dirac structures.   Through the Hamilton-Pontryagin principle, Dirac structures offer a way of incorporating both the Lagrangian and Hamiltonian formalisms making them ideally suited for systems with degenerate Lagrangians; see \cite{MarsdenDirac}.
Since the point vortex Lagrangian is degenerate, Dirac structures are likely to be useful here.  

\appendix
\section{Further Properties of the Neumann Connection}

Even though the Neumann connection is used in many problems from
geometric fluid dynamics or differential geometry, a systematic
presentation of its properties seems to be lacking.  In this appendix
we prove some basic theorems related to the Neumann connection.

\subsection{Fiber Bundles and Connections.}

Recall that if $\pi: Q \rightarrow Q/G$ is a principal fibre bundle
with structure group $G$, then a connection one-form on $Q$ is a
$\mathfrak{g}$-valued one-form $\mathcal{A}$ on $Q$ satisfying the following two
properties:
\begin{enumerate}
  \item $\mathcal{A}$ is \textbf{\emph{$G$-equivariant}}: $\sigma^{\ast} _g \mathcal{A} = \mathrm{Ad}_{g^{-1}}
    \circ \mathcal{A}$, where $\sigma_g : Q \rightarrow Q$ denotes the
    $G$-action, and $\mathrm{Ad}$ is the adjoint action on $\mathfrak{g}$.
  \item Let $\xi_Q$ be the \textbf{\emph{infinitesimal generator}} associated
    to an element $\xi \in \mathfrak{g}$, \emph{i.e.}  
    \[
       \xi_Q(q) = \frac{d}{dt} \sigma(\exp t\xi, q) \Big|_{t = 0}, \quad
       \text{where $\sigma(g, q) = \sigma_g(q)$}.
    \]
    Then $\mathcal{A}(\xi_Q) = \xi$.
\end{enumerate}

At each point $q \in Q$, the connection $\mathcal{A}$ induces complementary
projection operators $P_V, P_H: TQ \rightarrow TQ$, given by 
\[
  P_V(v_q) = [\mathcal{A}(v_q)]_Q(q), \quad \text{and} \quad  P_H = 1 - P_V,
\]
and referred to as the \textbf{\emph{vertical}} and \textbf{\emph{horizontal}}
projection operators, respectively.  
More information on principal fiber bundles and connections can be
found in \cite{KN1}.

\paragraph{The Curvature.}

The \textbf{\emph{curvature}} of a principal fiber bundle connection $\mathcal{A}$ is
the $\mathfrak{g}$-valued two-form $\mathcal{B}$ defined as follows.  Let $u_q,
v_q \in T_q Q$ and consider vector fields $X_u$ and $X_v$ extending
$u_q$ and $v_q$, \emph{i.e.} such that $X_u(q) = u_q$ and $X_v(q) =
v_q$.  Denote the horizontal part of $X_u$ as $X_u^H := P_H \circ
X_u$, and similar for $X_v^H$.  Then, $\mathcal{B}$ can be conveniently
expressed as 
\begin{equation} \label{curvdef}
   \mathcal{B}_q(u_q, v_q) = - \mathcal{A}_q([X^H_u, Y^H_v]).
\end{equation}
With this definition, $\mathcal{B}$ is $G$-equivariant and vanishes on
vertical vector fields:
\begin{equation} \label{props}
  \sigma^{\ast} _g \mathcal{B} = \mathrm{Ad}_{g^{-1}} \mathcal{B} \quad \text{and} \quad  i_{\xi_Q} \mathcal{B} = 0, 
\end{equation}
for all $g \in G$ and $\xi \in \mathfrak{g}$.

\subsection{The Neumann Connection}

\paragraph{The Connection One-form.}

We recall from section~\ref{sec:neumann} that the connection one-form for the Neumann connection is given by 
\[
  \mathcal{A}_{(\varphi, g)}(\dot{g}, \dot{\varphi}) = \varphi^{\ast} 
  \mathbf{u}_{\rm v},
\]
where $\mathbf{u}_{\rm v}$ is the divergence-free part in the
Helmholtz-Hodge decomposition (\ref{hhdecomp}) of the Eulerian
velocity $\mathbf{u}$; see  (\ref{connection}).

Before showing that this prescription
 indeed yields
a well-defined connection form, we note that each divergence-free
vector field $\mathbf{u} \in \mathfrak{X}_{\mathrm{vol}}$ defines a vertical vector field on
$Q$, denoted by $\mathbf{u}_Q$ and given by
\[
  \mathbf{u}_Q(g, \varphi) = (0, T \varphi \circ \mathbf{u}).
\]
The vector field $\mathbf{u}_Q$ is the infinitesimal generator
associated to $\mathbf{u}$ under the action of $\mathrm{Diff}_{\mathrm{vol}}$ on $Q$.

\begin{proposition} \label{prop:conn}
  The one-form $\mathcal{A}$ defined in (\ref{connection}) is a connection
  one-form.  In other words, 
  \begin{enumerate}
    \item $\mathcal{A}$ is $\mathrm{Diff}_{\mathrm{vol}}$-equivariant: for all $\phi \in
      \mathrm{Diff}_{\mathrm{vol}}$ and $(g, \dot{g}; \varphi, \dot{\varphi}) \in TQ$, 
      \[
          \mathcal{A}_{(g, \varphi \circ \phi)}(\dot{g}, \dot{\varphi} \circ \phi) = 
          \phi^{\ast}  \mathcal{A}_{(g, \varphi)}(\dot{g}, \dot{\varphi});
      \]
    \item If $\mathbf{u}$ is an element of $\mathfrak{X}_{\mathrm{vol}}$ with associated
      fundamental vector field $\mathbf{u}_Q$, then $\mathcal{A}(\mathbf{u}_Q) =
      \mathbf{u}$.
  \end{enumerate}
\end{proposition}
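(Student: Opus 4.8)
The plan is to verify the two defining properties of a connection one-form in turn, in both cases exploiting the fact that the Eulerian velocity field and its Helmholtz--Hodge decomposition (\ref{hhdecomp}) transform in a controlled way. The crucial structural input throughout is the \emph{uniqueness} of the splitting $\mathbf{u} = \nabla \Phi + \mathbf{u}_{\rm v}$ into a gradient part and a divergence-free part tangent to the boundary; I take this for granted, modulo the functional-analytic caveats already noted in the paper.

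For the equivariance property, the key observation is that the Eulerian velocity is \emph{invariant} under the particle relabelling action. Indeed, replacing $(\varphi, \dot{\varphi})$ by $(\varphi \circ \phi, \dot{\varphi} \circ \phi)$ leaves the Eulerian velocity unchanged, since
\[
  (\dot{\varphi} \circ \phi) \circ (\varphi \circ \phi)^{-1} = \dot{\varphi} \circ \varphi^{-1} = \mathbf{u}.
\]
Because $\phi$ maps $\mathcal{F}_0$ onto itself, the spatial fluid domain $\varphi(\mathcal{F}_0) = (\varphi \circ \phi)(\mathcal{F}_0)$ is also unaffected, so the Neumann problem (\ref{neumann}) defining $\Phi$, and hence the divergence-free part $\mathbf{u}_{\rm v}$, are the same for both configurations. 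The claimed identity then follows purely from the functoriality of pullback, $(\varphi \circ \phi)^{\ast} = \phi^{\ast} \circ \varphi^{\ast}$:
\[
  \mathcal{A}_{(g, \varphi \circ \phi)}(\dot{g}, \dot{\varphi} \circ \phi) = (\varphi \circ \phi)^{\ast} \mathbf{u}_{\rm v} = \phi^{\ast} (\varphi^{\ast} \mathbf{u}_{\rm v}) = \phi^{\ast} \mathcal{A}_{(g, \varphi)}(\dot{g}, \dot{\varphi}).
\]

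For the second property, I would first compute the Eulerian velocity associated to the fundamental vector field $\mathbf{u}_Q(g, \varphi) = (0, T\varphi \circ \mathbf{u})$. Setting $\dot{\varphi} = T\varphi \circ \mathbf{u}$, the corresponding Eulerian field is the pushforward
\[
  \dot{\varphi} \circ \varphi^{-1} = T\varphi \circ \mathbf{u} \circ \varphi^{-1} = \varphi_{\ast} \mathbf{u}.
\]
Since $\mathbf{u} \in \mathfrak{X}_{\mathrm{vol}}$ is divergence-free and tangent to $\partial \mathcal{F}_0$, and $\varphi$ is volume-preserving and carries $\partial \mathcal{F}_0$ to $\partial \mathcal{F}$, the pushforward $\varphi_{\ast} \mathbf{u}$ is again divergence-free and tangent to $\partial \mathcal{F}$. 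By uniqueness of the Helmholtz--Hodge decomposition this forces $\nabla \Phi = 0$ and $\mathbf{u}_{\rm v} = \varphi_{\ast} \mathbf{u}$, whence $\mathcal{A}(\mathbf{u}_Q) = \varphi^{\ast} \mathbf{u}_{\rm v} = \varphi^{\ast} \varphi_{\ast} \mathbf{u} = \mathbf{u}$.

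The main obstacle, such as it is, lies not in the algebra but in justifying that $\varphi_{\ast} \mathbf{u}$ is genuinely the divergence-free summand: this requires checking that pushing forward by a volume-preserving diffeomorphism preserves both the divergence-free condition (a short Lie-derivative computation using $\varphi^{\ast}(d \mathbf{x}) = \eta_0$) and tangency to the boundary, and then invoking uniqueness of the decomposition within the appropriate class of fields decaying at infinity. Once these analytic facts are secured, both statements collapse to the functoriality identities $(\varphi \circ \phi)^{\ast} = \phi^{\ast} \circ \varphi^{\ast}$ and $\varphi^{\ast} \varphi_{\ast} = \mathrm{id}$.
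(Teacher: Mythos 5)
Your proof is correct and follows essentially the same route as the paper's: for equivariance, the invariance of the Eulerian velocity under relabelling combined with functoriality of pullback, $(\varphi \circ \phi)^{\ast} = \phi^{\ast} \circ \varphi^{\ast}$; and for the second property, the observation that $\varphi_{\ast} \mathbf{u}$ is divergence-free and tangent to $\partial \mathcal{F}$, so that $\mathcal{A}(\mathbf{u}_Q) = \varphi^{\ast} \varphi_{\ast} \mathbf{u} = \mathbf{u}$. Your explicit appeal to uniqueness of the Helmholtz--Hodge decomposition only makes precise what the paper leaves implicit.
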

\begin{proof}
  To prove equivariance, note that the Eulerian velocity associated to
  $\dot{\varphi} \circ \phi$ is equal to that associated to
  $\dot{\varphi}$:
  \[
    \mathbf{u}' = (\dot{\varphi} \circ \phi) \circ (\varphi \circ
    \phi)^{-1} = \dot{\varphi} \circ \varphi^{-1} = \mathbf{u}.
  \]
  Hence, 
  \[
    \mathcal{A}_{(g, \varphi \circ \phi)}(\dot{g}, \dot{\varphi} \circ \phi)
    = (\varphi \circ \phi)^{\ast}  \mathbf{u}_{\rm v} 
    = \phi^{\ast}  (\varphi^{\ast}  \mathbf{u}_{\rm v})
    = \phi^{\ast}  \mathcal{A}_{(g, \varphi)}(\dot{g}, \dot{\varphi}).
  \]

  To prove the second property, note that the push-forward $X =
  \varphi_\ast \mathbf{u}$ is divergence-free and tangent to the boundary
  of $\mathcal{F}$ since $\mathbf{u}$ is divergence-free and tangent to
  $\partial \mathcal{F}_0$ and $\varphi$ is volume-preserving.

  Evaluating the connection one-form on $\mathbf{u}$ therefore gives 
  \[
    \mathcal{A}_{(g, \varphi)}(\mathbf{u}_Q(g, \varphi)) = \varphi^{\ast}  X = \mathbf{u},
  \]
  which concludes the proof that $\mathcal{A}$ is a well-defined connection
  one-form. 
\end{proof}

\paragraph{The Horizontal Lift.}

The horizontal lift operator associates to each $(g, \varphi)$ a
linear map $\mathbf{h}_{(g, \varphi)}: T_g \operatorname{SE}(2)  \rightarrow T_{(g, \varphi)}
Q$ with the following properties: 
\begin{enumerate}
  \item The composition $T_{(g, \varphi)} \pi \circ \mathbf{h}_{(g, \varphi)}$
    is the identity in $T_g \operatorname{SE}(2) $; 
  \item The image of $\mathbf{h}_{(g, \varphi)}$ consists of horizontal
    vectors: $\mathcal{A}_{(g, \varphi)} \circ \mathbf{h}_{(g, \varphi)} = 0$. 
\end{enumerate}

For the Neumann connection, the horizontal lift
is given by 
\[
   \mathbf{h}_{(g, \varphi)}(\dot{g}) = \left(\dot{g}, \nabla \Phi \circ \varphi\right),
\]
where $\Phi$ is the solution of the Neumann problem (\ref{neumann})
associated to $(g, \dot{g})$; \emph{i.e.} $(\omega, \mathbf{v}) =
\dot{g}g^{-1}$.    It is easy to show that $\mathbf{h}$ satisfies both properties of the horizontal lift, and hence that $\mathbf{h}$ is indeed the horizontal lift of the Neumann connection as we claimed in section~\ref{sec:neumann}.

\paragraph{The Neumann Connection as a Mechanical Connection.}

When given a group-invariant metric on the total space of a principal
fiber bundle, the \textbf{\emph{mechanical connection}} is defined by
declaring its horizontal subspaces to be orthogonal to the vertical
bundle.  The terminology stems from the fact that the underlying
metric is usually the kinetic-energy metric of a mechanical system.

Since there exists a natural metric (\ref{metric}) on $Q$, it should
therefore come as no surprise that the Neumann connection can also be
viewed as a mechanical connection.  

The horizontal subspace of the Neumann connection at an element
$(g, \varphi) \in Q$ is given by the kernel of $\mathcal{A}_{(g, \varphi)}$.
Explicitly, a tangent vector $(g, \dot{g}; \varphi, \dot{\varphi})$ is
in the kernel of $\mathcal{A}_{(g, \varphi)}$ if and only if $\dot{\varphi}
= \nabla \Phi \circ \varphi$, where $\Phi$ is the solution of the
Neumann problem (\ref{neumann}) associated to $(g, \dot{g})$.

On the other hand, the vertical subspace at $(g, \varphi)$ is
generated by elements of the form $\mathbf{u}_Q(g, \varphi)$.  Verifying
that the Neumann connection is indeed the mechanical connection now
boils down to checking that the vertical and horizontal subspaces are
orthogonal with respect to the metric (\ref{metric}).

Let $(g, \dot{g}; \varphi, \nabla \Phi \circ \varphi)$ and
$\mathbf{u}_Q(g, \varphi)$ be horizontal and vertical tangent vectors,
respectively.  Then we have
\begin{multline*}
  \left\langle \! \left\langle(g, \dot{g}; \varphi, \nabla \Phi \circ \varphi),
  \mathbf{u}_Q(g, \varphi)\right\rangle \!
  \right\rangle  = \left< \nabla \Phi \circ \varphi,
    T\varphi \circ \mathbf{u} \right>_{\operatorname{Emb}} \\
   = \int_{\mathcal{F}_0} (\nabla \Phi \circ \varphi) \cdot (T\varphi \circ
  \mathbf{u}) \, \eta_0 
   = \int_{\mathcal{F}} \nabla \Phi \cdot (\varphi_\ast \mathbf{u}) \, \eta 
   = 0,
\end{multline*}
where the last equality follows from the $L_2$-orthogonality between
gradient vector fields and divergence-free vector fields tangent to
the boundary of $\mathcal{F}$.

\section{Proofs of the Theorems in Section~\ref{sec:gen}} \label{appB}

In this technical appendix, we provide proofs for the theorems in section~\ref{sec:gen} regarding the reduced Poisson structures with or without interaction terms.   We do so by using the diffeomorphism of $T^\ast H \times P$ with 
$H \times \mathfrak{h} ^{\ast}  \times
P$ given by left translation.  
More specifically, 
we introduce a diffeomorphism $\Lambda : T^{\ast}  H \times P
\rightarrow H \times \mathfrak{h} ^{\ast}  \times P$ by
\[
\Lambda(g, \alpha_g; p) = (g, T^{\ast}  L_g (\alpha_g), g^{-1}
  \cdot p),
\]
and a diffeomorphism $\lambda : H \times P \rightarrow H \times P$ by
$\lambda(g, p) = (g, g^{-1} \cdot p)$ such that the following diagram
commutes: 
\[
\xymatrix{ 
  T^{\ast}  H \times P \ar[r]^\Lambda \ar[d]_{\pi_H} & H \times
  \mathfrak{h} ^{\ast}  \times P \ar[d]^{\mathrm{pr}} \\
  H \times P \ar[r]_\lambda & H \times P}
\]
Here, $\pi_H: T^{\ast}  H \times P \rightarrow H \times P$ is given by
$\pi_H(g, \alpha_g; p) = (g, p)$, while $\mathrm{pr}$ simply forgets the
second factor: $\mathrm{pr}(g, \mu, x) = (g, x)$.

The advantage of this construction is that under $\Lambda$
the action (\ref{diagaction}) becomes left translation on the first factor.
That is, if
$\Lambda(\alpha_g; p) = (g, \mu, x)$, then $\Lambda(h \cdot(\alpha_g;
p)) = (hg, \mu, x)$.  The fact that the diagonal action
(\ref{diagaction}) reduces in this way greatly simplifies Poisson reduction.

\subsection{Symplectic Forms in Body Coordinates.}

We start by deriving in lemma~\ref{lemma:sympbody} an explicit expression for the push-forward
$\Lambda_\ast \Omega$ of the symplectic form.  We then define a 
natural \textbf{\emph{shift map}} $\mathcal{S} $ from $H
\times \mathfrak{h} ^{\ast}  \times P$ to itself, given by
\[
  \mathcal{S} (g, \mu, x) = (g, \mu - \phi(e, x), x),  
\]
where $\phi: H \times P \rightarrow \mathfrak{h} ^{\ast} $ is the
$\Bg$-potential associated to the action of $H$ on $H \times P$,
\emph{i.e.} the solution of (\ref{bgpot}).  This map takes the
symplectic structure (\ref{bigsympstruct}) with interaction terms of lemma~\ref{lemma:sympbody} into
one where the interaction terms are absent.  The explicit form of the resulting symplectic form is derived in lemma~\ref{lemma:pf}.

For future reference, we will refer to the push-forwards $\Lambda_\ast \omega_{\mathrm{can}}$ 
and $\Lambda_\ast \Omega$
as
$\omega_{{\mathrm{body}}}$ and $\Omega_{{\mathrm{body}}}$, respectively:
\begin{equation} \label{cushman}
\omega_{{\mathrm{body}}} = \Lambda_\ast \omega_{\mathrm{can}} \quad \text{and} \quad  
\Omega_{{\mathrm{body}}} = \Lambda_\ast \Omega.
\end{equation}

\begin{lemma} \label{lemma:sympbody}
  The push-forward $\Lambda_\ast \Omega$ of the symplectic form
  $\Omega$ to $H \times \mathfrak{h} ^{\ast}  \times P$ is given by 
\begin{multline}
  (\Lambda_\ast \Omega) (g, \mu, x)((g\cdot \xi, \rho, v_x), (g\cdot
  \eta, \sigma, w_x)) = \\
  - \left<\rho, \eta\right> + \left<\sigma, \xi\right> + \left<\mu, [\xi, \eta]\right>
 \label{bigsympstruct} \\
  - B(e, x)(\xi_{H\times P}(e, x), \eta_{H\times P}(e, x)) 
   - B(e, x)((0, v_x), (0, w_x)) \\
  - \left<\mathbf{d} \phi_\xi(e, x), (0, w_x)\right> + \left<\mathbf{d} \phi_\eta(e, x),
   (0, v_x)\right>,  
\end{multline}
where $(g, \mu, x)$ is an element of $H \times \mathfrak{h} ^{\ast}  \times P$
and $(g\cdot \xi, \rho, v_x)$, $(g\cdot \eta, \sigma, w_x)$ are
elements of $T_{(g, \mu, x)}(H \times \mathfrak{h} ^{\ast}  \times P) \cong T_g
H \times \mathfrak{h} ^{\ast}  \times T_x P$.
\end{lemma}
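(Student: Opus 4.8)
The plan is to exploit the linearity of the push-forward by writing $\Omega = \omega_{\mathrm{can}} - \pi_H^\ast B$, where $\pi_H : T^\ast H \times P \to H \times P$ is the projection appearing in the commutative diagram, and to transport each summand separately under $\Lambda$. The first summand is routine: $\Lambda_\ast \omega_{\mathrm{can}} = \omega_{\mathrm{body}}$ is the canonical symplectic form of $T^\ast H$ written in the left-trivialization $T^\ast H \cong H \times \mathfrak{h}^\ast$, and its standard expression (see, e.g., \cite{MarsdenRatiu} or \cite{CuBa97}) evaluated at $(g,\mu)$ on $(g\cdot\xi, \rho)$ and $(g\cdot\eta,\sigma)$ is exactly the first three terms $-\langle\rho,\eta\rangle + \langle\sigma,\xi\rangle + \langle\mu,[\xi,\eta]\rangle$. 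Thus the genuine content of the lemma lies in the term coming from $B$.

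For that term I would use the commuting square $\mathrm{pr}\circ\Lambda = \lambda\circ\pi_H$. Since $\lambda$ is a diffeomorphism this gives $\pi_H = \lambda^{-1}\circ\mathrm{pr}\circ\Lambda$, hence $\pi_H^\ast B = \Lambda^\ast\mathrm{pr}^\ast(\lambda_\ast B)$ and therefore $\Lambda_\ast(\pi_H^\ast B) = \mathrm{pr}^\ast(\lambda_\ast B)$. The problem thus reduces to computing the push-forward of $B$ under $\lambda(g,p) = (g, g^{-1}\cdot p)$ and then pulling back along $\mathrm{pr}$, which simply drops the $\mathfrak{h}^\ast$-slot of the tangent vectors. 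In other words it remains to evaluate $(\lambda_\ast B)(g,x)$ on the vectors $(g\cdot\xi, v_x)$ and $(g\cdot\eta, w_x)$.

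To do this I would differentiate $\lambda^{-1}(g,p) = (g, g\cdot p)$. Writing the action as $\Phi_g(p)=g\cdot p$, a short computation gives $T\lambda^{-1}(g\cdot\xi, v_x) = (g\cdot\xi,\ T\Phi_g(v_x + \xi_P(x)))$, where $\xi_P$ is the infinitesimal generator on $P$. The key observation is that this is precisely $T\Psi_g$ applied to $(\xi, v_x + \xi_P(x)) \in T_{(e,x)}(H\times P)$, where $\Psi_h(g,p) = (hg, h\cdot p)$ is the action whose invariance makes $\Omega$ invariant, so that $\Psi_g^\ast B = B$. Invariance then collapses the evaluation onto the identity fibre: $(\lambda_\ast B)(g,x)((g\xi,v_x),(g\eta,w_x)) = B(e,x)\big((\xi, v_x+\xi_P(x)),(\eta, w_x+\eta_P(x))\big)$. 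Splitting $(\xi, v_x+\xi_P(x)) = \xi_{H\times P}(e,x) + (0,v_x)$ and expanding by bilinearity yields four terms: the generator--generator term $B(e,x)(\xi_{H\times P},\eta_{H\times P})$, the $P$--$P$ term $B(e,x)((0,v_x),(0,w_x))$, and two cross terms. The cross terms are handled by the defining relation $\mathbf{i}_{\xi_{H\times P}}B = \mathbf{d}\phi_\xi$ of the $\Bg$-potential, which turns $B(e,x)(\xi_{H\times P},(0,w_x))$ into $\langle\mathbf{d}\phi_\xi(e,x),(0,w_x)\rangle$ and, by the antisymmetry of $B$, the other into $-\langle\mathbf{d}\phi_\eta(e,x),(0,v_x)\rangle$. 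Subtracting $\mathrm{pr}^\ast(\lambda_\ast B)$ from $\omega_{\mathrm{body}}$ then reproduces the last four lines of \eqref{bigsympstruct} with the correct signs.

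The main obstacle I expect is the careful bookkeeping of the tangent map $T\lambda^{-1}$ and the identification of the resulting vector as a $\Psi_g$-image of a generator-plus-$P$ vector; separating the two contributions $T\Phi_g(v_x)$ and $T\Phi_g(\xi_P(x))$ correctly is where sign and placement errors are most likely to creep in. A secondary point worth making explicit is that $B$ is $\Psi$-invariant, which is forced by the standing hypothesis that $\Omega$ is invariant under the diagonal action together with the invariance of $\omega_{\mathrm{can}}$, since the entire reduction to the identity fibre rests on it. As a consistency check, one can note that the $P$--$P$ term, via $B_{|P} = -\omega_P$, is exactly what reproduces the canonical Poisson structure on $P$ later on, which is reassuring.
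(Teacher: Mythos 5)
Your proposal is correct and follows essentially the same route as the paper's proof: both decompose $\Lambda_\ast\Omega$ into Cushman's body-coordinate expression for $\Lambda_\ast\omega_{\mathrm{can}}$ minus $\mathrm{pr}^\ast(\lambda_\ast B)$, compute $T\lambda^{-1}$ to push $B$ forward, invoke the $H$-invariance of $B$ to collapse the evaluation to the identity fibre, expand bilinearly, and convert the two cross terms using $\mathbf{i}_{\xi_{H\times P}}B = \mathbf{d}\phi_\xi$. The only differences are presentational: you justify $\Lambda_\ast(\pi_H^\ast B)=\mathrm{pr}^\ast(\lambda_\ast B)$ explicitly from the commutative diagram and you flag the $H$-invariance of $B$ as a needed hypothesis, both of which the paper uses without comment.
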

\begin{proof}
Notice first that 
$\Lambda_\ast \Omega = \Lambda_\ast \omega_{\mathrm{can}} - \mathrm{pr}^{\ast} (
  \lambda_\ast B)$.
The first term is the expression for the canonical symplectic form in
body coordinates calculated by R. Cushman (quoted in \cite{AbMa78},
proposition~4.4.1) and is given by
\[
(\Lambda_\ast \omega_{\mathrm{can}}) (g, \mu, x)((g\cdot \xi, \rho, v_x), (g\cdot
  \eta, \sigma, w_x)) =  - \left<\rho, \eta\right> + \left<\sigma, \xi\right> +
  \left<\mu, [\xi, \eta]\right>. 
\]

For the second term, we have
\begin{align*}
(\lambda_\ast& B) (g, x)((g\cdot\xi, v_x), (g\cdot\eta, w_x))   \\
  & =B(\lambda^{-1}(g, x))(T\lambda^{-1}(g\cdot\xi, v_x),
  T\lambda^{-1}(g\cdot\eta, w_x)) \\
  & =B(g, g \cdot x)\big((g\cdot\xi, T\Psi_g(v_x) + T\hat{\Psi}_x(g \cdot
  \xi)), (g\cdot\eta, T\Psi_g(w_x) + T\hat{\Psi}_x(g \cdot
  \eta))\big),
\end{align*}
where $\Psi_g: P \rightarrow P$ denotes the action of $H$ on $P$:
$\Psi_g(x) = g\cdot x$, and $\hat{\Psi}_x: G \rightarrow P$ is the map
defined by $\hat{\Psi}_x(g) = g \cdot x$.  Note that $T\hat{\Psi}_x(g
\cdot \xi) = g \cdot \xi_P(p)$.  Taking this into account, as well as
the $H$-invariance of $B$, allows us to rewrite the above expression
as
\begin{align*}
  B(e, x)& \big((\xi, v_x + \xi_P(x)), (\eta, w_x + \eta_P(x)) \big)
  \\
  & = B(e, x)((0, v_x), (0, w_x)) + B(e, x)((\xi, \xi_P(x)), (\eta,
  \eta_P(x))) \\
  & + B(e, x)((0, v_x), (\eta, \eta_P(x))) + B(e, x)((\xi, \xi_P(x)),
  (0, w_x)).
\end{align*}
The third term can be rewritten as
\[ 
B(e, x)((0, v_x), (\eta, \eta_P(x))) = - \mathbf{d} \phi_\eta(e, x)(0, v_x), 
\]
and similar for the last term.  Putting everything together, we obtain
(\ref{bigsympstruct}).
\end{proof}

\begin{lemma} \label{lemma:pf}
  The push-forward $\mathcal{S} _\ast \Omega_{\mathrm{body}}$ on $H \times
  \mathfrak{h} ^{\ast}  \times P$ is again a symplectic form, given by 
  \begin{multline} \label{pfomega}
    (\mathcal{S} _\ast \Omega_{\mathrm{body}})(g, \mu, x)((g\cdot \xi, \rho, v_x),
    (g\cdot
    \eta, \sigma, w_x)) = \\
    \omega_{\mathrm{body}}(g, \mu)((g \cdot \xi, \rho), (g\cdot
    \eta, \sigma))
    - \omega_P(v_x, w_x) - \Sigma(\xi, \eta).
  \end{multline}
  Here, the arguments of $\mathcal{S} _\ast \Omega_{\mathrm{body}}$ have the same
  meaning as in lemma~\ref{lemma:sympbody}, and $\Sigma$ is the
  non-equivariance two-cocycle (\ref{twococycle}) of $\phi$.
\end{lemma}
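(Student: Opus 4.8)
The plan is to exploit the fact that $\mathcal{S}$ is a diffeomorphism, so that $\mathcal{S}_\ast \Omega_{\mathrm{body}} = (\mathcal{S}^{-1})^\ast \Omega_{\mathrm{body}}$ is automatically closed and non-degenerate. Hence the assertion that the push-forward is ``again a symplectic form'' requires no separate argument, and the entire content of the lemma is the explicit formula (\ref{pfomega}). I would therefore begin by recording $\mathcal{S}^{-1}(g,\mu,x) = (g, \mu + \phi(e,x), x)$ and computing its tangent map. Since $\phi(e,\cdot)$ is a function of $x$ alone, its differential has only a $P$-component, and so $T\mathcal{S}^{-1}$ fixes the $H$- and $P$-slots of a tangent vector while shifting the $\mathfrak{h}^\ast$-slot:
\[
T\mathcal{S}^{-1}(g\cdot\xi, \rho, v_x) = \bigl(g\cdot\xi,\; \rho + \mathbf{d}(\phi(e,\cdot))(x)\cdot v_x,\; v_x\bigr),
\]
with the analogous formula for the second vector, and the base point moving from $(g,\mu,x)$ to $(g,\mu+\phi(e,x),x)$.

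Next I would substitute these shifted vectors and the shifted base point directly into the expression (\ref{bigsympstruct}) of Lemma~\ref{lemma:sympbody}. This produces three groups of terms. The three Cushman terms $-\langle\rho,\eta\rangle + \langle\sigma,\xi\rangle + \langle\mu,[\xi,\eta]\rangle$ reassemble into $\omega_{\mathrm{body}}(g,\mu)$; the pure $P$-block $-B(e,x)((0,v_x),(0,w_x))$ is untouched by the shift and, via the relation $B_{|P} = -\omega_P$, supplies the $\omega_P$-contribution to (\ref{pfomega}); and the replacement $\mu \mapsto \mu + \phi(e,x)$ inside $\langle\mu,[\xi,\eta]\rangle$ generates an extra $\langle\phi(e,x),[\xi,\eta]\rangle$.

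The crucial step is the cancellation of the coupling terms. The shift in the $\mathfrak{h}^\ast$-component feeds $\langle\mathbf{d}\phi_\eta(e,x),(0,v_x)\rangle$ and $\langle\mathbf{d}\phi_\xi(e,x),(0,w_x)\rangle$ into the pairings $-\langle\rho,\eta\rangle$ and $\langle\sigma,\xi\rangle$, and these are exactly opposite to the two cross terms $-\langle\mathbf{d}\phi_\xi(e,x),(0,w_x)\rangle$ and $+\langle\mathbf{d}\phi_\eta(e,x),(0,v_x)\rangle$ already present in (\ref{bigsympstruct}); they cancel in pairs, which is precisely the mechanism by which the shift removes the interaction terms. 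What remains of the $\phi$- and $B$-dependence is then $\langle\phi(e,x),[\xi,\eta]\rangle - B(e,x)(\xi_{H\times P},\eta_{H\times P})$, which by the definition (\ref{twococycle}) of the two-cocycle is nothing but $-\Sigma(\xi,\eta)$. Collecting the three groups yields (\ref{pfomega}).

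I expect the main obstacle to be purely bookkeeping rather than conceptual: one must correctly identify $\langle\mathbf{d}\phi_\eta(e,x),(0,v_x)\rangle$ with $\langle\mathbf{d}(\phi(e,\cdot))(x)\cdot v_x,\,\eta\rangle$ (that is, that differentiating $\phi$ in the $P$-direction and pairing with $\eta$ commutes with pairing first and then differentiating), and one must keep the sign conventions for $B_{|P} = -\omega_P$ and for $\Sigma$ consistent throughout, so that the four cross terms genuinely cancel and the residual $B$- and $\phi$-terms collapse onto $-\Sigma$. Once these identifications are made, the computation is a direct, if slightly lengthy, substitution.
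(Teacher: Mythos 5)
Your proposal is correct and is essentially the paper's own proof: the paper likewise computes $\mathcal{S}_\ast \Omega_{\mathrm{body}}$ by evaluating the expression (\ref{bigsympstruct}) at the shifted base point $(g, \mu + \phi(e,x), x)$ with tangent vectors whose $\mathfrak{h}^{\ast}$-slots are shifted by $T\phi(0, v_x)$ and $T\phi(0, w_x)$, observes the pairwise cancellation of the four $\mathbf{d}\phi$-cross terms, and identifies $\left<\phi(e,x), [\xi,\eta]\right> - B(e,x)(\xi_{H \times P}, \eta_{H \times P})$ with $-\Sigma(\xi,\eta)$ via the definition (\ref{twococycle}). The only cosmetic differences are that the paper organizes the computation by first pushing forward the Cushman part $\omega_{\mathrm{body}}$ alone and then adding back the shift-invariant interaction terms, whereas you substitute into the full expression at once, and that the paper leaves the closedness and nondegeneracy of the push-forward implicit, which, as you note, is automatic since $\mathcal{S}$ is a diffeomorphism.
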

\begin{proof}
The pushforward of $\omega_{\mathrm{body}}$ under $\mathcal{S} $ is given by 
\begin{multline} \label{pushforwardcushman}
  (\mathcal{S} _\ast \omega_{\mathrm{body}})(g, \mu, x)((g \cdot \xi, \rho, v_x),
  (g\cdot \eta, \sigma,w_x)) = \\ \omega_{\mathrm{body}}(g, \mu', x) ((g \cdot
  \xi, \rho', v_x), (g\cdot
  \eta, \sigma' ,w_x)) = \\
  - \left<\rho', \eta\right> + \left<\sigma', \xi\right> + \left<\mu', [\xi,
    \eta]\right>,
\end{multline}
where 
\[
  \mu' = \mu + \phi(e, p), \quad \rho' = \rho + T\phi(0, v_x) \quad \text{and} \quad 
  \sigma' = \sigma + T\phi(0, w_x).
\]
Substituting this into (\ref{pushforwardcushman}), we get 
\begin{align*}
  \left<\rho', \eta\right> & = \left<\rho, \eta\right> + \left<T\phi(0, v_x),
    \eta\right> \\
   & = \left<\rho, \eta\right> + \mathbf{d} \phi_\eta (0, v_x),
\end{align*}
and similar for $\left<\sigma', \xi\right>$.  For the term involving $\mu'$,
we obtain
\begin{align*}
  \left<\mu', [\xi, \eta]\right> & = \left<\mu, [\xi, \eta]\right> + \left<\phi(e,
    x), [\xi, \eta]\right> \\
   & = \left<\mu, [\xi, \eta]\right> - \Sigma(\xi, \eta) + B(e, x)(\xi_{H
     \times P}(e, x), \eta_{H
     \times P}(e, x) ), 
\end{align*}
where we have used the definition of $\Sigma$.  Hence, 
\begin{multline*}
  (\mathcal{S} _\ast \omega_{\mathrm{body}})(g, \mu, x)((g \cdot \xi, \rho, v_x),
  (g\cdot \eta, \sigma,w_x)) = \\
  \omega_{\mathrm{body}}(g, \mu, x)((g \cdot \xi, \rho, v_x),
  (g\cdot \eta, \sigma,w_x)) - \Sigma(\xi, \eta) \\
  - \mathbf{d} \phi_\eta (0, v_x) + \mathbf{d} \phi_\xi(0, w_x)  +
  B(e, x)(\xi_{H \times P}(e, x), \eta_{H \times P}(e, x) ).
\end{multline*}
The last three terms are just the interaction terms.  Hence, by
substituting this expression into the expression for
$\mathcal{S} _\ast\Omega_{\mathrm{body}}$, these terms cancel out, leaving
(\ref{pfomega}).
\end{proof}

In other words, by applying $\mathcal{S} $ we effectively get rid of the
interaction terms in the symplectic form.

\subsection{The Reduced Poisson Structures}

\paragraph{The Poisson Structure on $H \times \mathfrak{h} ^{\ast}  \times P$.}
Now that we have established the different symplectic structures on $H \times
\mathfrak{h} ^{\ast}  \times P$, we can also find an explicit form for the
associated Poisson structures.  

\begin{proposition} \label{prop:poisson}
The Poisson structure on $H \times \mathfrak{h} ^{\ast}  \times P$ associated to
the symplectic structure (\ref{bigsympstruct}) is given by
\begin{align} 
  \{F, K\}_{\mathrm{int}}^0 & = - {\frac{\delta F}{\delta \mu}} \star {\frac{\delta K}{\delta \mu}} - \{F_{|P}, K_{|P}\}_P +
  B\left(\left({\frac{\delta F}{\delta \mu}}\right)_{H \times P}, \left({\frac{\delta K}{\delta \mu}}\right)_{H \times
      P}\right) \nonumber \\
  & - {\frac{\delta F}{\delta \mu}} \lhd K + {\frac{\delta K}{\delta \mu}} \lhd F \label{eq:poisson} \\
  & + \left<\mathbf{d}_g F, g\cdot {\frac{\delta K}{\delta \mu}}\right> - \left<\mathbf{d}_g K, g\cdot {\frac{\delta F}{\delta \mu}}\right> \nonumber
\end{align}
for functions $F = F(g, \mu, x)$ and $K = K(g, \mu, x)$ on $H \times
\mathfrak{h} ^{\ast}  \times P$.
\end{proposition}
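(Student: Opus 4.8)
The plan is to read the bracket off directly from the symplectic form $\Omega_{\mathrm{body}} = \Lambda_\ast\Omega$ computed in Lemma~\ref{lemma:sympbody}, using that $\{F,K\}_{\mathrm{int}}^0 = \Omega_{\mathrm{body}}(X_F,X_K)$, where the Hamiltonian vector field is characterized by $\mathbf{i}_{X_F}\Omega_{\mathrm{body}} = \mathbf{d} F$. I would write a general tangent vector at $(g,\mu,x)$ as a triple $(g\cdot\eta,\sigma,w_x)\in T_gH\times\mathfrak{h}^\ast\times T_xP$ and set $X_F = (g\cdot\xi_F,\rho_F,v_F)$. The block form of \eqref{bigsympstruct} lets one solve $\mathbf{i}_{X_F}\Omega_{\mathrm{body}} = \mathbf{d} F$ one component at a time: pairing against an arbitrary $\sigma$ isolates $\langle\sigma,\xi_F\rangle = \langle\sigma,\delta F/\delta\mu\rangle$, forcing $\xi_F = \delta F/\delta\mu$; pairing against an arbitrary $w_x$ and using $B_{|P} = -\omega_P$ gives $\mathbf{i}_{v_F}\omega_P = \mathbf{d}_x\!\left(F + \phi_{\xi_F}\right)$, so that $v_F$ is the $\omega_P$-Hamiltonian vector field $X^P_{(F+\phi_{\xi_F})_{|P}}$; and pairing against $g\cdot\eta$ then determines $\rho_F$ in terms of $\mathbf{d}_g F$, of $\mathrm{ad}^\ast_{\xi_F}\mu$, of the curvature pairing $B(e,x)((\xi_F)_{H\times P},\eta_{H\times P})$, and of the mixed term $\langle\mathbf{d}\phi_\eta(e,x),(0,v_F)\rangle$.

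With $X_F$ and $X_K$ in hand, the second step is to substitute them back into \eqref{bigsympstruct} and sort the result by type. The three purely canonical terms $-\langle\rho_F,\xi_K\rangle + \langle\rho_K,\xi_F\rangle + \langle\mu,[\xi_F,\xi_K]\rangle$ reassemble, once the expressions for $\rho_F,\rho_K$ are inserted, into the left-trivialized (body) bracket on $T^\ast H$, producing the group-derivative terms $\langle\mathbf{d}_g F, g\cdot\delta K/\delta\mu\rangle - \langle\mathbf{d}_g K, g\cdot\delta F/\delta\mu\rangle$ on the last line of \eqref{eq:poisson}; this part is exactly Cushman's body-coordinate computation. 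The two $B$-contributions evaluated on the $H\times P$-generators combine, using the antisymmetry of $B$, into the single curvature term $+B\!\left((\delta F/\delta\mu)_{H\times P},(\delta K/\delta\mu)_{H\times P}\right)$.

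The third step is the $P$-part. Writing $v_F = X^P_{F_{|P}} + X^P_{\phi_{\xi_F|P}}$ and similarly for $v_K$, the term $-B(e,x)((0,v_F),(0,v_K)) = \omega_P(v_F,v_K)$ expands into four pieces; using $\omega_P(X^P_a,X^P_b) = \pm\{a,b\}_P$ together with the definitions $\xi\star\eta := \{\phi_{\xi|P},\phi_{\eta|P}\}_P$ and $\xi\lhd F := \{\phi_{\xi|P},F\}_P$, these four pieces become $\{F_{|P},K_{|P}\}_P$, the $\star$-term, and the two $\lhd$-terms, with the signs displayed in \eqref{eq:poisson}. The delicate feature, which I expect to be the main obstacle, is that the mixed terms $\pm\langle\mathbf{d}\phi_{\xi}(e,x),(0,v)\rangle$ arise twice — once in solving for $\rho_F,\rho_K$ and once directly from the last line of \eqref{bigsympstruct} — and must be shown to cancel in their entirety; getting every sign right here (including the $\pm$ ambiguity in $\omega_P(X^P_a,X^P_b)$) is where essentially all of the bookkeeping lives. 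I would handle this by recording the bilinear form $(\xi,v)\mapsto\langle\mathbf{d}\phi_\xi(e,x),(0,v)\rangle$ once and checking that the $F$- and $K$-contributions enter with opposite signs.

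A cleaner but equivalent route is to exploit Lemma~\ref{lemma:pf}: since $\mathcal{S}$ pushes $\Omega_{\mathrm{body}}$ forward to the interaction-free form \eqref{pfomega}, it is a symplectomorphism and hence a Poisson map, so one may first compute the (simple) bracket associated to $\mathcal{S}_\ast\Omega_{\mathrm{body}}$ — a sum of the body bracket on $T^\ast H$, the $P$-bracket, and the cocycle $\Sigma$ — and then transport it back through $\mathcal{S}^{-1}(g,\mu,x) = (g,\mu+\phi(e,x),x)$. Applying the chain rule to the shifted $\mu$-argument regenerates precisely the $\star$, $\lhd$, and $B$ interaction terms, with $\Sigma$ recombining against them. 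Either way the proposition reduces to a finite, if lengthy, verification; I would present the direct computation, since it exhibits transparently the source of each term in \eqref{eq:poisson}.
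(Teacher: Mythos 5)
Your proposal is correct and follows essentially the same route as the paper: the paper likewise solves $\mathbf{i}_{X_K}\Omega_{{\mathrm{body}}} = \mathbf{d} K$ blockwise from \eqref{bigsympstruct}, obtaining $\xi = \delta K/\delta\mu$, $v_x$ as the $\omega_P$-Hamiltonian vector field of $(K+\phi_\xi)_{|P}$, and $\rho$ in terms of $\mathbf{d}_g K$, the curvature $B$ and the $\mathbf{d}\phi$-terms, and then sorts the result into the $\star$, $\lhd$, curvature and group-derivative pieces. The only difference is one of economy: the paper evaluates $\{F,K\}_{\mathrm{int}}^0 = X_K(F) = \mathbf{d} F(X_K)$, so only one Hamiltonian vector field is ever computed, whereas you substitute both $X_F$ and $X_K$ back into the seven-term form --- equivalent bookkeeping, with the mixed $\mathbf{d}\phi$-terms cancelling exactly as you anticipate.
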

\begin{proof}
  The Poisson bracket of two functions $F, K$ on $H \times
  \mathfrak{h} ^{\ast}  \times P$ is defined as $\{F, K\}^0_{\mathrm{int}} = X_K(F)$,
  where $X_K$ is the Hamiltonian vector field associated to $K$,
  defined by $\mathbf{i}_{X_K} \Omega_{\mathrm{body}} = \mathbf{d} K$.
  
  Let $(g, \mu, x)$ be an element of $H \times \mathfrak{h} ^{\ast}  \times P$
  and write $X_K(g, \mu, x) \in T_g G \times \mathfrak{h} ^{\ast}  \times T_x P$
  as $X_K(g, \mu, x) = (g \cdot \xi, \rho, v_x)$, where $\xi \in
  \mathfrak{h} $, $\rho \in \mathfrak{h} ^{\ast} $, and $v_x \in T_x P$.  Note also
  that $\mathbf{d} K$ is of the form 
  \[
    \mathbf{d} K = \left(\mathbf{d}_g K, \frac{\delta K}{\delta \mu}, \mathbf{d}_x K\right),  
  \]
  where $\mathbf{d}_x K$ denotes the differential of $K(g, \mu, x)$ keeping
  $g$ and $\mu$ fixed, and similar for $\mathbf{d}_g K$.

  A long but straightforward calculation shows that
  \begin{equation} \label{components}
    \xi = \frac{\delta K}{\delta \mu} \quad \text{and} \quad  
    v_x = - (\mathbf{d}_x K + \mathbf{d}_x \phi_\xi)^\sharp,
  \end{equation}
  where the map $\sharp : T^{\ast}  P \rightarrow TP$ is given by
  $\sharp(\alpha_x) = w_x$ iff $\mathbf{i}_{w_x} \omega_P = \alpha_x$, while 
  \[
    \rho = - \mathbf{i}_{w} B \circ T_e
    \hat{\sigma}_{(e, x)} - T^{\ast}  L_g(\mathbf{d}_g K), \quad \text{where}
    \quad w = \xi_{H \times P}(e, x) + v_x, 
  \]
  and $v_x$ and $\xi$ are given by (\ref{components}).  Here,
  $\hat{\sigma}_{(g, x)}: H \rightarrow H \times P$ is the map defined
  by $\hat{\sigma}_{(g, x)}(h) = (hg, h\cdot x)$, for all $g, h \in H$
  and $x \in P$.  Note that
  \[
    T_e \hat{\sigma}_{(g, x)}(\xi) = \xi_{H \times P}(g, x).
  \]

  The Poisson bracket is then given by 
  \[
     \{F, K\}^0_{\mathrm{int}} = X_K(F) 
      = \left<g\cdot\xi, \mathbf{d}_g F\right>
      + \left<v_x, \mathbf{d}_x F\right>
      + \left<\rho, \frac{\delta F}{\delta \mu}\right>.
  \]
  The first term is equal to 
  \[
  \left<g\cdot\xi, \mathbf{d}_g F\right> = \left<\frac{\delta K}{\delta \mu},
  T^{\ast}  L_g(\mathbf{d}_g F)\right>, 
  \]
  while for the second term, we have 
  \begin{align*}
    \left<v_x, \mathbf{d}_x F\right> & = - \left<(\mathbf{d}_x K + \mathbf{d}_x \phi_\xi)^\sharp,
      \mathbf{d}_x F\right> \\
    & = -\{F_{|P}, K_{|P}\}_P + \frac{\delta K}{\delta \mu} \lhd F, 
  \end{align*}
  using the definition of the induced Poisson structure in terms of
  the original symplectic structure: $\{F_{|P}, K_{|P}\}_P = (\mathbf{d}_x
  K)^\sharp \cdot \mathbf{d}_x F$.  Writing out the last term requires a
  little more work.  Denote
  \[
    \eta := \frac{\delta F}{\delta \mu};
  \]
  the last term then becomes
  \[
    \left<\rho, \frac{\delta F}{\delta \mu}\right> 
      = -\mathbf{i}_w B\left( \eta_{H \times P} \right) - \left<T^{\ast} 
        L_g(\mathbf{d}_g K), \eta\right>.
  \]
  The first term on the right-hand side can be rewritten as 
  \begin{align*}
    - \mathbf{i}_w B\left( \eta_{H \times P} \right) & = \mathbf{d} \phi_\eta(w) \\
    & = \mathbf{d} \phi_\eta(\xi_{H \times P}) + \mathbf{d}\phi_\eta (v_x) \\
    & = B(\eta_{H \times P}, \xi_{H \times P}) - \mathbf{d}_x \phi_\eta \cdot
    (\mathbf{d}_xK)^\sharp
    - \mathbf{d}_x \phi_\eta \cdot (\mathbf{d}_x \phi_\xi)^\sharp \\
    & = B(\eta_{H \times P}, \xi_{H \times P}) - \eta \lhd K - \eta
    \star \xi.
  \end{align*}
  Putting everything together, we obtain (\ref{eq:poisson}).
\end{proof}

\paragraph{The Reduced Poisson Structure with Interaction Terms.}
Having established the explicit form for the interaction Poisson structure $\{\cdot, \cdot\}_{\mathrm{int}}^0$ on $H
\times \mathfrak{h} ^{\ast}  \times P$, we can at last show that the reduced Poisson structure 
 on $\mathfrak{h} ^{\ast}  \times P$ is given by expression 
 (\ref{redpoisson}) in 
 theorem~\ref{thm:bmr}.

\begin{proof}[Proof of theorem~\ref{thm:bmr}]
According to the Poisson reduction theorem (see \cite{OrRa04}), the reduced Poisson structure
$\{\cdot, \cdot\}_{\mathrm{int}}$ on $\mathfrak{h} ^{\ast}  \times P$ is defined by the
prescription
\[
\{f, h\}_{\mathrm{int}} \circ \pi = \{f \circ \pi, h \circ
\pi\}_{\mathrm{int}}^0, 
\]
where $\{\cdot, \cdot\}_{\mathrm{int}}^0$ is the Poisson structure on $H \times
\mathfrak{h} ^{\ast}  \times P$ described in proposition~\ref{prop:poisson} and
$\pi: H \times \mathfrak{h} ^{\ast}  \times P \rightarrow \mathfrak{h} ^{\ast}  \times P$
forgets the first factor: $\pi(g, \mu, x) = (\mu, x)$.

In other words, the reduced Poisson structure is the original Poisson
structure (\ref{eq:poisson}) evaluated on functions $f = f(\mu, x)$
and $k = k(\mu, x)$ which do not depend on $g \in H$.  This yields the expression (\ref{redpoisson}) for 
$\{\cdot, \cdot\}_{\mathrm{int}}$.
\end{proof}

\paragraph{Shifting Away the Interaction Terms.}  The symplectic form
$\Omega_{\mathrm{body}}$ and the Poisson structure $\{\cdot, \cdot\}_{\mathrm{int}}$ are
characterized by the presence of \textbf{\emph{interaction terms}}.  By this,
we mean that the symplectic structure on $H \times \mathfrak{h} ^{\ast}  \times
P$ is not simply the sum of the symplectic form $\omega_{\mathrm{body}}$ on $H
\times \mathfrak{h} ^{\ast} $ and the symplectic form $\omega_P$ on $P$, but
contains terms involving the curvature $B$ and the $\Bg$-potentials
$\phi_\xi$, and similar for $\{\cdot, \cdot\}_{\mathrm{int}}$.
As shown in lemma~\ref{lemma:pf}, one can remove the interaction terms from 
$\Omega_{\mathrm{body}}$ at the expense of introducing a cocycle $\Sigma$ by using the shift map $\hat{\mathcal{S}}$.  The resulting symplectic structure then induces a reduced Poisson structure on $\mathfrak{h}^\ast \times P$, which is just the natural Poisson structure of 
definition~\ref{def:natpoisson}.

\begin{theorem} \label{thm:poissonsimple} The symplectic form
  $\mathcal{S} _\ast \Omega_{\mathrm{body}}$ on $H \times \mathfrak{h} ^{\ast}  \times P$ is
  $H$-invariant.  The Poisson structure on $\mathfrak{h} ^{\ast}  \times P$
  induced by $\mathcal{S} _\ast \Omega_{\mathrm{body}}$ is the natural Poisson structure $\{\cdot, \cdot\}_\Sigma$ of definition~\ref{def:natpoisson}.
\end{theorem}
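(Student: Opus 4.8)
The plan is to prove the two assertions separately, using the explicit formula for $\mathcal{S}_\ast\Omega_{\mathrm{body}}$ from lemma~\ref{lemma:pf} together with the same Poisson reduction theorem already invoked in the proof of theorem~\ref{thm:bmr}. The whole point of lemma~\ref{lemma:pf} is that the shifted form (\ref{pfomega}) has lost all interaction terms, so both statements become essentially bookkeeping once the conceptual skeleton is in place.

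For the $H$-invariance I would argue conceptually. The form $\Omega = \omega_{\mathrm{can}} - B$ on $T^\ast H \times P$ is $H$-invariant: the diagonal action (\ref{diagaction}) is the cotangent lift of left translation on the first factor, which preserves $\omega_{\mathrm{can}}$, together with the given action on $P$, under which $B$ is invariant by hypothesis. Since $\Lambda$ intertwines this diagonal action with left translation on $H\times\mathfrak{h}^\ast\times P$, the pushforward $\Omega_{\mathrm{body}} = \Lambda_\ast\Omega$ is invariant under $h\cdot(g,\mu,x) = (hg,\mu,x)$. Finally, the shift map $\mathcal{S}(g,\mu,x) = (g,\mu-\phi(e,x),x)$ is $H$-equivariant, because $\phi(e,x)$ is independent of $g$ and the action touches neither $\mu$ nor $x$; hence $\mathcal{S}_\ast\Omega_{\mathrm{body}}$ is again $H$-invariant. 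This is also transparent directly from (\ref{pfomega}), whose right-hand side is written purely in terms of $\mu$, $x$ and the body-frame data $\xi,\eta,\rho,\sigma,v_x,w_x$, none of which changes under left translation in the $H$-slot.

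For the reduced bracket I would apply Poisson reduction exactly as in the proof of theorem~\ref{thm:bmr}: the $H$-invariant functions are precisely the $f = f(\mu,x)$ independent of $g$, and their reduced bracket is computed from the Poisson structure attached to $\mathcal{S}_\ast\Omega_{\mathrm{body}}$. The key simplification is that, unlike (\ref{bigsympstruct}), the form (\ref{pfomega}) has no interaction terms: the only piece involving the $P$-directions is $-\omega_P(v_x,w_x)$, which is completely decoupled from the $H$- and $\mathfrak{h}^\ast$-directions, while the coupling between those two is governed by $\omega_{\mathrm{body}}$ corrected by the cocycle $\Sigma$. Hence the associated Poisson bivector is block diagonal and the reduction splits factor by factor: the $P$-block with form $-\omega_P$ contributes $-\{f_{|P},k_{|P}\}_P$, whereas the $H\times\mathfrak{h}^\ast$-block, carrying $\omega_{\mathrm{body}}$ shifted by $-\Sigma$, reduces under left translation to the affine Lie--Poisson bracket $\{f_{|\mathfrak{h}^\ast},k_{|\mathfrak{h}^\ast}\}_{\mathrm{l.p}} - \Sigma(\delta f/\delta\mu,\delta k/\delta\mu)$. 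This last step is just the computation of proposition~\ref{prop:poisson} with the interaction terms stripped away, or equivalently an instance of magnetic Lie--Poisson reduction in the sense of \cite{MarsdenHamRed}. Adding the two contributions reproduces exactly the natural bracket $\{\cdot,\cdot\}_\Sigma$ of definition~\ref{def:natpoisson}.

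The main obstacle will be the sign bookkeeping and the correct handling of $\Sigma$. One must fix the orientation conventions so that the $\omega_{\mathrm{body}}$-block produces $+\{\cdot,\cdot\}_{\mathrm{l.p}}$ and the $-\omega_P$-block produces $-\{\cdot,\cdot\}_P$, and one must remember that the $\Sigma(\xi,\eta)$ appearing in (\ref{pfomega}) is the value at the identity of the cocycle (\ref{twococycle}), so that it descends to a well-defined function on $\mathfrak{h}^\ast\times P$ once $\xi,\eta$ are replaced by $\delta f/\delta\mu,\delta k/\delta\mu$. Beyond these conventions the argument is routine, the decoupled form of (\ref{pfomega}) doing all the real work. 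Combined with theorem~\ref{thm:bmr} and the $H$-equivariance of $\mathcal{S}$, this also yields theorem~\ref{thm:shift}, the induced map on the quotient being precisely the shift $\hat{\mathcal{S}}$ of (\ref{hatshift}).
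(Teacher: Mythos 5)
Your proposal is correct and takes essentially the same route as the paper: the paper's own (very terse) proof also reads the result off from the decoupled expression of lemma~\ref{lemma:pf}, noting that $\omega_{\mathrm{body}}$ induces $\{\cdot,\cdot\}_{\mathrm{l.p}}$, that $-\omega_P$ induces $-\{\cdot,\cdot\}_P$, and that the cocycle term (evaluated at the identity, hence a function of $x$ alone) descends, with the $H$-invariance left implicit. Your write-up simply makes explicit the equivariance of $\Lambda$ and $\mathcal{S}$ and the block-diagonal splitting of the Hamiltonian vector fields that the paper treats as immediate.
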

\begin{proof}
This is an immediate consequence of the fact that the Lie-Poisson
structure $\{\cdot, \cdot\}_{\mathrm{l.p}}$ is induced by $\omega_{\mathrm{body}}$, while
$\{\cdot, \cdot\}_P$ is induced by $\omega_P$.  
\end{proof}

The map $\mathcal{S}$ shifts away the interacting terms
in the symplectic form, and induces a map on the Lie algebra level.  It is easy to see that this map is just the map $\hat{\mathcal{S}}$ defined in (\ref{hatshift}).  Using this observation, the proof of theorem~\ref{thm:shift} now follows immediately.

\begin{proof}[Proof of theorem~\ref{thm:shift}]
  The map $\hat{\mathcal{S} }$ is the quotient space map induced by $\mathcal{S} $,
  which is a symplectic map taking the symplectic form
  (\ref{bigsympstruct}) with interaction terms into the symplectic
  form (\ref{pfomega}) without interaction terms.  Hence, the induced
  map is a Poisson map with respect to the induced Poisson structures,
  \emph{i.e.} (\ref{poissonmap}) holds.
\end{proof}

%\bibliographystyle{marsden}
%\bibliography{biblio}

\begin{thebibliography}{44}
\expandafter\ifx\csname natexlab\endcsname\relax\def\natexlab#1{#1}\fi

\bibitem[Abraham and Marsden(1978)]{AbMa78}
Abraham, R. and J.~E. Marsden [1978], {\em Foundations of mechanics}.
\newblock Benjamin/Cummings Publishing Co. Inc. Advanced Book Program, Reading,
  Mass.
\newblock Second edition, revised and enlarged, With the assistance of Tudor
  Ra\c tiu and Richard Cushman.

\bibitem[Adams and Ratiu(1988)]{AdRa1988}
Adams, M. and T.~Ratiu [1988], The three-point vortex problem: commutative and
  noncommutative integrability.
\newblock In {\em Hamiltonian dynamical systems (Boulder, CO, 1987)}, volume~81
  of {\em Contemp. Math.}, pages 245--257. Amer. Math. Soc., Providence, RI.

\bibitem[Aref(2007)]{Aref2007}
Aref, H. [2007], Point vortex dynamics: a classical mathematics playground,
  {\em J. Math. Phys.} \textbf{48}, 065401, 23.

\bibitem[Arnold(1966)]{Ar66}
Arnold, V. [1966], Sur la g\'eom\'etrie diff\'erentielle des groupes de {L}ie
  de dimension infinie et ses applications \`a l'hydrodynamique des fluides
  parfaits, {\em Ann. Inst. Fourier (Grenoble)} \textbf{16}, 319--361.

\bibitem[Arnold and Khesin(1998)]{ArnoldKhesin}
Arnold, V.~I. and B.~A. Khesin [1998], {\em Topological methods in
  hydrodynamics}, volume 125 of {\em Applied Mathematical Sciences}.
\newblock Springer-Verlag, New York.

\bibitem[Borisov and Mamaev(2003)]{BoMa03}
Borisov, A.~V. and I.~S. Mamaev [2003], An integrability of the problem on
  motion of cylinder and vortex in the ideal fluid, {\em Regul. Chaotic Dyn.}
  \textbf{8}, 163--166.

\bibitem[Borisov et~al.(2003)Borisov, Mamaev, and Ramodanov]{BoMaRa2003}
Borisov, A.~V., I.~S. Mamaev, and S.~M. Ramodanov [2003], Motion of a circular
  cylinder and {$n$} point vortices in a perfect fluid, {\em Regul. Chaotic
  Dyn.} \textbf{8}, 449--462.

\bibitem[Borisov et~al.(2007)Borisov, Mamaev, and Ramodanov]{BoMaRa2007}
Borisov, A.~V., I.~S. Mamaev, and S.~M. Ramodanov [2007], Dynamic interaction
  of point vortices and a two-dimensional cylinder, {\em J. Math. Phys.}
  \textbf{48}, 065403, 9.

\bibitem[Cendra et~al.(2004)Cendra, Marsden, and Ratiu]{CeMaRa2004}
Cendra, H., J.~Marsden, and T.~S. Ratiu [2004], Cocycles, compatibility, and
  {P}oisson brackets for complex fluids.
\newblock In {\em Advances in multifield theories for continua with
  substructure}, Model. Simul. Sci. Eng. Technol., pages 51--73. Birkh\"auser
  Boston, Boston, MA.

\bibitem[Cushman and Bates(1997)]{CuBa97}
Cushman, R.~H. and L.~M. Bates [1997], {\em Global aspects of classical
  integrable systems}.
\newblock Birkh\"auser Verlag, Basel.

\bibitem[Ebin and Marsden(1970)]{EbinMarsden70}
Ebin, D.~G. and J.~E. Marsden [1970], Groups of diffeomorphisms and the notion
  of an incompressible fluid, {\em Ann. Math. (2)} \textbf{92}, 102--163.

\bibitem[Gay-Balmaz and Ratiu(2008)]{GaRa2008}
Gay-Balmaz, F. and T.~S. Ratiu [2008], Affine Lie-Poisson reduction, Yang-Mills
  magnetohydrodynamics, and superfluids, {\em Journal of Physics A:
  Mathematical and Theoretical} \textbf{41}, 344007 (24pp).

\bibitem[Guillemin and Sternberg(1984)]{GuSt1984}
Guillemin, V. and S.~Sternberg [1984], {\em Symplectic techniques in physics}.
\newblock Cambridge University Press, Cambridge.

\bibitem[Kanso et~al.(2005)Kanso, Marsden, Rowley, and
  Melli-Huber]{KMlocomotion}
Kanso, E., J.~E. Marsden, C.~W. Rowley, and J.~B. Melli-Huber [2005],
  Locomotion of articulated bodies in a perfect fluid, {\em J. Nonlinear Sci.}
  \textbf{15}, 255--289.

\bibitem[Kanso and Oskouei(2008)]{KaOs08}
Kanso, E. and B.~Oskouei [2008], Stability of a Coupled Body-Vortex System,
  {\em J. Fluid Mech.} \textbf{800}, 77--94.

\bibitem[Kelly(1998)]{Ke1998}
Kelly, S.~D. [1998], {\em The mechanics and control of robotic locomotion with
  applications to aquatic vehicles}, PhD thesis, California Institute of
  Technology.

\bibitem[Khesin and Lee(2008)]{KhLe08}
Khesin, B. and P.~Lee [2008], {\em A nonholonomic {M}oser theorem and optimal
  transport}, preprint, arXiv:0802.1551, 2008.

\bibitem[Kirchhoff(1877)]{Ki1877}
Kirchhoff, G.~R. [1877], {\em Vorlesunger {\"u}ber {M}athematische {P}hysik,
  {B}and {I}, {M}echanik}.
\newblock Teubner, Leipzig.

\bibitem[Kobayashi and Nomizu(1963)]{KN1}
Kobayashi, S. and K.~Nomizu [1963], {\em Foundations of differential geometry.
  {V}ol {I}}.
\newblock Interscience Publishers, John Wiley \& Sons.

\bibitem[Krishnaprasad and Marsden(1987)]{KrMa86}
Krishnaprasad, P.~S. and J.~E. Marsden [1987], Hamiltonian structures and
  stability for rigid bodies with flexible attachments, {\em Arch. Rational
  Mech. Anal.} \textbf{98}, 71--93.

\bibitem[Lamb(1945)]{lamb}
Lamb, H. [1945], {\em Hydrodynamics}.
\newblock Dover Publications.
\newblock Reprint of the 1932 Cambridge University Press edition.

\bibitem[Leonard(1997)]{Leonard1997}
Leonard, N.~E. [1997], Stability of a bottom-heavy underwater vehicle, {\em
  Automatica J. IFAC} \textbf{33}, 331--346.

\bibitem[Lewis et~al.(1986)Lewis, Marsden, Montgomery, and Ratiu]{freeboundary}
Lewis, D., J.~Marsden, R.~Montgomery, and T.~Ratiu [1986], The {H}amiltonian
  structure for dynamic free boundary problems, {\em Phys. D} \textbf{18},
  391--404.

\bibitem[Lin(1941)]{Lin43a}
Lin, C.~C. [1941], On the motion of vortices in two dimensions. {I}.
  {E}xistence of the {K}irchhoff-{R}outh function, {\em Proc. Nat. Acad. Sci.
  U. S. A.} \textbf{27}, 570--575.

\bibitem[Marsden and Weinstein(1983)]{mw1983}
Marsden, J. and A.~Weinstein [1983], Coadjoint orbits, vortices, and {C}lebsch
  variables for incompressible fluids, {\em Phys. D} \textbf{7}, 305--323.

\bibitem[Marsden et~al.(2007)Marsden, Misio{\l}ek, Ortega, Perlmutter, and
  Ratiu]{MarsdenHamRed}
Marsden, J.~E., G.~Misio{\l}ek, J.-P. Ortega, M.~Perlmutter, and T.~S. Ratiu
  [2007], {\em Hamiltonian reduction by stages}, volume 1913 of {\em Lecture
  Notes in Mathematics}.
\newblock Springer, Berlin.

\bibitem[Marsden and Ratiu(1994)]{MarsdenRatiu}
Marsden, J.~E. and T.~S. Ratiu [1994], {\em Introduction to mechanics and
  symmetry}, volume~17 of {\em Texts in Applied Mathematics}.
\newblock Springer-Verlag, New York.

\bibitem[Marsden et~al.(2000)Marsden, Ratiu, and Scheurle]{Routh00}
Marsden, J.~E., T.~S. Ratiu, and J.~Scheurle [2000], Reduction theory and the
  {L}agrange-{R}outh equations, {\em J. Math. Phys.} \textbf{41}, 3379--3429.

\bibitem[Milne-Thomson(1968)]{MiTh1968}
Milne-Thomson, L. [1968], {\em Theoretical hydrodynamics.}
\newblock {Fifth edition, revised and enlarged. London: MacMillan and Co. Ltd.
  XXIV, 743 p. }.

\bibitem[Montgomery(1986)]{MontgomeryThesis}
Montgomery, R. [1986], {\em The bundle picture in mechanics}, PhD thesis, UC
  Berkeley.

\bibitem[Montgomery(2002)]{montgomery}
Montgomery, R. [2002], {\em A tour of subriemannian geometries, their geodesics
  and applications}, volume~91 of {\em Mathematical Surveys and Monographs}.
\newblock American Mathematical Society, Providence, RI.

\bibitem[Newton(2001)]{Ne01}
Newton, P.~K. [2001], {\em The {$N$}-vortex problem. {A}nalytical techniques},
  volume 145 of {\em Applied Mathematical Sciences}.
\newblock Springer-Verlag, New York.

\bibitem[Ortega and Ratiu(2004)]{OrRa04}
Ortega, J.-P. and T.~S. Ratiu [2004], {\em Momentum maps and {H}amiltonian
  reduction}, volume 222 of {\em Progress in Mathematics}.
\newblock Birkh\"auser Boston Inc., Boston, MA.

\bibitem[Patrick et~al.(2008)Patrick, Roberts, and Wulff]{PaRoWu2008}
Patrick, G.~W., M.~Roberts, and C.~Wulff [2008], Stability transitions for
  axisymmetric relative equilibria of {E}uclidean symmetric {H}amiltonian
  systems, {\em Nonlinearity} \textbf{21}, 325--352.

\bibitem[Radford(2003)]{Ra2003}
Radford, J.~E. [2003], {\em Symmetry, reduction and swimming in a perfect
  fluid}, PhD thesis, California Institute of Technology.

\bibitem[Saffman(1992)]{Sa1992}
Saffman, P.~G. [1992], {\em Vortex dynamics}.
\newblock Cambridge Monographs on Mechanics and Applied Mathematics. Cambridge
  University Press, New York.

\bibitem[Shashikanth(2005)]{Sh2005}
Shashikanth, B.~N. [2005], Poisson brackets for the dynamically interacting
  system of a 2{D} rigid cylinder and {$N$} point vortices: the case of
  arbitrary smooth cylinder shapes, {\em Regul. Chaotic Dyn.} \textbf{10},
  1--14.

\bibitem[Shashikanth et~al.(2002)Shashikanth, Marsden, Burdick, and
  Kelly]{cylvortices}
Shashikanth, B.~N., J.~E. Marsden, J.~W. Burdick, and S.~D. Kelly [2002], The
  {H}amiltonian structure of a two-dimensional rigid circular cylinder
  interacting dynamically with {$N$} point vortices, {\em Phys. Fluids}
  \textbf{14}, 1214--1227.

\bibitem[Shashikanth et~al.(2008)Shashikanth, Sheshmani, Kelly, and
  Marsden]{ShShKeMa08}
Shashikanth, B.~N., A.~Sheshmani, S.~D. Kelly, and J.~E. Marsden [2008],
  Hamiltonian structure for a neutrally buoyant rigid body interacting with {N}
  vortex rings of arbitrary shape: the case of arbitrary smooth body shape,
  {\em Theor. Comput. Fluid Dyn.} \textbf{22}, 37--64.

\bibitem[Sternberg(1977)]{Sternberg1977}
Sternberg, S. [1977], Minimal coupling and the symplectic mechanics of a
  classical particle in the presence of a {Y}ang-{M}ills field, {\em Proc. Nat.
  Acad. Sci. U.S.A.} \textbf{74}, 5253--5254.

\bibitem[Vaisman(1994)]{Vaisman}
Vaisman, I. [1994], {\em Lectures on the geometry of {P}oisson manifolds},
  volume 118 of {\em Progress in Mathematics}.
\newblock Birkh\"auser Verlag, Basel.

\bibitem[Vankerschaver et~al.(2008)Vankerschaver, Kanso, and Marsden]{VaKaMa08}
Vankerschaver, J., E.~Kanso, and J.~E. Marsden [2008], {\em The motion of solid
  bodies in potential flow with circulation: a geometric outlook}, To appear in
  the Proceedings of the ASME 2008 Dynamic Systems and Control conference,
  2008.

\bibitem[Weinstein(1978)]{Weinstein1977}
Weinstein, A. [1978], A universal phase space for particles in {Y}ang-{M}ills
  fields, {\em Lett. Math. Phys.} \textbf{2}, 417--420.

\bibitem[Yoshimura and Marsden(2006)]{MarsdenDirac}
Yoshimura, H. and J.~E. Marsden [2006], Dirac structures in {L}agrangian
  mechanics. {I}. {I}mplicit {L}agrangian systems, {\em J. Geom. Phys.}
  \textbf{57}, 133--156.

\end{thebibliography}

% These are the contents of VaKaMa2008.bbl 

\end{document}